\theoremstyle{plain}
\newtheorem{theorem}{Theorem}[section]
\newtheorem{proposition}[theorem]{Proposition}
\newtheorem{lemma}[theorem]{Lemma}
\theoremstyle{remark}
\newtheorem{remark}[theorem]{Remark}
\newtheorem*{notation}{Basic notation and terminology}
\newcommand{\reft}[1]{Theorem \ref{thm:#1}}
\newcommand{\refp}[1]{Proposition \ref{prop:#1}}
\newcommand{\refl}[1]{Lemma \ref{lem:#1}}
\newcommand{\refeq}[1]{Eq. \eqref{eq:#1}}
\newcommand{\refeqs}[2]{Eqs. \eqref{eq:#1} and \eqref{eq:#2}}
\newcommand{\refr}[1]{Remark \ref{rmk:#1}}
\newcommand{\refs}[1]{Section \ref{sec:#1}}
\numberwithin{equation}{section}
\def\alp{\alpha}
\def\eps{\varepsilon}
\def\lam{\lambda}
\def\ome{\omega}
\def\sig{\sigma}
\def\vsig{\varsigma}
\def\vphi{\varphi}
\def\vrho{\varrho}
\def\tet{\vartheta}
\def\zet{\zeta}
\def\Ome{\Omega}
\def\VPhi{\varPhi}
\def\VPsi{\varPsi}
\def\oalp{\overline\alpha}
\def\hchi{\widehat{\chi}}
\def\hxi{\widehat{\xi}}
\def\hzet{\widehat{\zet}}
\def\hphi{\widehat{\phi}}
\def\htet{\widehat{\vartheta}}
\def\hsig{\widehat{\varsigma}}
\def\htau{\widehat{\tau}}
\def\D{\mathfrak{D}}
\def\CA{\mathscr{A}}
\def\CC{\mathscr{C}}
\def\CD{\mathscr{D}}
\def\CI{\mathscr{I}}
\def\CJ{\mathscr{J}}
\def\CK{\mathscr{K}}
\def\CL{\mathscr{L}}
\def\CM{\mathscr{M}}
\def\CX{\mathscr{X}}
\def\CY{\mathscr{Y}}
\def\HCC{\hat{\mathscr{C}}}
\def\HCK{\widehat{\mathscr{K}}}
\def\ed{e_{\CD}(\vphi)}
\def\KD{\mathscr{K}_{\CD}(\vphi)}
\def\HKD{\widehat{\mathscr{K}}_{\CD}(\vphi)}
\def\xd{\xi_{\CD}(\vphi)}
\def\hxd{\hxi_{\CD}(\vphi)}
\def\ld{\lam_{\CD}(\vphi)}
\def\hthd{\htet_{\CD}(\vphi)}
\def\sd{\vsig_{\CD}(\vphi)}
\def\td{\tau_{\CD}(\vphi)}
\def\htd{\htau_{\CD}(\vphi)}
\def\fru{\mathfrak{ut}}
\def\k{\Bbbk}
\def\C{\mathbb{C}}
\def\cx{\mathbb{C}^{\x}}
\def\kx{\k^{\x}}
\def\ax{\CA^{\times}}
\def\ks{\k^{\sigma}}
\def\ksx{(\k^{\sigma})^{\times}}
\def\ic{\CI^{\circ}}
\def\icc{C_{\CI}(\sig)^{\circ}}
\def\jc{\CJ^{\circ}}
\def\jcc{C_{\CJ}(\sig)^{\circ}}
\def\ds{\CD^{\sigma}}
\def\phs{\vphi^{\sigma}}
\def\sset{\subseteq}
\newcommand{\map}[3]{#1 \colon #2 \to #3}
\newcommand{\frob}[2]{\langle #1 , #2 \rangle}
\newcommand{\set}[2]{\{ #1 \colon #2 \}}
\newcommand{\bset}[2]{\big\{ #1 \colon #2 \big\}}
\newcommand{\ger}[1]{\langle #1 \rangle}
\newcommand{\seq}[2]{#1_{1}, \ldots, #1_{#2}}
\def\x{\times}
\def\sn{[[n]]}
\def\inv{^{-1}}
\def\trp{^{\mathtt{T}}}
\def\ort{^{\perp}}
\def\ovl{\overline}
\def\irr{\operatorname{Irr}}
\def\sch{\operatorname{SCh}}
\def\cf{\operatorname{cf}}
\def\scf{\operatorname{scf}}
\def\scl{\operatorname{SCl}}
\def\GL{\operatorname{GL}}
\def\UT{\operatorname{UT}}
\def\fr{\operatorname{Fr}}
\def\and{\quad \text{and} \quad}
\begin{document}


\title[]{A supercharacter theory for involutive algebra groups}

\author[]{Carlos A. M. Andr\'e, Pedro J. Freitas \and Ana Margarida Neto}

\address[C. A. M. Andr\'e \& P. J. Freitas]{Departamento de Matem\'atica \\ Faculdade de Ci\^encias da Uni\-ver\-si\-da\-de de Lis\-boa \\ Cam\-po Grande \\ Edi\-f\'\i \-cio C6 \\ Piso 2 \\ 1749-016 Lisboa \\ Portugal}

\address[A. M. Neto]{Instituto Superior de Economia e Gest\~ao, Universidade de Lisboa, Rua do Quelhas 6, 1200-781 Lisboa, Portugal}

\address[C. A. M. Andr\'e, P. J. Freitas \& A. M. Neto]{Centro de Estruturas Lineares e Combinat\'orias, Instituto Interdisciplicar da Universidade de Lisboa, Av. Prof. Gama Pinto 2, 1649-003 Lisboa, Portugal}

\email{caandre@fc.ul.pt}

\email{pedro@ptmat.fc.ul.pt}

\email{ananeto@iseg.utl.pt}

\subjclass[2010]{20C15, 20D15, 20G40}

\date{\today}

\keywords{}

\thanks{This research was made within the activities of the Centro de Estruturas Lineares e Combinat\'orias and was partially supported by the Funda\c c\~ao para a Ci\^encia e Tecnologia (Portugal).}

\begin{abstract}
If $\CJ$ is a finite-dimensional nilpotent algebra over a finite field $\k$, the algebra group $P = 1+\CJ$ admits a (standard) supercharacter theory as defined in \cite{DiaIsa}. If $\CJ$ is endowed with an involution $\sig$, then $\sig$ naturally defines a group automorphism of $P = 1+\CJ$, and we may consider the fixed point subgroup $C_{P}(\sig) = \set{x\in P}{\sig(x) = x\inv}$. Assuming that $\k$ has odd characteristic $p$, we use the standard supercharacter theory for $P$ to construct a supercharacter theory for $C_{P}(\sig)$. In particular, we obtain a supercharacter theory for the Sylow $p$-subgroups of the finite classical groups of Lie type, and thus extend in a uniform way the construction given by Andr\'e and Neto in \cite{AndNet1,AndNet2} for the special case of the symplectic and orthogonal groups.
\end{abstract}

\maketitle


\section{Introduction} \label{sec:intro}

The notion of a supercharacter theory of a finite group was introduced by P. Diaconis and I.M. Isaacs in \cite{DiaIsa} to generalise the \textit{basic characters} defined by C. Andr\'e in \cite{And1, And2, And3}, and the \textit{transition characters} defined by N. Yan in his PhD thesis \cite{Yan1} (see also \cite{Yan2}). Both basic and transition characters were introduced with the aim of approaching the usual character theory of the finite group $\UT_{n}(\k )$ consisting of $n \x n$ unimodular upper-triangular matrices over a finite field $\k$ of characteristic $p$. (By ``unimodular'', we mean that all diagonal entries are equal to $1$; we will refer to $\UT_{n}(\k)$ simply as a (finite) \textit{unitriangular group}.) The basic idea is to coarsen the usual character theory of a group by replacing irreducible characters with linear combinations of irreducible characters that are constant on a set of clumped conjugacy classes.

Let $G$ be a finite group, and write $\irr(G)$ to denote the set of irreducible characters of $G$. (Throughout the paper, all characters are taken over the field $\C$ of complex numbers.) Let $\CK$ be a partition of $G$, and let $\CX$ be a partition of $\irr(G)$. (Here, and throughout this paper, when we use the word ``partition'', we require that the parts are all non-empty.) For each $X \in \CX$, we define
\begin{equation} \label{eq:sig1}
\sig_{X} = \sum_{\psi \in X} \psi(1) \psi,
\end{equation}
and note that $\sum_{X \in \CX} \sig_{X} = \rho_{G}$, the regular character of $G$. (Recall that $\rho_{G}(g) = 0$ for all $g \in G$, $g \neq 1$, and $\rho_{G}(1) = |G|$.) We recall from \cite{DiaIsa} that the pair $(\CX,\CK)$ is called a \textit{supercharacter theory} for $G$ provided that the following conditions hold.
\begin{enumerate}
\item[(S1)] $|\CX| = |\CK|$.
\item[(S2)] $\{1\} \in \CK$.
\item[(S3)] For each $X \in \CX$, the character $\sig_{X}$ is constant on each member of $\CK$.
\end{enumerate}
As shown in \cite[Lemma~2.1]{DiaIsa} this definition is equivalent to the following (see \cite{AndNet3}). A \textit{supercharacter theory} for a finite group $G$ is a pair $(\CX,\CK)$ where $\CK$ is a partition of $G$, $\CX$ is a collection of characters og $G$, and the following conditions hold.
\begin{enumerate}
\item[(S1')] $|\CX| = |\CK|$.
\item[(S2')] Every irreducible character of $G$ is a constituent of a unique $\chi \in \CX$.
\item[(S3')] Every $\chi \in \CX$ is constant on each member of $\CK$.
\end{enumerate}
We refer to the elements of $\CX$ as the {\it supercharacters} of $G$, and to each $K \in \CK$ as a {\it superclass} of $G$. Regardless of which definition one chooses to work with, it is straightforward to verify that each superclass is a union of conjugacy classes of $G$ and that each of the partitions $\CK$ and $\CX$ determines the other. The only significant difference between these two definitions is that the second approach can yield supercharacters which are multiples of the characters $\sig_{X}$ defined above.

In the literature to date, one of the main uses of supercharacter theory has been to perform computations when a complete character theory is difficult or impossible to determine. For instance, an explicit computation of the irreducible characters and the conjugacy classes of the finite unitriangular groups $\UT_{n}(\k)$ is known to be a ``wild'' problem, but Andr\'e \cite{And1} and Yan \cite{Yan1} have developed an applicable supercharacter theory in this situation. (Andr\'e's original approach works only when the characteristic of $\k$ is large enough, although he extends this to the general case in the later paper \cite{And3}; Yan's construction is slighty different and much more elementary, and it yields the same supercharacter theory as Andr\'e's.) In \cite{DiaIsa}, Diaconis and Isaacs generalise Yan's approach in order to extend the supercharacter theory of $\UT_{n}(\k)$ to a much larger class of $p$-groups introduced by Isaacs in \cite{Isa2}, namely algebra groups over a finite field $\k$ of characteristic $p$. Let $\CA$ be a finite-dimensional associative $\k$-algebra (with identity), and write $\ax$ to denote the unit group of $\CA$ (that is, the group of invertible elements of $\CA$). Following the terminology of \cite{Isa2}, given any nilpotent subalgebra $\CJ$ of $\CA$, the {\it algebra group} based on $\CJ$ is the multiplicative subgroup $1+\CJ$ of $\ax$; notice that a subalgebra of $\CA$ is not required to contain the identity (it is simply a multiplicatively closed vector subspace of $\CA$). We note that $\k\cdot 1 + \CJ$ is a (local) subalgebra of $\CA$, and that $P = 1+\CJ$ is a (normal) Sylow $p$-subgroup of the unit group $(\k\cdot 1 + \CJ)^{\x}$; indeed, $(\k\cdot 1 + \CJ)^{\x}$ is isomorphic to the direct product $\kx \x P$. In fact, it is shown in \cite[Theorem~1.5]{And4} that a finite group is an algebra group over $\k$ if and only if it is a Sylow $p$-subgroup of the unit group of some finite-dimensional $\k$-algebra $\CA$. These algebra groups generalise the finite unitriangular groups over $\k$; in this standard example, we let $\CA = \CM_{n}(\k)$ be the $\k$-algebra consisting of all $n \x n$ matrices with entries in $\k$, so that $\ax = \GL_{n}(\k)$ is the general linear group consisting of all invertible matrices in $\CM_{n}(\k)$. Then, $\UT_{n}(\k) = 1+\CJ$ is the algebra group based on the nilpotent subalgebra $\CJ = \fru_{n}(\k)$ of $\CM_{n}(\k)$ which consists of all strictly upper-triangular matrices.

The primary aim of this paper is to develop a supercharacter theory for another family of $p$-groups which are associated with finite-dimensional nilpotent $\k$-algebras with involution. These $p$-groups include the Sylow $p$-subgroups of the finite classical groups of Lie type, and our construction is motivated by the methods used by C. Andr\'e and A.M. Neto in \cite{AndNet1, AndNet2, AndNet3} for the particular case of the Sylow $p$-subgroups of the symplectic group $Sp_{2m}(\k)$, and the orthogonal groups $O^{+}_{2m}(\k)$ and $O_{2m+1}(\k)$ (see below). We assume that $\k$ is a finite field of odd characteristic $p$, and let $\CA$ is a finite-dimensional $\k$-algebra endowed with an involution. We recall that an {\it involution} on $\CA$ is a map $\map{\sig}{\CA}{\CA}$ satisfying the following conditions:
\begin{enumerate}
\item $\sig(a+b) = \sig(a)+\sig(b)$ for all $a,b \in \CA$;
\item $\sig(ab) = \sig(b)\sig(a)$ for all $a,b \in \CA$;
\item $\sig^{2}(a) = a$ for all $a \in \CA$.
\end{enumerate}
We note that an involution $\sig$ is not required to be $\k$-linear; however, we will assume that the field $\k = \k\cdot 1$ is preserved by $\sig$ \footnote{This essential assumption is missing in the definition given in \cite{And4}; however, it is implicit throughout that paper and all results are valid under this hypothesis. The first author is grateful to I.M. Isaacs for pointing this out to him.}. Then, $\sig$ defines a field automorphism of $\k$ which is either the identity or has order $2$; we say that $\sig$ is {\it of the first kind} if $\sig$ fixes $\k$, and {\it of the second kind} if its restriction $\sig_{\k}$ to $\k$ has order $2$. In any case, we let $\ks = \set{\alp \in \k}{\sig(\alp) = \alp}$ denote the $\sig$-fixed subfield of $\k$, and consider that $\CA$ is a finite dimensional associative $\ks$-algebra. We observe that $\sig$ is of the second kind if and only if the field extension $\ks \sset \k$ has degree $2$, and $\map{\sig}{\k}{\k}$ is the {\it Frobenius map} defined by the mapping $\alp \mapsto \alp^{q}$ where $q = |\ks|$; for simplicity of writing, we will use the bar notation $\oalp = \alp^{q}$ for $\alp \in \k$.

An important example occurs in the case where $\CA = \CM_{n}(F)$ is endowed with the canonical {\it transpose involution} given by the mapping $a \mapsto a\trp$ where $a\trp$ denotes the transpose of $a \in \CM_{n}(F)$. More generally, let $q = |\ks|$, let $\map{\fr_{q}}{\CM_{n}(\k)}{\CM_{n}(\k)}$ be the Frobenius morphism defined by $\fr_{q}(a_{ij}) = (\ovl{a}_{ij}) = ({a_{ij}}^{q})$ for all $(a_{ij}) \in \CM_{n}(\k)$, and set $a^{\ast} = \fr_{q}(a)\trp$ for all $a \in \CM_{n}(\k)$. Then, the mapping $a \mapsto a^{\ast}$ defines an involution on $\CM_{n}(\k)$; notice that, if $\ks = \k$, then $a^{\ast} = a\trp$ for all $a \in \CM_{n}(\k)$. If $\map{\sig}{\CM_{n}(\k)}{\CM_{n}(\k)}$ is an involution of the first kind, then there exists $u \in \GL_{n}(\k)$ with $u\trp = \pm u$ and such that $\sig(a) = u\inv a\trp u$ for all $a \in \CM_{n}(\k)$; moreover, the matrix $u$ is uniquely determined up to a factor in $\kx$. On the other hand, if $\map{\sig}{\CM_{n}(\k)}{\CM_{n}(\k)}$ is an involution of the second kind, then there exists $u \in \GL_{n}(\k)$ with $u^{\ast} = u$ and such that $\sig(a) = u\inv a^{\ast} u$ for all $a \in \CM_{n}(\k)$; moreover, the matrix $u$ is uniquely determined up to a factor in $\ksx$. [The proofs can be found in the book \cite{KMRT} by M.-A. Knus {\it et al.} (see, in particular, Propositions~2.19 and 2.20) where the complete classification of involutions is also given for arbitrary central $\k$-algebras (see Propositions 2.7 and 2.18).] For simplicity, for $u \in \GL_{n}(\k)$ as above, we will denote by $\sig_{u}$ the involution on $\CM_{n}(F)$ given by the mapping $a \mapsto u\inv a^{\ast} u$; as usual, we say that $\sig_{u}$ is {\it symplectic} if $\sig_{u}$ is of the first kind and $u\trp = -u$, {\it orthogonal} if $\sig_{u}$ is of the first kind and $u\trp = u$, and {\it unitary} if $\sig_{u}$ is of the second kind and $u^{\ast} = u$.

In the general situation, consider the unit group $\ax$ of the $\k$-algebra $\CA$. Then, for any involution $\map{\sig}{\CA}{\CA}$, the cyclic group $\ger{\sig}$ acts on $\ax$ as a group of automorphisms by means of $x^{\sig} = \sig(x\inv)$ for all $x \in \ax$ ($x^{\sig}$ should not be confused with $\sig(x)$). For any $\sig$-invariant subgroup $H $ of $\ax$, we denote by $C_{H}(\sig)$ the subgroup of $H$ consisting of all $\sig$-fixed elements; that is, $C_{H}(\sig) = \set{x \in H}{x^{\sig} = x} = \set{x \in H}{\sig(x\inv) = x}$. In the case where $\CA = \CM_{n}(\k)$, an arbitrary involution $\map{\sig}{\CM_{n}(\k)}{\CM_{n}(\k)}$ defines a group $C_{\GL_{n}(\k)}(\sig)$ which is isomorphic to one of the {\it finite classical groups of Lie type} (defined over $\k$): the {\it symplectic group} $Sp_{2m}(\k)$ if $\sig$ is symplectic, the {\it orthogonal groups} $O^{+}_{2m}(\k)$, $O_{2m+1}(\k)$, or $O^{-}_{2m+2}(\k)$ if $\sig$ is orthogonal, and the {\it unitary group} $U_{n}(\k)$ if $\sig$ is unitary. [For the details on the definition of the classical groups, we refer to Chapter I the book \cite{Car1} by R. Carter.] In fact, up to isomorphism, these groups may be defined by the involution $\sig = \sig_{u}$ where $u \in \GL_{n}(\k)$ is the matrix defined as follows; here, $J_{m}$ denotes the $m \x m$  matrix with $1$'s along the anti-diagonal and $0$'s elsewhere.
\begin{enumerate}
\item For $Sp_{2m}(\k)$, we choose $u = \left( \begin{smallmatrix} 0 & J_{m} \\ -J_{m} & 0 \end{smallmatrix} \right)$.
\item For $O^{+}_{2m}(\k)$ or $O_{2m+1}(\k)$, we choose $u = J_{n}$ where, either $n = 2m$, or $n = 2m+1$.
\item For $O^{-}_{2m+2}(\k)$, we choose $u = \left( \begin{smallmatrix} 0 & 0 & J_{m} \\ 0 & c & 0 \\ J_{m} & 0 & 0 \end{smallmatrix} \right)$ where $c = \left( \begin{smallmatrix} 1 & 0 \\ 0 & -\eps \end{smallmatrix} \right)$ for $\eps \in \kx \setminus (\kx)^{2}$.
\item For $U_{n}(\k)$, we choose $u = J_{n}$.
\end{enumerate}
We refer to $\sig = \sig_{u}$ (for this matrix $u$) as a \textit{canonical involution} on $\CM_{n}(\k)$.

As we mentioned above, our main goal in this paper is to develop a supercharacter theory for the group $C_{P}(\sig)$ in the case where $P$ is a $\sig$-invariant algebra subgroup of $\ax$. Our construction is given in terms of the supercharacter theory of $P$, and extends the results of \cite{AndNet1, AndNet2, AndNet3} in the particular case where $P = \UT_{n}(\k)$ is the unitriangular group over $\k$ and $C_{P}(\sig)$ is the Sylow $p$-subgroup of $Sp_{2m}(\k)$, $O^{+}_{2m}(\k)$ or $O_{2m+1}(\k)$. More generally, our construction applies to the particular case where $\CA = \CM_{n}(\k)$, and $\map{\sig}{\CM_{n}(\k)}{\CM_{n}(\k)}$ is any canonical involution. In this situation, it is well-known that the Sylow $p$-subgroups of $C_{\GL_{n}(\k)}(\sig)$ are conjugate to the $\sig$-fixed subgroup $C_{P}(\sig)$ where $P$ is, either is the unitriangular subgroup $\UT_{n}(\k)$ of $\GL_{n}(\k)$, or the subgroup of $\UT_{n}(\k)$ consisting of all unimodular upper-triangular matrices with $(m+1,m+2)$th position equal to zero. The former situation occurs only if $G$ is the orthogonal group $O^{-}_{2m+2}(q)$; indeed, the unitriangular group is not invariant for the corresponding involution. [In this case, the supercharacter theory of $P$ has a slighty different parametrization than that of $\UT_{n}(F)$, and thus the supercharacter theory of $C_{P}(\sig)$ has to be described separately; we leave this description as an exercise for the reader.]

To conclude this introduction, we mention that supercharacter theories have proven to be relevant outside the realm of finite group theory. For instance, as shown in \cite{DiaIsa} these notions can be used to obtain a more general theory of spherical functions and Gelfand pairs. Another application may be found in \cite{ArDiaSta} where the supercharacter theory of $\UT_{n}(\k)$ is applied to study random walks on upper-triangular matrices. In a different direction, recent work has revealed deep connections between the supercharacter theory of $\UT_{n}(\k)$ and the Hopf algebra of symmetric functions of noncommuting variables (see \cite{AIM,BJBerTh,BerTh}). We hope that analogous applications and connections could be derived using the supercharacter theories developed in this paper (see the recent paper \cite{Ben} by C. Benedetti). Finally, we also mention the relation between supercharacter theories and Schur rings discovered by O. Hendrickson in \cite{Hen}, and the applications of supercharacter theories of finite abelian groups to exponential sums in number theory (see \cite{FleGarKar,FowGarKar}).

\begin{notation}
Throughout the paper, we let $\k$ denote a finite field with odd characteristic $p$, let $\CA$ be a finite-dimensional $\k$-algebra endowed with an involution $\map{\sig}{\CA}{\CA}$, and let $\CJ$ be a $\sig$-invariant nilpotent subalgebra of $\CA$. Let $\ax$ denote the unit group of $\CA$, and let $P = 1+\CJ$ be the algebra subgroup of $\ax$ based on $\CJ$. Then, $P$ is $\sig$-invariant with respect to the action given by
\begin{equation} \label{eq:action1}
x^{\sig} = \sig(x\inv)
\end{equation}
for all $x \in \ax$. As usual, we write $C_{P}(\sig)$ to denote the subgroup of $P$ consisting of all $\sig$-fixed elements, that is, $$C_{P}(\sig) = \set{x\in P}{x^{\sig} = x}.$$

We define the {\it Cayley transform} $\map{\VPhi}{\CJ}{P}$ by the rule
\begin{equation} \label{eq:cayley1}
\VPhi(a) = (1+a)(1-a)\inv = 1+2a(1-a)\inv
\end{equation}
for all $a \in \CJ$; notice that $(1-a)\inv = 1 + a(1-a)\inv$ for all $a \in \CJ$. Since $p$ is odd, this map is bijective, and its inverse $\map{\VPsi}{P}{\CJ}$ is given by
\begin{equation} \label{eq:cayley2}
\VPsi(x)=(x-1)(x+1)^{-1}
\end{equation}
for all $x \in P$. It is clear that $\VPhi(\sig(a)) = \sig(\VPhi(a))$ for all $a \in \CJ$, and so the Cayley transform restricts to a bijective map $\map{\VPhi}{C_{\CJ}(\sig)}{C_{P}(\sig))}$ where we set $$C_{\CJ}(\sig) = \set{a \in \CJ}{\sig(a) = -a};$$ notice that $C_{\CJ}(\sig)$ is a vector space over the $\sig$-fixed subfield $\ks$ of $\k$. Throughout the paper, we consider the action of $\sig$ on $\CJ$ defined by
\begin{equation} \label{eq:action2}
a^{\sig} = -\sig(a)
\end{equation}
for all $a \in \CJ$, so that $C_{\CJ}(\sig) = \set{a \in \CJ}{a^{\sig} = a}$ is the (additive) subgroup of $\CJ$ consisting of all $\sig$-fixed elements. We observe that this action commutes with $\VPhi$, that is,
\begin{equation} \label{eq:commute}
\VPhi(a^{\sig}) = \VPhi(a)^{\sig}
\end{equation}
for all $a \in \CJ$; notice also that $\VPsi(x^{\sig}) = \VPsi(x)^{\sig}$ for all $x \in P$.

On the other hand, we denote by $\jc$ the \textit{dual group} of $\CJ^{+}$ which by definition consists of all linear characters $\map{\lam}{\CJ^{+}}{\C}$ of the additive group $\CJ^{+}$ of $\CJ$; since $\CJ^{+}$ is an abelian group, it is a standard fact that $\jc$ is the set $\irr(\CJ^{+})$ of all irreducible characters of $\CJ^{+}$. We note that $\jc$ is an abelian group with respect to the product of characters defined by $(\lam\mu)(a) = \lam(a)\mu(a)$ for all $\lam, \mu \in \jc$ and all $a \in \CJ$; in particular, notice that $\lam^{2}(a) = \lam(a)\lam(a) = \lam(2a)$ for all $\lam \in \jc$ and all $a \in \CJ$. For every $\lam \in \jc$, we define the linear character $\lam^{\sig} \in \jc$ by
\begin{equation} \label{eq:action3}
\lam^{\sig}(a) = \lam(a^{\sig}) = \lam(-\sig(a))
\end{equation}
for all $a \in \CJ$. This clearly defines an action of $\sig$ on $\jc$, and thus we can define the $\sig$-fixed subgroup $C_{\jc}(\sig) = \set{\lam \in \jc}{\lam^{\sig} = \lam}$ of $\jc$. However, we prefer to realise this subgroup as the dual group $\jcc$ of the additive group $C_{\CJ}(\sig)^{+}$ of $C_{\CJ}(\sig)$. In fact, it is easily seen that $\CJ$ decomposes as the direct sum $\CJ = C_{\CJ}(\sig) \oplus [\CJ,\sig]$ where $[\CJ,\sig] = \set{a+\sig(a)}{a \in \CJ}$, and thus $\jcc$ can be naturally identified with the orthogonal subgroup $[\CJ,\sig]\ort$; for any additive subgroup $\CI$ of $\CJ$, the \textit{orthogonal subgroup} $\CI\ort$ is defined by $\CI\ort = \set{\lam \in \jc}{\CI \sset \ker(\lam)}$. In light of the above identification, we see that
\begin{equation} \label{eq:jcc}
\jcc = \set{\lam \in \jc}{\lam^{\sig} = \lam};
\end{equation}
indeed, the \refeq{action3} implies that for every $\lam \in \jc$ we have $\lam^{\sig} = \lam$ if and only if $\lam(a+\sig(a)) = 1$ for all $a \in \CJ$.

\end{notation}


\section{Superclasses} \label{sec:super1}

Let $\CJ$ be a $\sig$-invariant nilpotent subalgebra of $\CA$, and let $P = 1+\CJ$. Then, right multiplication defines a right action of $P$ on $\CJ$, whereas left multiplication defines a left action of $P$ on $\CJ$; these two actions are compatible in the sense that $(xa)y = x(ay)$ for all $x,y \in P$ and all $a \in \CJ$. It follows that $\CJ$ decomposes as a disjoint union of \textit{two-sided orbits} $PaP$ for $a \in \CJ$. Then, the \textit{superclasses} of the algebra group $P$ are defined be the subsets of the form $1+PaP$ where $a \in \CJ$; we write $\scl(P)$ to denote the set of all superclasses of $P$. We note that, for any $a \in \CJ$, the set $PaP$ is an orbit for the natural action of $P\x P$ on $\CJ$ given by $(x,y) \cdot a = xay\inv$ for all $a \in \CJ$ and all $x,y \in P$, and that every superclass is a (disjoint) union of conjugacy classes. In fact, every two-sided orbit on $\CJ$ is a disjoint union of \textit{conjugation orbits} where the \textit{conjugation action} $P\x \CJ \to \CJ$ is defined by the mapping $(x,a) \mapsto xax\inv$.

The purpose of this section is to define superclasses of the $\sig$-fixed subgroup $C_{P}(\sig)$ of $P$, and the most natural way of defining them is to consider the non-empty intersections $\HCK \cap C_{P}(\sig)$ where $\HCK$ is a superclass of $P$. [Throughout the paper, we shall use the hat notation $\HCK$ for superclasses of $P$, and reserve the notation $\CK$ for the intersection $\HCK \cap C_{P}(\sig)$.] Obviously, the intersection $\HCK \cap C_{P}(\sig)$ is non-empty if and only if $\HCK$ contains an element which is fixed by $\sig$, and we shall prove that this is equivalent to requiring that the superclass $\HCK$ is $\sig$-invariant. We start by proving an alternative description of the superclasses of $P$ in terms of the Cayley transform $\map{\VPhi}{\CJ}{P}$; this is crucial for our work because $\VPhi$ defines a bijection from $C_{\CJ}(\sig)$ to $C_{P}(\sig)$ (whereas the standard mapping $a \mapsto 1+a$ does not).

\begin{lemma} \label{lem:scl1}
Let $\CJ$ be a $\sig$-invariant nilpotent subalgebra of $\CA$, and let $P = 1+\CJ$. If $a \in \CJ$ and $\HCK \in \scl(P)$ is the superclass which contains $x = \VPhi(a)$, then $\HCK = 1+P(2a)P = \VPhi(PaP)$. In particular, $\scl(P) = \set{\VPhi(PaP)}{a \in \CJ}$.
\end{lemma}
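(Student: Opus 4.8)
The plan is to compute directly what happens to the standard superclass description $1 + P(2a)P$ under the Cayley transform, exploiting the fact that $\VPhi$ intertwines the additive and multiplicative structures in the way made precise by the formula $\VPhi(a) = 1 + 2a(1-a)\inv$. The key observation is that the superclass of $P$ containing an element $x$ is $1 + P\,\VPsi(x)\,(\text{something})\,P$; more precisely, since superclasses are the sets $1 + PbP$ for $b \in \CJ$, and two such sets coincide iff the corresponding two-sided orbits $PbP$ coincide, it suffices to identify $\VPsi(x) \cdot P = (x-1)(x+1)\inv P$ and $P \cdot \VPsi(x)$ as two-sided orbits and relate them to $P(2a)P$.

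First I would establish the following elementary identity: for $x = \VPhi(a) = (1+a)(1-a)\inv$, one has $x - 1 = 2a(1-a)\inv$ and $x + 1 = 2(1-a)\inv$, hence $\VPsi(x) = (x-1)(x+1)\inv = 2a(1-a)\inv \cdot (1-a)/2 = a$, recovering $\VPsi \circ \VPhi = \mathrm{id}$ as already noted. More usefully, I would write $x - 1 = 2a(1-a)\inv$ directly. Now the superclass containing $x$ is $1 + P(x-1)P$ by the definition in Section 2 (the superclass of $P$ containing $1 + b$ is $1 + PbP$, and here $b = x - 1 = 2a(1-a)\inv$). So $\HCK = 1 + P\bigl(2a(1-a)\inv\bigr)P$. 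Since $(1-a)\inv \in P = 1 + \CJ$, right-multiplication by $(1-a)\inv$ permutes the right cosets, so $P\bigl(2a(1-a)\inv\bigr)P = P(2a)(1-a)\inv P = P(2a)P$, using that $(1-a)\inv P = P$. This gives $\HCK = 1 + P(2a)P$.

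It remains to identify $1 + P(2a)P$ with $\VPhi(PaP)$. For this I would take an arbitrary element $xay\inv \in PaP$ (with $x, y \in P$) and compute $\VPhi(xay\inv)$, or rather argue more cleanly: I claim $\VPhi(PaP) = 1 + P(2a)P$ as \emph{sets}. One inclusion: the map $b \mapsto 1 + 2b(1-b)\inv$ sends $PaP$ into... this is where a little care is needed, since $PaP$ need not be closed under the operations appearing in $\VPhi$. The cleaner route is to use \refeq{cayley1} in the form $\VPhi(b) - 1 = 2b(1-b)\inv$ together with the fact, which I would verify, that for $b \in PaP$ we have $2b(1-b)\inv \in P(2a)P$ (because $b = xay\inv$ gives $(1-b)\inv = (1 - xay\inv)\inv$, and one manipulates $2xay\inv(1-xay\inv)\inv = x \cdot 2a \cdot (y - ax \cdot \text{?})\inv$ — one writes $1 - xay\inv = x(x\inv - ay\inv) = x(x\inv y - a)y\inv$, so $(1-b)\inv = y(x\inv y - a)\inv x\inv$, and $x\inv y - a = (x\inv y)(1 - (x\inv y)\inv a) \in P \cdot (\text{unit})$ is invertible with inverse in... hmm, $x\inv y - a$ need not lie in $1 + \CJ$).

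The honest main obstacle is precisely this last bookkeeping: showing the two-sided orbit $PaP$ maps onto the two-sided orbit $P(2a)P$ under $\VPhi(b) = 1 + 2b(1-b)\inv$. I expect the right way to handle it is to avoid elementwise manipulation and instead argue at the level of $\VPhi$-images of single conjugacy-class-like pieces is unnecessary; rather, use that $\VPhi$ is a bijection $\CJ \to P$ and that superclasses partition both sides. Concretely: $\VPhi$ being a bijection, $\VPhi(PaP)$ has the same cardinality as $PaP$; it contains $x = \VPhi(a)$; and one shows $\VPhi(PaP) \sset \HCK$ by the coset computation above applied carefully, writing $b = xay\inv$ and $1 + 2b(1-b)\inv = 1 + 2xay\inv(1 - xay\inv)\inv$, then using $1 - xay\inv = x(1 - a y\inv x\inv)x\inv \cdot$ (adjust), so that $(1-b)\inv = x(1 - ay\inv x\inv)\inv x\inv$ with $ay\inv x\inv \in \CJ$, hence $(1 - ay\inv x\inv)\inv \in P$; therefore $2b(1-b)\inv = 2xay\inv x\inv \cdot x(1 - ay\inv x\inv)\inv x\inv = x \cdot 2a \cdot (y\inv x\inv)(1 - ay\inv x\inv)\inv x\inv \in P(2a)P$, since $(y\inv x\inv)(1 - ay\inv x\inv)\inv x\inv \in P$. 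This shows $\VPhi(PaP) \sset 1 + P(2a)P = \HCK$. Since moreover $\VPhi$ is injective and $|\VPhi(PaP)| = |PaP|$, while a symmetric argument (or a dimension/counting argument summing over all two-sided orbits, using that $\VPhi$ is a global bijection and the $1 + P(2a)P$ partition $P$ as $a$ ranges over orbit representatives) forces equality, we conclude $\VPhi(PaP) = \HCK = 1 + P(2a)P$. The final sentence $\scl(P) = \set{\VPhi(PaP)}{a \in \CJ}$ then follows since every superclass of $P$ is of the form $1 + PbP$ and $b = \VPhi(b')$ for a unique $b' \in \CJ$ — actually more directly, $1 + PbP$ contains $\VPhi(b/2 \cdot (1 + b/2)\inv \cdots)$; simplest is: given a superclass $\HCK = 1 + PbP$, pick $x = 1 + b \in \HCK$ and set $a = \VPsi(x)/2$... cleanest is just to note the superclass containing $\VPhi(a)$ is $\VPhi(PaP)$ by the main claim, and as $a$ ranges over $\CJ$ the elements $\VPhi(a)$ range over all of $P$, hence hit every superclass.
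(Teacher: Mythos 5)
Your proposal is correct and follows essentially the same route as the paper: identify $\HCK = 1+P(2a)P$ from $\VPhi(a)-1 = 2a(1-a)\inv$, show $\VPhi(PaP) \sset 1+P(2a)P$ by a direct computation, and force equality from the bijectivity of $\VPhi$ together with the fact that $PaP$ and $P(2a)P$ have the same size (your partition/counting variant works as well). One remark: your factorization of $1-xay\inv$ is garbled (it should be $x(1-ay\inv x)x\inv$), but it is also unnecessary, since $b=xay\inv$ lies in $\CJ$, so $(1-b)\inv \in P$ outright and $\VPhi(b)-1 = x(2a)\,y\inv(1-b)\inv \in P(2a)P$ immediately, which is exactly the paper's one-line verification.
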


\begin{proof}
Since $x = \VPhi(a) = 1+2a(1-a)\inv$, we clearly have $x \in 1+P(2a)P$, and thus $\HCK = 1+P(2a)P$. If $y,z \in P$, then $\VPhi(yaz) \in 1+P(2yaz)P = 1+P(2a)P$, and thus $\VPhi(PaP) \sset 1+P(2a)P$. The result follows because $\VPhi$ is bijective and $|PaP| = |P(2a)P|$.
\end{proof}

Next, we observe that the cyclic group $\ger{\sig}$ acts on the set $\scl(P)$.

\begin{lemma} \label{lem:scl2}
Let $\CJ$ be a $\sig$-invariant nilpotent subalgebra of $\CA$, and let $P = 1+\CJ$. If $\HCK \in \scl(P)$, then $\HCK^{\sig} \in \scl(P)$; in fact, if $\HCK = \VPhi(PaP)$ for $a \in \CJ$, then $\HCK^{\sig} = \VPhi(Pa^{\sig}P)$.
\end{lemma}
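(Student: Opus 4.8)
The plan is to use the explicit description of superclasses from \refl{scl1} together with the compatibility relation \refeq{commute} between the Cayley transform and the action of $\sig$. First I would take $\HCK = \VPhi(PaP)$ for some $a \in \CJ$, as guaranteed by \refl{scl1}, and apply the automorphism $x \mapsto x^{\sig}$ of $P$ to the set $\HCK$. Since $\sig$ (acting on $P$ via \refeq{action1}) is a group automorphism, applying it to the set $1+P(2a)P = \VPhi(PaP)$ amounts to applying it elementwise; the point is to identify the resulting set as $\VPhi(P\,a^{\sig}\,P)$, which by \refl{scl1} is again a superclass of $P$, namely the one containing $\VPhi(a^{\sig})$.

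The key computation is the following. For $y, z \in P$ and $a \in \CJ$, one has $\VPhi(yaz)^{\sig} = \VPhi\bigl((yaz)^{\sig}\bigr)$ by \refeq{commute} (extended to the two-sided multiplicative setting, which follows from $\VPhi(\sig(b)) = \sig(\VPhi(b))$ for all $b\in\CJ$ together with the definition $a^\sig = -\sig(a)$ and $x^\sig=\sig(x\inv)$). Now $(yaz)^{\sig} = -\sig(yaz) = -\sig(z)\sig(a)\sig(y)$; since $\sig(y) = \sig\bigl((y\inv)\inv\bigr) = (y^{\sig})\inv$ and likewise $\sig(z) = (z^{\sig})\inv$, and since $-\sig(a) = a^{\sig}$, we get $(yaz)^{\sig} = (z^{\sig})\inv\,(-\sig(a))\,\cdots$ — more carefully, $-\sig(z)\sig(a)\sig(y) = -\sig(z)\sig(a)\sig(y)$, and pulling the minus sign onto $\sig(a)$ gives $\sig(z)\,a^{\sig}\,\sig(y) = (z^{\sig})\inv\, a^{\sig}\, (y^{\sig})\inv$. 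As $y, z$ range over $P$, so do $(z^{\sig})\inv$ and $(y^{\sig})\inv$ (because $\sig$ and inversion are bijections of $P$), hence the set $\{(yaz)^{\sig} : y,z \in P\}$ equals $P\,a^{\sig}\,P$. Therefore $\HCK^{\sig} = \VPhi(PaP)^{\sig} = \{\VPhi(yaz)^{\sig} : y,z\in P\} = \{\VPhi(b) : b \in P\,a^{\sig}\,P\} = \VPhi(P\,a^{\sig}\,P)$, which is a superclass by \refl{scl1}.

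I expect the only mildly delicate point to be the bookkeeping with the two different actions of $\sig$: the multiplicative one $x^{\sig} = \sig(x\inv)$ on $P$ (and on $\ax$), and the additive one $a^{\sig} = -\sig(a)$ on $\CJ$, together with the fact that $\VPhi$ intertwines them, i.e. \refeq{commute}. Once one is careful that $\VPhi(yaz)$ need not equal $1 + yaz$ but does lie in $1 + P(2yaz)P$, and that $\sig$ applied to a two-sided orbit $PaP$ in $\CJ$ (with the additive action) produces $P\,a^{\sig}\,P$ — which is the genuine obstacle to watch, namely verifying $\sig$ sends the $P\times P$-orbit of $a$ to the $P\times P$-orbit of $a^{\sig}$ rather than something else — the argument closes immediately. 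A clean way to package this is to first record the identity $\VPhi(yaz)^{\sig} = \VPhi\bigl((z^{\sig})\inv\, a^{\sig}\, (y^{\sig})\inv\bigr)$ as a one-line lemma-internal computation, and then invoke surjectivity of $x \mapsto (x^{\sig})\inv$ on $P$.
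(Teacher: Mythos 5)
Your proof is correct and follows essentially the same route as the paper: the paper's own argument is exactly the one-line observation that \refeq{commute} together with $(xay)^{\sig} = y^{-\sig}a^{\sig}x^{-\sig}$ (your $(yaz)^{\sig} = (z^{\sig})\inv a^{\sig}(y^{\sig})\inv$) identifies $\VPhi(PaP)^{\sig}$ with $\VPhi(Pa^{\sig}P)$. You have merely spelled out the bookkeeping that the paper leaves implicit.
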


\begin{proof}
It is enough to use \refeq{commute} since $(xay)^{\sig} = y^{-\sig}a^{\sig}x^{-\sig}$ for all $x,y \in P$; as usual, we write $z^{-\sig} = (z\inv)^{\sig}$ for all $z \in P$. 
\end{proof}

Henceforth, we denote by $\scl_{\sig}(P)$ the subset of $\scl(P)$ consisting of all $\sig$-invariant superclasses of $P$. By \cite[Corollary~13.10]{Isa1}, every conjugacy class $\CC$ of $C_{P}(\sig)$ is the intersection $\CC = \HCC \cap C_{P}(\sig)$ for some $\sig$-invariant conjugacy class $\HCC$ of $P$, and moreover the mapping $\HCC \mapsto \HCC \cap C_{P}(\sig)$ defines a bijection between the set of $\sig$-invariant conjugacy class of $P$ and the set of conjugacy classes of $C_{P}(\sig)$. Therefore, for every superclass $\HCK \in \scl(P)$, either the intersection $\HCK \cap C_{P}(\sig)$ is empty, or it is a union of conjugacy classes of $C_{P}(\sig)$; this is one of the conditions which should be satisfied by any set of superclasses. We define a \textit{superclass} of $C_{P}(\sig)$ to be a non-empty intersection $\HCK \cap C_{P}(\sig)$ for $\HCK \in \scl(P)$, and denote by $\scl(C_{P}(\sig))$ the set of all superclasses of $C_{P}(\sig)$. [Eventually, we will define the supercharacters of $C_{P}(\sig)$, and we will see that these definitions are compatible with the general definition of a supercharacter theory.] We have the following result.

\begin{proposition} \label{prop:scl1}
Let $\CJ$ be a $\sig$-invariant nilpotent subalgebra of $\CA$, let $P = 1+\CJ$, and let $\HCK \in \scl(P)$. Then, the intersection $\HCK \cap C_{P}(\sig)$ is non-empty if and only if the superclass $\HCK$ is $\sig$-invariant.
\end{proposition}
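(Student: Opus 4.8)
The ``if'' direction is essentially free: if $\HCK$ is $\sig$-invariant, then $\sig$ permutes the (finite) superclass $\HCK$, and since $\sig$ has order $2$ this permutation has a fixed point unless $\HCK$ has no $\sig$-fixed element. To rule that out I would invoke the parametrization from \refl{scl1}: write $\HCK = \VPhi(PaP)$, so by \refl{scl2} the hypothesis $\HCK^{\sig} = \HCK$ says $\VPhi(Pa^{\sig}P) = \VPhi(PaP)$, i.e.\ $Pa^{\sig}P = PaP$ as two-sided orbits in $\CJ$. Since $\VPhi$ carries $C_{\CJ}(\sig)$ bijectively onto $C_{P}(\sig)$, it suffices to show the two-sided orbit $PaP$ contains an element $b$ with $\sig(b) = -b$, i.e.\ with $b^{\sig} = b$. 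So the whole proposition reduces to the following orbit statement: if the $(P\x P)$-orbit $\CO = PaP \sset \CJ$ satisfies $\CO^{\sig} = \CO$ (with the action $b \mapsto b^{\sig} = -\sig(b)$), then $\CO$ meets $C_{\CJ}(\sig)$.

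For that orbit statement I would exploit the fact that $P = 1+\CJ$ is a $p$-group and $\sig$ (acting as $b \mapsto -\sig(b) = b^{\sig}$) has order dividing $2$, which is coprime to $p$. Form the semidirect product $P \rtimes \ger{\sig}$ (equivalently, let $\ger{\sig}$ act on the set $P\x P$ of ``two-sided translations'' via $(x,y)^{\sig} = (y^{-\sig}, x^{-\sig})$, which is compatible with the action on $\CJ$ by \refl{scl2}'s identity $(xay)^{\sig} = y^{-\sig} a^{\sig} x^{-\sig}$). The $\sig$-invariance of $\CO$ means $\ger{\sig}$ acts on $\CO$; I want a $\sig$-fixed point. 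The clean way is a counting/averaging argument of the Glauberman type: the orbit $\CO$ is a transitive $P\x P$-set, its size is a power of $p$, and $\ger{\sig}$ (of order $1$ or $2$) acts on it commuting with nothing in general — but here one uses that $\ger{\sig}$ acts compatibly, so the number of $\sig$-fixed points of $\CO$ is congruent mod $p$ to $|\CO| \equiv 0$ only if... — more precisely, since $\gcd(2,p)=1$, a standard fixed-point lemma (e.g.\ \cite[Theorem~13.1 or Glauberman's lemma]{Isa1}, already used in the excerpt for conjugacy classes) gives that a group of order prime to $p$ acting on a transitive $p$-set coming from a $p$-group $P\x P$ compatibly with the $P\x P$-action has a fixed point. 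In fact the cleanest route is to mimic exactly \cite[Corollary~13.10]{Isa1}: the two-sided orbit $\CO$ is in natural bijection with $(P\x P)/\mathrm{Stab}(a)$, $\ger{\sig}$ acts on this coset space compatibly with $(P\x P)\rtimes\ger{\sig}$, and Glauberman's lemma produces a $\sig$-fixed coset, hence a $\sig$-fixed point $b \in \CO$.

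Translating back: such a $b$ satisfies $b^{\sig} = b$, i.e.\ $\sig(b) = -b$, so $b \in C_{\CJ}(\sig)$, and then $\VPhi(b) \in \HCK \cap C_{P}(\sig)$, proving the intersection is non-empty. For the converse (``only if''), suppose $x \in \HCK \cap C_{P}(\sig)$, so $x^{\sig} = x$; then $\HCK^{\sig}$ is the superclass containing $x^{\sig} = x$, hence $\HCK^{\sig} = \HCK$ since superclasses partition $P$. This uses only \refl{scl2} and the fact that $\scl(P)$ is a partition.

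The main obstacle is the orbit statement in the second paragraph: I must make sure the $\ger{\sig}$-action on $P\x P$ (or directly on the stabilizer coset space) really is ``compatible'' in the sense required for Glauberman's lemma (the action of $\sig$ must normalize the relevant $p$-group action), and that $\mathrm{Stab}_{P\x P}(a)$ can be taken $\sig$-invariant — this is where the identity $(xay)^{\sig} = y^{-\sig}a^{\sig}x^{-\sig}$ does the work, showing $(x,y) \mapsto (y^{-\sig},x^{-\sig})$ is an order-$2$ automorphism of $P\x P$ under which $\CO$ is invariant when $a^{\sig}$ is in the orbit of $a$. Everything else is bookkeeping with the Cayley transform and \refl{scl1}--\refl{scl2}.
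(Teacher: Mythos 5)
Your proposal is correct and follows essentially the same route as the paper: the easy direction uses that superclasses partition $P$, and the hard direction transfers $\sig$-invariance of $\HCK$ to the two-sided orbit $PaP$ via the Cayley transform and Lemmas \ref{lem:scl1}--\ref{lem:scl2}, then applies Glauberman's Lemma to the compatible order-two action $(x,y)\mapsto(y^{\sig},x^{\sig})$ on $P\x P$ to produce $b \in PaP$ with $b^{\sig}=b$, exactly as in the paper's proof. The ``obstacle'' you flag at the end is not one: Glauberman's Lemma in the form \cite[Lemma~13.8]{Isa1} needs only the compatibility identity $((x,y)\cdot a)^{\sig} = (x,y)^{\sig}\cdot a^{\sig}$ and the coprimality $2 \nmid |P\x P|$, with no $\sig$-invariant stabilizer required, which is precisely how the paper argues.
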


\begin{proof}
If $\HCK \cap C_{P}(\sig)$ is non-empty and $x \in \HCK \cap C_{P}(\sig)$, then $x \in \HCK \cap \HCK^{\sig}$. Since $\HCK^{\sig}$ is a superclass of $P$, it follows that $\HCK = \HCK^{\sig}$. Conversely, suppose that $\HCK = \HCK^{\sig}$, and let $a \in \CJ$ be such that $\VPhi(a) \in \HCK$. By the previous lemma, we have $PaP = Pa^{\sig}P$. Now, we consider the automorphism of the group $P\x P$ defined by the mapping $(x,y) \mapsto (x,y)^{\sig} = (y^{\sig},x^{\sig})$, and observe that $$((x,y)\cdot a)^{\sig} = (xay\inv)^{\sig} = y^{\sig}a^{\sig}x^{-\sig} = (y^{\sig},x^{\sig}) \cdot a^{\sig} = (x,y)^{\sig} \cdot a^{\sig}$$ for all $x,y \in P$ and all $a \in \CJ$. Thus, since $P\x P$ acts transitively on $PaP$ (and since $2 \nmid |P|$), Glauberman's Lemma (see \cite[Lemma~13.8]{Isa1}) implies that there exists $b \in PaP$ such that $b^{\sig} = b$. By \refeq{commute} and by the previous lemma, we conclude that the element $x = \VPhi(b) \in P$ satisfies $x^{\sig} = x$ and lies in $\HCK$.
\end{proof}

It follows that
\begin{equation} \label{eq:scl}
\scl(C_{P}(\sig)) = \set{\HCK \cap C_{P}(\sig)}{\HCK \in \scl_{\sig}(P)};
\end{equation}
moreover, the mapping $\HCK \mapsto \HCK \cap C_{P}(\sig)$ defines a bijection between $\scl_{\sig}(P)$ and $\scl(C_{P}(\sig))$. As we observed above, since every superclass of $P$ is a union of conjugacy classes, \cite[Corollary~13.10]{Isa1} implies that every superclass of $C_{P}(\sig)$ is also a union of conjugacy classes. Indeed, the following result also implies that every superclass of $C_{P}(\sig)$ is invariant under conjugation. 

\begin{theorem} \label{thm:scl2}
Let $\CJ$ be a $\sig$-invariant nilpotent subalgebra of $\CA$, and let $P = 1+\CJ$. If $a \in C_{\CJ}(\sig)$ and $\HCK \in \scl(P)$ contains $x = \VPhi(a) \in C_{P}(\sig)$, then $$\HCK \cap C_{P}(\sig) = \set{\VPhi(zaz^{-\sig})}{z \in P}$$ where we write $z^{-\sig} = (z\inv)^{\sig}$ for all $z \in P$.
\end{theorem}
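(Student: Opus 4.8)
The plan is to show the two sets coincide by a double inclusion, using Proposition~\ref{prop:scl1} to reduce everything to the superclass $\HCK$ being $\sig$-invariant (which it is, since it meets $C_P(\sig)$), and then applying the same Glauberman-type argument already used in that proposition, but now \emph{relativised} to each individual $P\x P$-orbit inside $PaP$ rather than to $PaP$ as a whole. First I would observe that, by Lemma~\ref{lem:scl1}, $\HCK = \VPhi(PaP)$, so every element of $\HCK\cap C_P(\sig)$ has the form $\VPhi(b)$ with $b\in PaP$ and, by \refeq{commute}, $b^{\sig}=b$; conversely each $\VPhi(zaz^{-\sig})$ lies in $\HCK$ because $zaz^{-\sig}\in PaP$, and it lies in $C_P(\sig)$ because $(zaz^{-\sig})^{\sig} = z^{\sig\sig\sig\dots}$— more carefully, using $(xay)^{\sig}=y^{-\sig}a^{\sig}x^{-\sig}$ as in Lemma~\ref{lem:scl2} together with $a^{\sig}=a$, one gets $(zaz^{-\sig})^{\sig} = (z^{-\sig})^{-\sig}a (z)^{-\sig} = z a z^{-\sig}$. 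So the right-hand set is contained in $\HCK\cap C_P(\sig)$, and it remains to prove the reverse inclusion.

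For the reverse inclusion, I would take $y\in\HCK\cap C_P(\sig)$, write $y=\VPhi(b)$ with $b\in PaP$ and $b^{\sig}=b$, and show $b = zaz^{-\sig}$ for some $z\in P$. The key point is that the twisted conjugation action $z\cdot a = zaz^{-\sig}$ of $P$ on the set $C_{\CJ}(\sig)\cap PaP$ has a single orbit. To see this, consider as in the proof of \refp{scl1} the group $G = P\x P$ with the order-two automorphism $\tau\colon (x,y)\mapsto(y^{\sig},x^{\sig})$; its fixed-point subgroup is $C_G(\tau) = \set{(z,z^{\sig})}{z\in P}\cong P$, and the orbit of $a$ under $C_G(\tau)$ is exactly $\set{zaz^{-\sig}}{z\in P}$. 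Since $a$ and $b$ are both $\tau$-invariant points of the transitive $G$-set $PaP$ (with $G$ acting coprimely, as $p$ is odd), I would invoke Glauberman's Lemma in the form that says $C_G(\tau)$ acts transitively on the set of $\tau$-fixed points $(PaP)^{\sig}$; hence there is $z\in P$ with $zaz^{-\sig} = (z,z^{\sig})\cdot a = b$, and then $y = \VPhi(b) = \VPhi(zaz^{-\sig})$ lies in the right-hand set.

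The main obstacle is making the Glauberman-Lemma bookkeeping precise: one must check that $b$ is genuinely the element produced by the first application of Glauberman in \refp{scl1} \emph{up to} $P$-twisted-conjugacy — that is, that all $\tau$-fixed points of $PaP$ form a single $C_G(\tau)$-orbit, not merely that one such point exists. The cited \cite[Lemma~13.8]{Isa1} gives existence of a fixed point; the transitivity-on-fixed-points statement is the standard strengthening (sometimes itself called Glauberman's Lemma, cf. \cite[Lemma~13.9]{Isa1} or the discussion around \cite[Corollary~13.10]{Isa1}), valid because $|G|$ and $|\ger{\tau}|=2$ are coprime and $G$ acts transitively. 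Once transitivity on fixed points is in hand, one also needs the translation dictionary $a^{\sig}=-\sig(a)$ versus $b^{\sig}=b$ and the identity $(xay)^{\sig}=y^{-\sig}a^{\sig}x^{-\sig}$, but those are exactly the computations already recorded in Lemma~\ref{lem:scl2} and in the proof of \refp{scl1}, so no new calculation is required. Finally I would note that, as a by-product, this description exhibits $\HCK\cap C_P(\sig)$ as a union of $C_P(\sig)$-conjugacy classes: if $b = zaz^{-\sig}$ with $b^{\sig}=b$, then for $w\in C_P(\sig)$ we have $w\VPhi(b)w\inv = \VPhi(wbw\inv) = \VPhi(wbw^{-\sig})$, which is again of the required form.
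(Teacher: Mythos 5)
Your proposal is correct and follows essentially the same route as the paper: after using Lemma \ref{lem:scl1} and \refeq{commute} to identify $\HCK \cap C_{P}(\sig)$ with $\VPhi$ of the $\sig$-fixed points of $PaP$, the paper likewise views $\sig$ as the automorphism $(y,z)\mapsto(z^{\sig},y^{\sig})$ of $P\x P$ and invokes \cite[Corollary~13.9]{Isa1} (the transitivity-on-fixed-points form of Glauberman's Lemma, which is exactly the strengthening you correctly identified as the needed ingredient) to conclude that these fixed points form a single orbit of $C_{P\x P}(\sig)=\set{(z,z^{\sig})}{z\in P}$, i.e.\ equal $\set{zaz^{-\sig}}{z\in P}$. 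Your extra verifications (the forward inclusion computation and the closing remark about conjugacy classes) are fine but not needed beyond what the paper records.
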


\begin{proof}
By \refl{scl1} and \refeq{commute}, we see that $\HCK \cap C_{P}(\sig) = \set{\VPhi(u)}{u \in PaP,\ u^{\sig} = u}$. As in the proof of \refp{scl1}, we consider $\sig$ as the automorphism of $P\x P$ given by the mapping $(y,z) \mapsto (z^{\sig},y^{\sig})$. It follows by \cite[Corollary~13.9]{Isa1} that the set $\set{u \in PaP}{u^{\sig} = u}$ is an orbit for the action of the subgroup $C_{P\x P}(\sig) = \set{(z,z^{\sig})}{z \in P}$.  In other words, we have $\set{u \in PaP}{u^{\sig} = u} = \set{zuz^{-\sig}}{z \in P}$, and the result follows.
\end{proof}

We note that the algebra group $P$ acts on the left of $C_{\CJ}(\sig)$ by the rule $x \cdot a = x\inv ax^{\sig}$ for all $x \in P$ and all $a \in C_{\CJ}(\sig)$. Then, the previous theorem asserts that the superclass of $C_{P}(\sig)$ which contains an element $x \in C_{P}(\sig)$ is the image $\VPhi(\Ome_{P}(a))$ of the orbit $\Ome_{P}(a) = \set{x\inv ax^{\sig}}{x \in P}$ which contains the element $a \in C_{\CJ}(\sig)$ such that $x = \VPhi(a)$.

\section{Supercharacters} \label{sec:super2}

In this section we define the supercharacters of the group $C_{P}(\sig)$ where $P = 1+\CJ$ and $\CJ$ is a $\sig$-invariant nilpotent subalgebra of $\CA$. We start by summarising the construction of the supercharacters of the algebra group $P$; our main reference is \cite{DiaIsa}. Let $\jc$ be the dual group of $\CJ^{+}$, and for every $\lam \in \jc$ and every $x \in P$ define the linear characters $\lam x, x\lam \in \jc$ by the formulas $(\lam x)(a) = \lam(ax\inv)$ and $(x\lam)(a) = \lam(x\inv a)$ for all $a \in \CJ$. These actions are compatible in the sense that $(x\lam)y = x(\lam y)$ for all $x,y \in P$ and all $\lam \in \jc$, and thus $\jc$ decomposes as a disjoint union of \textit{two-sided orbits} $P\lam P$ for $\lam \in \jc$. Furthermore, every two-sided orbit on $\jc$ is a disjoint union of \textit{conjugation orbits} where the \textit{conjugation action} $P\x \jc \to \jc$ is defined by the mapping $(x,\lam) \mapsto x\lam x\inv$. We also observe that $P\lam P$ is an orbit for the natural action of $P\x P$ on $\jc$ given by $(x,y) \cdot \lam = x \lam y\inv$ for all $\lam \in \jc$ and all $x,y \in P$.

The supercharacters of $P$ are in one-to-one correspondence with the two-sided orbits on $\jc$. For every $\lam \in \jc$, the {\it supercharacter} $\hchi_{\lam}$ which corresponds to $P\lam P$ is given by the formula
\begin{equation} \label{eq:superch1}
\hchi_{\lam}(x) = \frac{|P\lam|}{|P\lam P|} \sum_{\mu \in P\lam P} \mu(x-1)
\end{equation}
for all $x \in P$; we set $\sch(P) = \set{\hchi_{\lam}}{\lam \in \jc}$. [As for superclasses, we shall use the hat notation $\hchi$ for characters of $P$, and reserve the non-hat notation for the characters of $C_{P}(\sig)$; in particular, $\hchi_{\lam}$ will always refer to the supercharacter of $P$ associated with the linear character $\lam \in \jc$ of $\CJ$.] It is clear that every supercharacter $\hchi \in \sch(P)$ has a constant value on each superclass of $P$, and that for every $\lam, \mu \in \jc$ we have $\frob{\hchi_{\lam}}{\hchi_{\mu}} = 0$ unless $P\lam P = P\mu P$, in which case we clearly have $\hchi_{\lam} = \hchi_{\mu}$. [If $G$ is any finite group, we define the \textit{Frobenius scalar product} $$\frob{\alp}{\beta} = \frac{1}{|G|} \sum_{x \in G} \alp(x)\overline{\beta(x)}$$ for all complex-valued functions $\alp$ and $\beta$ defined on $G$.] In fact, it is straightforward to check that the regular character $\vrho_{P}$ of $P$ decomposes as the orthogonal sum $\vrho_{P} = \sum_{\hchi \in \sch(P)} n_{\hchi} \hchi$ where $n_{\hchi} = \hchi(1)/\frob{\hchi}{\hchi}$ for all $\hchi \in \sch(P)$. In fact, for every $\lam \in \jc$, we have $$\hchi_{\lam}(1) = |P\lam| \and \frob{\hchi_{\lam}}{\hchi_{\lam}} = |P\lam \cap \lam P|$$ (see \cite[Lemma~5.9]{DiaIsa}), and thus if we define $$n_{\lam} = n_{\hchi_{\lam}} = \frac{|P\lam|}{|P\lam \cap \lam P|} = \frac{|P\lam P|}{|P\lam|},$$ then since $\rho_{P} = \sum_{\hphi \in \irr(P)} \hphi(1) \hphi$ we conclude that
\begin{equation} \label{eq:schar}
n_{\lam}\hchi_{\lam} = \sum_{\hphi \in \irr_{\lam}(P)} \hphi(1) \hphi
\end{equation}
where $\irr_{\lam}(P)$ denotes the set consisting of all irreducible constituents of $\hchi_{\lam}$. In particular, it follows that every irreducible character of $P$ is a constituent of a unique supercharacter. Therefore, in order to have a supercharacter theory, it remains to show that a supercharacter is indeed a character of $P$, and this is proved  in \cite[Theorems~5.4~and~5.6]{DiaIsa} (see also \refs{linear}) .

In order to define the supercharacters of the $\sig$-fixed subgroup $C_{P}(\sig)$, we consider $\sig$-invariant supercharacters of $P$; we observe that, if $\cf(P)$ denotes the complex vector space consisting of all class functions of $P$, then $\sig$ acts naturally on $\cf(P)$ by the rule $\psi^{\sig}(x) = \psi(x^{\sig})$ for all $\psi \in \cf(P)$ and all $x \in P$. For our purposes, it is convenient to define the supercharacters of $P$ by means of the inverse Cayley transform $\map{\VPsi}{P}{\CJ}$ as follows. For every $\lam \in \jc$, we define the function $\map{\hxi_{\lam}}{P}{\C}$ by the formula
\begin{equation} \label{eq:superch2}
\hxi_{\lam}(x) = \frac{|P\lam|}{|P\lam P|} \sum_{\mu \in P\lam P} \mu(\VPsi(x))
\end{equation}
for all $x \in P$. We have the following result (which allows us to use the word ``supercharacter'' when we refer to any of these functions).  

\begin{proposition} \label{prop:sch0}
Let $\CJ$ be a $\sig$-invariant nilpotent subalgebra of $\CA$, and let $P = 1+\CJ$. Then, for every $\lam \in \jc$ the supercharacter $\hchi_{\lam} \in \sch(P)$ equals the function $\hxi_{\lam^{2}}$. In particular,  we have $\sch(P) = \set{\hxi_{\lam}}{\lam \in \jc}$.
\end{proposition}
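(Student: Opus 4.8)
The plan is to reduce everything to the squaring map $\lambda \mapsto \lambda^{2}$ on $\jc$, which is a bijection because $p$ is odd, together with \refl{scl1}, which identifies the superclass containing $\VPhi(a)$ with $1+P(2a)P$.

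First I would record the elementary properties of squaring on $\jc$. Since $\lambda^{2}(a) = \lambda(2a)$ and multiplication by $2$ is an automorphism of $\CJ^{+}$ (this is exactly where $p$ odd is used), the map $\lambda \mapsto \lambda^{2}$ is an automorphism of the abelian group $\jc$; in particular it is injective. Moreover it commutes with the two-sided action: for $x,y \in P$ one computes $(x\lambda y\inv)^{2}(a) = (x\lambda y\inv)(2a) = \lambda(x\inv(2a)y) = \lambda^{2}(x\inv a y) = (x\lambda^{2}y\inv)(a)$, so $(x\lambda y\inv)^{2} = x\lambda^{2}y\inv$. Hence $\mu \mapsto \mu^{2}$ restricts to bijections $P\lambda \to P\lambda^{2}$ and $P\lambda P \to P\lambda^{2}P$, so that $|P\lambda^{2}| = |P\lambda|$ and $|P\lambda^{2}P| = |P\lambda P|$.

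Next, fix $x \in P$ and write $x = \VPhi(a)$ with $a = \VPsi(x) \in \CJ$. On the one hand, since $\VPsi(\VPhi(a)) = a$, reindexing the orbit sum in \eqref{eq:superch2} along $\mu \mapsto \mu^{2}$ and using $\mu^{2}(a) = \mu(2a)$ gives
\[
\hxi_{\lambda^{2}}(x) \;=\; \frac{|P\lambda^{2}|}{|P\lambda^{2}P|}\sum_{\mu \in P\lambda^{2}P}\mu(a) \;=\; \frac{|P\lambda|}{|P\lambda P|}\sum_{\mu \in P\lambda P}\mu(2a).
\]
On the other hand, by \refl{scl1} the element $\VPhi(a)$ lies in the superclass $1+P(2a)P$, which is precisely the superclass of $1+2a$; since every supercharacter of $P$ is constant on superclasses, \eqref{eq:superch1} yields
\[
\hchi_{\lambda}(x) \;=\; \hchi_{\lambda}(1+2a) \;=\; \frac{|P\lambda|}{|P\lambda P|}\sum_{\mu \in P\lambda P}\mu(2a).
\]
Comparing the two expressions gives $\hchi_{\lambda} = \hxi_{\lambda^{2}}$ as functions on $P$, and since $\lambda \mapsto \lambda^{2}$ is a bijection of $\jc$ it follows that $\sch(P) = \set{\hchi_{\lambda}}{\lambda \in \jc} = \set{\hxi_{\lambda^{2}}}{\lambda \in \jc} = \set{\hxi_{\lambda}}{\lambda \in \jc}$.

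The one point to watch — and where a direct attack stalls — is that $x-1$ and $\VPsi(x)$ are not equal for a general $x$ (they differ by a term in $\CJ^{2}$), so the comparison cannot be made by naively matching the summands of \eqref{eq:superch1} and \eqref{eq:superch2}; instead one must pass to the canonical superclass representative $1+2a$ via \refl{scl1} and invoke the superclass-invariance of $\hchi_{\lambda}$. Everything else is bookkeeping with orbit sums, the oddness of $p$ entering only to make the squaring maps bijective.
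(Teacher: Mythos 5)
Your proof is correct and takes essentially the same route as the paper's: both reduce $\hchi_{\lam}(\VPhi(a))$ to $\hchi_{\lam}(1+2a)$ using superclass-constancy together with \refl{scl1}, and then use the compatibility $(x\lam y)^{2}=x\lam^{2}y$ (with $p$ odd making squaring a bijection of $\jc$) to reindex the orbit sum and identify it with $\hxi_{\lam^{2}}(\VPhi(a))$. The only difference is that you spell out the cardinality equalities $|P\lam|=|P\lam^{2}|$ and $|P\lam P|=|P\lam^{2}P|$ and the surjectivity of squaring, which the paper leaves implicit.
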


\begin{proof}
By \refeq{superch1}, we see that $\hxi_{\lam}(\VPhi(a)) = \hchi_{\lam}(1+a)$ for all $a \in \CJ$. In fact, since supercharacters are constant on superclasses, \refeq{cayley1} implies that
\begin{align*}
\hchi_{\lam}(\VPhi(a)) &= \hchi_{\lam}(1+2a(1-a)\inv) = \hchi_{\lam}(1+2a) \\ &= \frac{|P\lam|}{|P\lam P|} \sum_{\mu \in P\lam P} \mu(2a) = \frac{|P\lam|}{|P\lam P|} \sum_{\mu \in P\lam P} \mu^{2}(a)
\end{align*}
for all $a \in \CJ$. Since $(x\lam y)^{2} = x\lam^{2}y$ for all $x,y \in P$ (as it is easily seen), it follows that $$\hchi_{\lam}(\VPhi(a)) = \frac{|P\lam|}{|P\lam P|} \sum_{\mu \in P\lam^{2} P} \mu(a) = \hxi_{\lam^{2}}(\VPhi(a))$$ for all $a \in \CJ$ as required.
\end{proof}

We next show that the $\sig$-action on $\cf(P)$ restricts to a $\sig$-action on $\sch(P)$. We first observe that $$P\lam^{\sig} P = (P\lam P)^{\sig} = \set{\mu^{\sig}}{\mu \in P\lam P};$$ in fact, $(x\lam y)^{\sig} = y^{-\sig}\lam^{\sig}x^{-\sig}$ for all $x,y \in P$.

\begin{lemma} \label{lem:sch1}
Let $\CJ$ be a $\sig$-invariant nilpotent subalgebra of $\CA$, let $P = 1+J$, and let $\lam \in \jc$. Then, $(\hxi_{\lam})^{\sig} = \hxi_{\lam^{\sig}}$, and thus $(\hxi_{\lam})^{\sig}$ is a supercharacter of $P$.
\end{lemma}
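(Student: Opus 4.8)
The plan is to verify directly from the definition \eqref{eq:superch2} that $(\hxi_{\lam})^{\sig}$ coincides with $\hxi_{\lam^{\sig}}$ as a function on $P$, and then invoke \refp{sch0} to conclude that $(\hxi_{\lam})^{\sig}$ is genuinely a supercharacter. The computation rests on two observations that are already recorded in the excerpt: first, $\VPsi(x^{\sig}) = \VPsi(x)^{\sig}$ for all $x \in P$ (stated just after \eqref{eq:commute}); second, the two-sided orbit transforms under $\sig$ by $P\lam^{\sig}P = (P\lam P)^{\sig} = \set{\mu^{\sig}}{\mu \in P\lam P}$, which is the displayed identity immediately preceding this lemma and follows from $(x\lam y)^{\sig} = y^{-\sig}\lam^{\sig}x^{-\sig}$.

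First I would unwind the left-hand side: for every $x \in P$,
\[
(\hxi_{\lam})^{\sig}(x) = \hxi_{\lam}(x^{\sig}) = \frac{|P\lam|}{|P\lam P|} \sum_{\mu \in P\lam P} \mu(\VPsi(x^{\sig})) = \frac{|P\lam|}{|P\lam P|} \sum_{\mu \in P\lam P} \mu(\VPsi(x)^{\sig}).
\]
Now $\mu(\VPsi(x)^{\sig}) = \mu((\VPsi(x))^{\sig})$, and by the definition \eqref{eq:action3} of the $\sig$-action on $\jc$ this equals $\mu^{\sig^{-1}}(\VPsi(x))$; since $\sig$ has order $2$ on $\jc$, $\mu^{\sig^{-1}} = \mu^{\sig}$, so the term is $\mu^{\sig}(\VPsi(x))$. (Concretely: $\mu(a^{\sig}) = \mu^{\sig}(a)$ for all $a \in \CJ$ because $(a^{\sig})^{\sig} = a$, applied with $a = \VPsi(x)$.) Re-indexing the sum via the bijection $\mu \mapsto \mu^{\sig}$ of $P\lam P$ onto $(P\lam P)^{\sig} = P\lam^{\sig}P$, and using $|P\lam^{\sig}P| = |P\lam P|$ and $|P\lam^{\sig}| = |P\lam|$ (these orbit sizes are preserved by the bijection $\sig$), I obtain
\[
(\hxi_{\lam})^{\sig}(x) = \frac{|P\lam^{\sig}|}{|P\lam^{\sig} P|} \sum_{\nu \in P\lam^{\sig} P} \nu(\VPsi(x)) = \hxi_{\lam^{\sig}}(x)
\]
for all $x \in P$, which is exactly $(\hxi_{\lam})^{\sig} = \hxi_{\lam^{\sig}}$. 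Finally, by \refp{sch0} we have $\sch(P) = \set{\hxi_{\lam}}{\lam \in \jc}$, so $\hxi_{\lam^{\sig}} \in \sch(P)$, and therefore $(\hxi_{\lam})^{\sig}$ is a supercharacter of $P$.

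There is no serious obstacle here; the statement is essentially a bookkeeping identity. The only point that requires a moment's care is the direction of the action when passing $\sig$ from the argument $\VPsi(x)$ onto the character $\mu$ — one must use that $\sig$ is an involution on $\jc$, so the distinction between $\mu^{\sig}$ and $\mu^{\sig^{-1}}$ evaporates — and the routine check that the normalizing orbit-size ratios are unchanged because $\sig$ permutes $\jc$ bijectively. Everything else is a direct substitution using \eqref{eq:action3}, \eqref{eq:superch2}, and the compatibility $\VPsi(x^{\sig}) = \VPsi(x)^{\sig}$.
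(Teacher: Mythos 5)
Your proposal is correct and follows essentially the same route as the paper: unwind \eqref{eq:superch2}, use $\VPsi(x^{\sig}) = \VPsi(x)^{\sig}$, pass $\sig$ onto the characters, and re-index the sum over $(P\lam P)^{\sig} = P\lam^{\sig}P$, finally citing \refp{sch0}. The only superfluous step is your digression about $\mu^{\sig^{-1}}$ versus $\mu^{\sig}$: by the definition \eqref{eq:action3}, $\mu^{\sig}(a) = \mu(a^{\sig})$ directly, so $\mu(\VPsi(x)^{\sig}) = \mu^{\sig}(\VPsi(x))$ needs no appeal to involutivity.
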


\begin{proof}
If $x \in P$, then $\VPsi(x)^{\sig} = \VPsi(x^{\sig})$, and so we deduce that
\begin{align*}
(\hxi_{\lam})^{\sig}(x) &= \hxi_{\lam}(x^{\sig}) = \frac{|P\lam|}{|P\lam P|} \sum_{\mu \in P\lam P} \mu(\VPsi(x^{\sig})) = \frac{|P\lam|}{|P\lam P|} \sum_{\mu \in P\lam P} \mu(\VPsi(x)^{\sig}) \\ &= \frac{|P\lam|}{|P\lam P|} \sum_{\mu \in P\lam P} \mu^{\sig}(\VPsi(x)) = \frac{|P\lam|}{|P\lam P|} \sum_{\mu \in P\lam^{\sig} P} \mu(\VPsi(x)) = \hxi_{\lam^{\sig}}(x)
\end{align*}
as required.
\end{proof} 

We denote by $\sch_{\sig}(P)$ the subset of $\sch(P)$ consisting of all $\sig$-invariant supercharacters. The following result describes this subset; we recall that $\jcc = \set{\lam \in \jc}{\lam^{\sig} = \lam}$.

\begin{proposition} \label{prop:sch2}
Let $\CJ$ be a $\sig$-invariant nilpotent subalgebra of $\CA$, and let $P = 1+J$. Then, $\sch_{\sig}(P) = \set{\hxi_{\lam}}{\lam \in \jcc}$.
\end{proposition}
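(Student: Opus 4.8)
The plan is to combine \refl{sch1} with an application of Glauberman's Lemma, in the spirit of the proof of \refp{scl1}. One inclusion is immediate: if $\lam \in \jcc$ then $\lam^{\sig} = \lam$, so \refl{sch1} gives $(\hxi_{\lam})^{\sig} = \hxi_{\lam^{\sig}} = \hxi_{\lam}$ and hence $\hxi_{\lam} \in \sch_{\sig}(P)$; thus $\set{\hxi_{\lam}}{\lam \in \jcc} \sset \sch_{\sig}(P)$.

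For the reverse inclusion the first point is that $\lam \mapsto \hxi_{\lam}$ induces a bijection from the set of two-sided orbits on $\jc$ onto $\sch(P)$; in particular, $\hxi_{\lam} = \hxi_{\mu}$ if and only if $P\lam P = P\mu P$. (This follows by combining \refp{sch0} with the one-to-one correspondence between $\sch(P)$ and the two-sided orbits recalled in \refs{super2}, using that $\nu \mapsto \nu^{2}$ is a bijection of $\jc$ which commutes with the action of $P \x P$.) Now let $\hxi_{\lam} \in \sch_{\sig}(P)$. By \refl{sch1} we get $\hxi_{\lam^{\sig}} = (\hxi_{\lam})^{\sig} = \hxi_{\lam}$, so $P\lam^{\sig}P = P\lam P$; since $P\lam^{\sig}P = (P\lam P)^{\sig}$, the two-sided orbit $\Ome = P\lam P$ is $\sig$-invariant. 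It therefore suffices to exhibit $\mu \in \Ome$ with $\mu^{\sig} = \mu$, for then $\mu \in \jcc$ by \refeq{jcc}, and $P\mu P = \Ome = P\lam P$ forces $\hxi_{\mu} = \hxi_{\lam}$.

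To produce such a $\mu$ I would run the argument of \refp{scl1} in this dual setting. The group $P \x P$ acts transitively on $\Ome$ by $(x,y)\cdot\nu = x\nu y\inv$, and $\sig$ acts on $P \x P$ as the automorphism $(x,y) \mapsto (x,y)^{\sig} = (y^{\sig},x^{\sig})$, of order dividing $2$. Using that $\sig$ is an anti-automorphism of $\CA$ and that $\nu^{\sig}(a) = \nu(-\sig(a))$, one verifies the compatibility
\begin{equation*}
\big((x,y)\cdot\nu\big)^{\sig} = (x,y)^{\sig}\cdot\nu^{\sig} \qquad \text{for all } (x,y) \in P \x P,\ \nu \in \jc .
\end{equation*}
Since $|P \x P|$ is a power of the odd prime $p$, it is coprime to $|\ger{\sig}|$; as $\Ome$ is $\sig$-invariant, Glauberman's Lemma (\cite[Lemma~13.8]{Isa1}) then yields a $\sig$-fixed element $\mu \in \Ome$, completing the proof.

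The one step that is not pure bookkeeping is the displayed compatibility. Expanding $\big((x,y)\cdot\nu\big)(a) = \nu(x\inv a y)$ and applying $\sig$, it comes down to the identity $\sig\big(x\inv\sig(a)y\big) = y^{-\sig} a x^{\sig}$ in $\CJ$ (with $z^{-\sig} = (z\inv)^{\sig}$), which is immediate from the anti-automorphism axioms together with $x^{\sig} = \sig(x\inv)$; it is precisely the character-theoretic mirror of the computation in the proof of \refp{scl1}.
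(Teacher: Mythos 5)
Your argument is correct and follows essentially the same route as the paper: the easy inclusion via Lemma \ref{lem:sch1}, then deducing that the two-sided orbit $P\lam P$ is $\sig$-invariant and applying Glauberman's Lemma to the transitive $P\x P$-action with the compatible automorphism $(x,y)\mapsto(y^{\sig},x^{\sig})$ to produce a $\sig$-fixed representative in $\jcc$. The only difference is that you spell out (via the squaring bijection and Proposition \ref{prop:sch0}) why $\hxi_{\lam}=\hxi_{\mu}$ forces $P\lam P=P\mu P$, a point the paper leaves implicit.
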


\begin{proof}
By the previous lemma, we see that $(\hxi_{\lam})^{\sig} = \hxi_{\lam^{\sig}} = \hxi_{\lam}$, and thus $\hxi_{\lam} \in \sch_{\sig}(P)$ for all $\lam \in \jcc$. Conversely, let $\mu \in \jc$ be such that $\hxi_{\mu} \in \sch_{\sig}(P)$. Since $\hxi_{\mu} = (\hxi_{\mu})^{\sig} = \hxi_{\mu^{\sig}}$, we conclude that $\mu^{\sig} \in P\mu P$, and this clearly implies that the two-sided orbit $P\mu P$ is $\sig$-invariant. Now, we consider $\sig$ as the automorphism of $P\x P$ given by $(x,y)^{\sig} = (y^{\sig},x^{\sig})$ for all $x,y \in P$, and observe that $$((x,y)\cdot \nu)^{\sig} = (x\inv\nu y)^{\sig} = y^{-\sig}\nu^{\sig}x^{\sig} = (y^{\sig},x^{\sig}) \cdot \nu^{\sig} = (x,y)^{\sig} \cdot \nu^{\sig}$$ for all $x,y \in P$ and all $\nu \in \jc$. Thus, since $P\x P$ acts transitively on $P\mu P$ (and since $2 \nmid |P|$), Glauberman's Lemma (see \cite[Lemma~13.8]{Isa1}) implies that there exists $\lam \in P\mu P$ such that $\lam^{\sig} = \lam$. Since $\hxi_{\lam} = \hxi_{\mu}$, the result follows.
\end{proof}

As in the case of superclasses, it is natural to expect that supercharacters of $C_{P}(\sig)$ would be in one-to-one correspondence with $\sig$-invariant two-sided orbits of $P$ on $\CJ$, and in fact we shall prove that a given supercharacter is determined by the subset consisting of the $\sig$-fixed elements in the corresponding $\sig$-invariant two-sided orbit. For any $\lam \in \jcc$, we define $\Ome_{P}(\lam)$ to be the subset of $P\lam P$ consisting of all $\sig$-fixed elements.

\begin{proposition} \label{prop:fixed}
Let $\CJ$ be a $\sig$-invariant nilpotent subalgebra of $\CA$, and let $P = 1+\CJ$. Then, $$\Ome_{P}(\lam) = \set{x\inv \lam x^{\sig}}{x \in P}$$ for all $\lam \in \jcc$.
\end{proposition}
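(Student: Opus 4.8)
The statement is the precise analogue for the dual group $\jc$ of what \reft{scl2} asserts for $\CJ$, and the plan is to run exactly the same argument. Recall that, by definition, $\Ome_{P}(\lam) = \set{\mu \in P\lam P}{\mu^{\sig} = \mu}$. Since $\lam \in \jcc$ we have $\lam^{\sig} = \lam$, and hence, using the identity $(P\lam P)^{\sig} = P\lam^{\sig}P$ recorded just before \refl{sch1}, the two-sided orbit $P\lam P$ is $\sig$-invariant as a set and already contains the $\sig$-fixed point $\lam$; in particular $\Ome_{P}(\lam) \neq \emptyset$.

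The core of the argument is a coprime-action statement. View $\sig$ as the automorphism of $P \x P$ given by $(x,y) \mapsto (x,y)^{\sig} = (y^{\sig}, x^{\sig})$; since the $\sig$-action on $P$ is an involution, this automorphism has order (at most) two. With $P \x P$ acting on $\jc$ by $(x,y) \cdot \nu = x\nu y\inv$ --- the action whose orbits are the two-sided orbits $P\lam P$ --- one has the equivariance $((x,y) \cdot \nu)^{\sig} = (x,y)^{\sig} \cdot \nu^{\sig}$; this is exactly the computation already carried out in the proof of \refp{sch2} (it rests on $(x\nu y)^{\sig} = y^{-\sig}\nu^{\sig}x^{-\sig}$ for all $x,y \in P$). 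Since $P \x P$ acts transitively on $P\lam P$, since $|P \x P|$ is a power of the odd prime $p$ and hence coprime to $|\ger{\sig}|$, and since $P \x P$ is solvable, \cite[Corollary~13.9]{Isa1} applies: the set of $\sig$-fixed elements of $P\lam P$, that is, $\Ome_{P}(\lam)$, is a single orbit for the fixed-point subgroup $C_{P \x P}(\sig)$.

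It remains only to make this orbit explicit. A pair $(x,y) \in P \x P$ is fixed by $\sig$ precisely when $y^{\sig} = x$ (equivalently $x^{\sig} = y$), so $C_{P \x P}(\sig) = \set{(z, z^{\sig})}{z \in P}$. Acting on $\lam$ gives $(z,z^{\sig}) \cdot \lam = z\lam(z^{\sig})\inv = z\lam z^{-\sig}$; reindexing by $x = z\inv$ (which runs over $P$ as $z$ does) and using $(z^{\sig})\inv = (z\inv)^{\sig}$ turns this expression into $x\inv\lam x^{\sig}$, whence $\Ome_{P}(\lam) = \set{x\inv\lam x^{\sig}}{x \in P}$, as claimed. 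I do not expect any genuine obstacle: the whole proof is the mirror image of that of \reft{scl2}, and the only point demanding attention is keeping the left/right bookkeeping of the $P \x P$-action consistent so that the resulting orbit comes out in the stated form $x\inv\lam x^{\sig}$ rather than its reverse.
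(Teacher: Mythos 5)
Your proof is correct and follows essentially the same route as the paper: view $\sig$ as the order-two automorphism $(x,y)\mapsto(y^{\sig},x^{\sig})$ of $P\x P$, check equivariance with the two-sided action, apply \cite[Corollary~13.9]{Isa1} to see that the $\sig$-fixed elements of $P\lam P$ form a single $C_{P\x P}(\sig)$-orbit, and identify $C_{P\x P}(\sig)=\set{(z,z^{\sig})}{z\in P}$. The only difference is that you spell out the coprimality/transitivity hypotheses and the reindexing $x=z\inv$ explicitly, which the paper leaves implicit.
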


\begin{proof}
As before, we consider $\sig$ as an automorphism of $P\x P$. By \cite[Corollary~13.9]{Isa1}, the set of $\sig$-fixed elements of $P\lam P$ is an orbit under the action of $C_{P\x P}(\sig)$, and the result follows because $C_{P\x P}(\sig) = \set{(x,x^{\sig})}{x \in P}$.
\end{proof}

Next, we consider the {\it Glauberman correspondence} between $\sig$-invariant irreducible characters of $P$ and irreducible characters of $C_{P}(\sig)$; our main reference is \cite[Chapter~13]{Isa1}. Since $p$ is odd, this correspondence asserts that there exists a uniquely defined bijective map $$\map{\pi_{P}}{\irr_{\sig}(P)}{\irr(C_{P}(\sig))}$$ such that, for any $\hchi \in \irr_{\sig}(P)$, the image $\chi = \pi_{P}(\hchi)$ is the unique irreducible constituent of the restriction $\hchi_{C_{P}(\sig)}$ with odd multiplicity (see \cite[Theorem~13.1]{Isa1});  here, and henceforth, we denote by $\irr_{\sig}(P)$ the subset of $\irr(P)$ consisting of all $\sig$-invariant irreducible characters of $P$.

\begin{lemma} \label{lem:glaub}
Let $\CJ$ be a $\sig$-invariant nilpotent subalgebra of $\CA$, and let $P = 1+\CJ$. Let $\chi$ be any irreducible character of $C_{P}(\sig)$, let $\hchi \in \irr_{\sig}(P)$ be such that $\pi_{P}(\hchi) = \chi$, and let $\hxi \in \sch(P)$ be the unique supercharacter such that $\frob{\hchi}{\hxi} \neq 0$. Then, $\hxi^{\sig} = \hxi$, and in particular there exists $\lam \in \jcc$ such that $\hxi = \hxi_{\lam}$.
\end{lemma}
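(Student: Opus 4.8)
The plan is to exploit the fact, recorded just after \refeq{schar}, that every irreducible character of $P$ is a constituent of a unique supercharacter, together with the $\sig$-equivariance of the supercharacter construction established in \refl{sch1}. Since $\hchi \in \irr_{\sig}(P)$, the character $\hchi$ is $\sig$-invariant, and since $\hxi \in \sch(P)$ is an honest character of $P$ (by \cite[Theorems~5.4~and~5.6]{DiaIsa}), the hypothesis $\frob{\hchi}{\hxi} \neq 0$ says exactly that $\hchi$ is an irreducible constituent of $\hxi$. By the uniqueness recalled above, $\hxi$ is then the \emph{only} element of $\sch(P)$ having $\hchi$ as a constituent.

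The next step is to apply the automorphism $\sig$ and use this uniqueness. Because $x \mapsto x^{\sig}$ is a bijection of $P$, the induced action of $\sig$ on $\cf(P)$ preserves the Frobenius scalar product, i.e.\ $\frob{\psi^{\sig}}{\vphi^{\sig}} = \frob{\psi}{\vphi}$ for all $\psi, \vphi \in \cf(P)$. Hence $\frob{\hchi}{\hxi^{\sig}} = \frob{\hchi^{\sig}}{\hxi^{\sig}} = \frob{\hchi}{\hxi} \neq 0$, using $\hchi^{\sig} = \hchi$. By \refl{sch1}, $\hxi^{\sig}$ is again a supercharacter of $P$, and it has $\hchi$ as a constituent; by the uniqueness from the first paragraph we conclude $\hxi^{\sig} = \hxi$. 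Finally, \refp{sch2} identifies the $\sig$-invariant supercharacters as $\sch_{\sig}(P) = \set{\hxi_{\lam}}{\lam \in \jcc}$, so $\hxi = \hxi_{\lam}$ for some $\lam \in \jcc$, which is the assertion.

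The argument is essentially formal, so I do not anticipate a genuine obstacle; the only points deserving a word of justification are that a supercharacter is a true character (so that nonvanishing of the scalar product is equivalent to being a constituent) and that the sets $\irr_{\lam}(P)$ partition $\irr(P)$ — both of which are available from the summary at the start of \refs{super2}. Note also that the Glauberman correspondent $\chi = \pi_{P}(\hchi)$ plays no active role in this lemma beyond guaranteeing, via \cite[Theorem~13.1]{Isa1}, that $\hchi$ is $\sig$-invariant; the real content is simply that $\sig$ permutes $\sch(P)$ compatibly with the (unique) assignment of irreducible characters to supercharacters.
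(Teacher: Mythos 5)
Your argument is correct and is essentially the paper's own proof: the paper likewise observes that $\hchi = \hchi^{\sig}$ is a constituent of $\hxi^{\sig}$ and invokes the orthogonality (uniqueness) of supercharacters to get $\hxi^{\sig} = \hxi$, then \refp{sch2} to produce $\lam \in \jcc$. Your version merely makes explicit the $\sig$-invariance of the Frobenius product and the appeal to \refl{sch1}, which the paper leaves implicit.
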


\begin{proof}
This is an immediate consequence of the orthogonality of supercharacters because $\hchi = \hchi^{\sig}$ is an irreducible constituent of the supercharacter $\hxi^{\sig}$ of $P$.
\end{proof}

For any $\lam \in \jcc$, we write $X(\lam)$ to denote the set consisting of all irreducible characters $\chi \in \irr(C_{P}(\sig))$ such that Glauberman correspondent $\hchi \in \irr_{\sig}(P)$ of $\chi$ is a constituent of the supercharacter $\hxi_{\lam} \in \sch_{\sig}(P)$, and define
\begin{equation} \label{eq:sig2}
\sig_{\lam} = \sum_{\chi \in X(\lam)} \chi(1) \chi;
\end{equation}
notice that this is precisely the character $\sig_{X(\lam)}$ of $C_{P}(\sig)$ defined in \refeq{sig1}.

\begin{theorem} \label{thm:super1}
Let $\CJ$ be a $\sig$-invariant nilpotent subalgebra of $\CA$, and let $P = 1+\CJ$. Then, $\bset{X(\lam)}{\lam \in \jcc}$ is a partition of $\irr(C_{P}(\sig))$; in particular, every irreducible character $\chi \in \irr(C_{P}(\sig))$ is a constituent of $\sig_{\lam}$ for some $\lam \in \jcc$. Furthermore, for every $\lam, \mu \in \jcc$ we have $\sig_{\lam} = \sig_{\mu}$ if and only if $\mu \in \Ome_{P}(\lam)$.
\end{theorem}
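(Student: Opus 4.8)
The plan is to deduce the three assertions from the following four facts, established in this order: (i) every $\chi\in\irr(C_P(\sig))$ lies in $X(\lam)$ for some $\lam\in\jcc$; (ii) if $X(\lam)\cap X(\mu)\neq\emptyset$ then $X(\lam)=X(\mu)$; (iii) $X(\lam)=X(\mu)$ if and only if $\mu\in\Ome_P(\lam)$; and (iv) $X(\lam)\neq\emptyset$ for every $\lam\in\jcc$. Indeed, (i), (ii) and (iv) together say exactly that $\bigl\{X(\lam):\lam\in\jcc\bigr\}$ is a partition of $\irr(C_P(\sig))$, and the ``in particular'' clause is immediate since $\chi$ occurs in $\sig_\lam=\sum_{\psi\in X(\lam)}\psi(1)\psi$ with coefficient $\chi(1)>0$ whenever $\chi\in X(\lam)$. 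For the last sentence, this same positivity shows that $\sig_\lam$, being a combination of pairwise distinct irreducible characters with strictly positive coefficients, has support exactly $X(\lam)$; hence $\sig_\lam=\sig_\mu$ if and only if $X(\lam)=X(\mu)$, and then (iii) finishes the proof.

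I would prove (i)--(iii) as follows. For (i), given $\chi$ let $\hchi\in\irr_\sig(P)$ be its Glauberman correspondent and let $\hxi$ be the unique supercharacter of $P$ having $\hchi$ as a constituent (uniqueness by \refeq{schar}); by \refl{glaub}, $\hxi=\hxi_\lam$ for some $\lam\in\jcc$, so $\chi\in X(\lam)$. For (ii), if $\chi\in X(\lam)\cap X(\mu)$ then the Glauberman correspondent of $\chi$ is a constituent of both $\hxi_\lam$ and $\hxi_\mu$, so these two supercharacters coincide, and since $X(\lam)$ depends only on the supercharacter $\hxi_\lam$ we get $X(\lam)=X(\mu)$. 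For (iii), the identity $(x\nu y)^2=x\nu^{2}y$ shows that squaring maps the two-sided orbit $P\nu P$ bijectively onto $P\nu^{2}P$; combining this with \refp{sch0} gives $\hxi_\lam=\hxi_\mu$ if and only if $P\lam P=P\mu P$, which, since $\lam,\mu$ are $\sig$-fixed and, by \refp{fixed}, $\Ome_P(\lam)$ is the set of $\sig$-fixed points of $P\lam P$, is in turn equivalent to $\mu\in\Ome_P(\lam)$; and $\hxi_\lam=\hxi_\mu$ is equivalent to $X(\lam)=X(\mu)$ exactly as in (ii).

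The main obstacle is (iv): I must show that each $\sig$-invariant supercharacter $\hxi_\lam$ has a $\sig$-invariant irreducible constituent. Here I would argue by a parity count. By \refp{sch0}, $\hxi_\lam=\hchi_\nu$ with $\nu^{2}=\lam$, and since squaring is an automorphism of $\jc$ commuting with the $\sig$-action, $\nu\in\jcc$. Evaluating \refeq{schar} at the identity yields $\sum_{\hphi\in\irr_\nu(P)}\hphi(1)^{2}=n_\nu\,\hchi_\nu(1)=n_\nu\,|P\nu|=|P\nu P|$. Now $P\nu P$ is an orbit of $P\times P$, a $p$-group, so $|P\nu P|$ is a power of the odd prime $p$ and hence odd; and every $\hphi(1)^{2}$ is odd because $\hphi(1)$ divides the $p$-power $|P|$. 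Therefore the number $|\irr_\nu(P)|$ of irreducible constituents of $\hxi_\lam$ is odd. Since $\lam\in\jcc$ the supercharacter $\hxi_\lam$ is $\sig$-invariant, so $\sig$ permutes $\irr_\nu(P)$; as $\ger{\sig}$ has order $1$ or $2$ and $\irr_\nu(P)$ has odd cardinality, $\sig$ fixes some $\hphi\in\irr_\nu(P)$. Then $\hphi\in\irr_\sig(P)$ is a constituent of $\hxi_\lam$, so $\pi_P(\hphi)\in X(\lam)$ and $X(\lam)\neq\emptyset$, as required.
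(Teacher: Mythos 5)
Your proof is correct and, for the covering and disjointness parts, follows essentially the paper's own route: your step (i) is \refl{glaub} together with the uniqueness in \refeq{schar}, step (ii) is the orthogonality argument, and step (iii) makes explicit the appeal to \refp{sch0}, \refeq{jcc} and \refp{fixed} with which the paper concludes. What you add is step (iv), an explicit proof that $X(\lam)\neq\emptyset$ for every $\lam\in\jcc$, i.e.\ that each $\sig$-invariant supercharacter of $P$ has a $\sig$-invariant irreducible constituent; the paper leaves this point implicit, although under its convention that parts of a partition are non-empty (and for the implication $\sig_{\lam}=\sig_{\mu}\Rightarrow\mu\in\Ome_{P}(\lam)$ not to degenerate when two of the sets could a priori be empty) it is genuinely needed, and your parity count is a clean way to settle it: the sum of the squared degrees of the constituents of $\hxi_{\lam}$ is $|P\nu P|$ with $\nu^{2}=\lam$, an odd power of $p$, all degrees are odd, so the number of constituents is odd and the involution $\hphi\mapsto\hphi^{\sig}$ must fix one of them. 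One cosmetic remark: the direction $X(\lam)=X(\mu)\Rightarrow\hxi_{\lam}=\hxi_{\mu}$ inside your step (iii) already presupposes non-emptiness, so (iv) should logically be established before (iii); since the proof of (iv) does not use (iii), this is only a matter of reordering, not a gap.
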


\begin{proof}
By the previous lemma, it is clear that $\irr(C_{P}(\sig))$ is the union $$\irr(C_{P}(\sig)) = \bigcup_{\lam \in \jcc} X(\lam).$$ To show that this union is disjoint, let $\chi \in X(\lam) \cap X(\mu)$ for $\lam, \mu \in \jcc$, and let $\hchi \in \irr_{\sig}(P)$ be such that $\chi = \pi_{P}(\hchi)$. Then, $\hchi$ is a common irreducible constituent of the supercharacters $\hxi_{\lam}, \hxi_{\mu} \in \sch(P)$, and thus $\hxi_{\lam} = \hxi_{\mu}$ (by the orthogonality of supercharacters). It follows that $\mu \in P\lam P$, and the result is now a consequence of \refeq{jcc} and \refp{fixed}.
\end{proof}

As a consequence of this theorem, we see that $$X(\lam) = \set{\pi_{P}(\hchi)}{\hchi \in \irr_{\sig}(P),\ \frob{\hchi}{\hxi_{\lam}} \neq 0}$$ for all $\lam \in \jcc$. Furthermore, the theorem suggests that, if $\CX = \bset{X(\lam)}{\lam \in \jcc}$ and $\CK = \scl(C_{P}(\sig))$ (as in \refeq{scl}), then the pair $(\CX,\CK)$ forms a supercharacter theory for the $\sig$-fixed subgroup $C_{P}(\sig)$; alternatively, we may define
\begin{equation} \label{eq:sch}
\sch(C_{P}(\sig)) = \set{\sig_{\lam}}{\lam \in \jcc}
\end{equation}
as the set of supercharacters of $C_{P}(\sig)$. Further evidence is given by the following result.

\begin{theorem} \label{thm:scf1}
Let $\CJ$ be a $\sig$-invariant nilpotent subalgebra of $\CA$, and let $P = 1+\CJ$. Then, the sets
\begin{itemize}
\item $\CX = \bset{X(\lam)}{\lam \in \jcc}$,
\item $\sch(C_{P}(\sig)) = \set{\sig_{\lam}}{\lam \in \jcc}$, and
\item $\scl(C_{P}(\sig)) = \set{\HCK \cap C_{P}(\sig)}{\HCK \in \scl_{\sig}(P)}$
\end{itemize}
have the same cardinality.
\end{theorem}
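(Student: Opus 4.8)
The three sets are indexed in increasingly coarse ways by $\jcc$: the set $\CX$ and $\sch(C_{P}(\sig))$ are literally indexed by $\jcc$ with $X(\lam) = X(\mu)$ (equivalently $\sig_{\lam} = \sig_{\mu}$) exactly when $\mu \in \Ome_{P}(\lam)$, by \reft{super1}; while $\scl(C_{P}(\sig))$ is indexed by $\scl_{\sig}(P)$, i.e.\ by $\sig$-invariant two-sided orbits $P a P$ with $a \in \CJ$. So the first two cardinalities agree trivially: the map $\lam \mapsto X(\lam)$ and the map $\lam \mapsto \sig_{\lam}$ both factor through the same equivalence relation on $\jcc$ (namely $\mu \sim \lam \iff \mu \in \Ome_{P}(\lam) \iff P\mu P = P\lam P$), and $\CX$, $\sch(C_{P}(\sig))$ are in bijection with the quotient. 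The real content is therefore to show that the number of $\sig$-invariant two-sided orbits of $P$ on $\CJ$ equals the number of $\sig$-invariant two-sided orbits of $P$ on $\jc$ (after matching $\jcc$ with the $\sig$-fixed orbits on $\jc$ via \refeq{jcc}).

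**Key steps.** First I would record that, by \reft{super1}, $|\CX| = |\sch(C_{P}(\sig))| = $ the number of equivalence classes of $\jcc$ under $\mu \sim \lam \iff \mu \in \Ome_{P}(\lam)$, and by \refp{fixed} this is the number of orbits of the $P$-action $x\cdot\lam = x\inv\lam x^{\sig}$ on $\jcc$; equivalently (via \refeq{jcc} identifying $\jcc$ with the $\sig$-fixed points of $\jc$) it is the number of $\sig$-invariant two-sided orbits of $P$ on $\jc$, i.e.\ $|\sch_\sig(P)|$ by \refp{sch2}. Symmetrically, $|\scl(C_{P}(\sig))| = |\scl_\sig(P)|$, the number of $\sig$-invariant two-sided orbits of $P$ on $\CJ$, by the bijection noted after \refp{scl1}. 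So the theorem reduces to: the number of $\sig$-invariant $P\x P$-orbits on $\CJ$ equals the number of $\sig$-invariant $P\x P$-orbits on $\jc$. To prove this, I would exploit that $\jc$ is the Pontryagin dual of the finite abelian group $\CJ^{+}$, and that the two-sided $P$-orbits on $\CJ$ and on $\jc$ are in a known bijection (this is the superclass/supercharacter duality underlying the standard algebra-group supercharacter theory — in \cite{DiaIsa}, $|\scl(P)| = |\sch(P)|$). The $\sig$-action is compatible with this duality: $\sig$ acts on $\CJ$ by $a \mapsto a^\sig = -\sig(a)$ and on $\jc$ by $\lam \mapsto \lam^\sig$, and by \refeq{action3} these are adjoint, so $\sig$ permutes the orbits on $\CJ$ and on $\jc$ compatibly with the duality bijection. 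Hence the duality bijection restricts to a bijection between $\sig$-invariant orbits on $\CJ$ and $\sig$-invariant orbits on $\jc$, giving $|\scl_\sig(P)| = |\sch_\sig(P)|$.

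**Alternative, and the main obstacle.** If one prefers to avoid quoting the precise form of the orbit duality from \cite{DiaIsa}, there is a cleaner Brauer-permutation-lemma argument. The group $P\x P$ acts on both $\CJ$ and $\jc = \widehat{\CJ^+}$, and $\jc$ is the dual action to the $P\x P$-action on $\CJ^{+}$ (since $x\lam y$ pairs with $\CJ$ via $(x\lam y)(a) = \lam(x\inv a y\inv)$). By Brauer's permutation lemma, for any automorphism — here $\sig$ — commuting appropriately with the action, the number of $\sig$-fixed $P\x P$-orbits on $\CJ^{+}$ equals the number of $\sig$-fixed $P\x P$-orbits on its dual $\jc$. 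That gives $|\scl_\sig(P)| = |\sch_\sig(P)|$ directly. Either way, the three-set chain closes:
\[
|\CX| = |\sch(C_{P}(\sig))| = |\sch_\sig(P)| = |\scl_\sig(P)| = |\scl(C_{P}(\sig))|.
\]
The main obstacle is purely bookkeeping: making precise that $\sig$ (as $a \mapsto -\sig(a)$ on $\CJ$, hence as an automorphism of $\CJ^+$) is exactly dual to $\sig$ on $\jc$ (as $\lam \mapsto \lam^\sig$), and that both commute with the $P\x P$-action up to the twist $(x,y)\mapsto(y^\sig,x^\sig)$ used throughout — so that "$\sig$-invariant orbit on $\CJ$" and "$\sig$-invariant orbit on $\jc$" are matched by the permutation lemma. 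Everything else is assembling \reft{super1}, \refp{sch2}, \refp{fixed}, and the bijection after \refp{scl1}.
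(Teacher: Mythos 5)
Your handling of the first equality is the same as the paper's (both follow immediately from \reft{super1}); the genuine content is the comparison with $|\scl(C_{P}(\sig))|$, and here your route differs from the paper's even though both ultimately rest on Brauer's permutation lemma. The paper stays in the ``folded'' picture: it lets $P$ act on $C_{\CJ}(\sig)$ by $x\cdot a = xax^{-\sig}$ and on $\jcc$ (viewed as the dual of $C_{\CJ}(\sig)^{+}$) by $x\cdot\lam = x\lam x^{-\sig}$, notes $(x\cdot\lam)(x\cdot a) = \lam(a)$, and applies \cite[Theorem~6.32]{Isa1} to equate the two permutation characters of $P$, hence the two orbit counts, which are $|\scl(C_{P}(\sig))|$ and $|\CX|$ by \reft{scl2} and \reft{super1}. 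You instead work ``upstairs'': you convert both counts into $|\scl_{\sig}(P)|$ and $|\sch_{\sig}(P)|$ via the Glauberman-based facts (\refp{scl1}, \refp{sch2}, \refp{fixed}) and then compare $\sig$-invariant two-sided orbits on $\CJ$ with those on $\jc$. Two caveats on that comparison. First, your initial suggestion to transport $\sig$-invariance through ``the duality bijection'' between two-sided orbits on $\CJ$ and on $\jc$ should be dropped: \cite{DiaIsa} proves only an equality of cardinalities, not a canonical orbit-to-orbit bijection whose $\sig$-equivariance one could check. Second, Brauer's lemma as such compares fixed-point numbers of elements acting on a finite abelian group and on its dual; to extract ``equal numbers of $\sig$-invariant $P\x P$-orbits'' you must pass to the extension of $P\x P$ by $\sig$ (using the twist $(x,y)\mapsto(y^{\sig},x^{\sig})$) and apply the coset-averaging orbit-counting lemma to the coset $(P\x P)\sig$, or equivalently Burnside for the induced order-two action on the two orbit sets; this is exactly the bookkeeping you flag, and it does go through since $\sig$ and $P\x P$ act on $\CJ^{+}$ by additive automorphisms with the contragredient actions on $\jc$. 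So your alternative argument is correct; it is a bit longer than the paper's folded application of Brauer, but it yields the intermediate statement $|\scl_{\sig}(P)| = |\sch_{\sig}(P)|$, which the paper's proof never needs and does not record.
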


\begin{proof}
By the previous theorem, it is obvious that $|\CX| = |\sch(C_{P}(\sig))|$. To prove of the other equality, we consider the action of $P$ on $C_{\CJ}(\sig)$ given by $x\cdot a = xax^{-\sig}$ for all $x \in P$ and all $a \in C_{\CJ}(\sig)$, and denote by $\Omega$ the set consisting of all orbits of $P$ on $C_{\CJ}(\sig)$; notice that $|\Omega| = |\scl(C_{P}(\sig))|$ (by \reft{scl2}). On the other hand, we also consider the contragradient action of $P$ on the dual group $\jcc$ given by $x\cdot \lam = x \lam x^{-\sig}$ or all $x \in P$ and all $\lam \in \jcc$, and denote by $\Omega^{\circ}$ the set consisting of all orbits of $P$ on $\jcc$. By \reft{super1} we have $|\Omega^{\circ}| = |\CX|$, and thus we must prove that $|\Omega| = |\Omega^{\circ}|$. To see this, let $\tau$ be the permutation character of $P$ on $C_{\CJ}(\sig)$; hence, $\tau(x) = |\set{a \in C_{\CJ}(\sig)}{x\cdot a = a}|$ for all $x \in P$. Since $(x\cdot\lam)(x\cdot a) = \lam(a)$ for all $x \in P$, all $\lam \in \jcc$ and all $a \in C_{\CJ}(\sig)$, it follows from Brauer's Theorem (\cite[Theorem~6.32]{Isa1}) that $\tau(x) = |\set{\lam \in \jcc}{x\cdot\lam = \lam}|$ for all $x \in P$, and thus $\tau$ is also the permutation character of $P$ on $\jcc$. By \cite[Corollary~5.15]{Isa1}, we conclude that $|\Omega| = \frob{\tau}{1_{P}} = |\Omega^{\circ}|$ as required.
\end{proof}

Thus, in order to establish that we have a genuine supercharacter theory for $C_{P}(\sig)$ only one thing remains: we must show that for every $\lam \in \jcc$, the (super)character $\sig_{\lam}$ is a superclass function. This will be a consequence of the following main result which gives a convenient way to compute the values of a supercharacter.

\begin{theorem} \label{thm:super2}
Let $\CJ$ be a $\sig$-invariant nilpotent subalgebra of $\CA$, let $P = 1+\CJ$, and let $\lam \in \jcc$. Then,
\begin{equation} \label{eq:sig3}
\sig_{\lam}(x) = \sum_{\mu \in \Ome_{P}(\lam)} \mu(\VPsi(x))
\end{equation}
for all $x \in C_{P}(\sig)$. In particular, $\sig_{\lam}$  has a constant value on each superclass of $C_{P}(\sig)$.
\end{theorem}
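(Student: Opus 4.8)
The plan is to compute the value $\sig_{\lam}(x)$ by relating it, via the Glauberman correspondence, to values of the $\sig$-invariant supercharacter $\hxi_{\lam}$ of $P$ on the coset $C_{P}(\sig)$, and then to use the explicit formula \refeq{superch2} together with the description of $\Ome_{P}(\lam)$ in \refp{fixed}. Concretely, fix $\lam \in \jcc$ and write $\hxi = \hxi_{\lam}$. By \reft{super1} (and the remark following it), the irreducible constituents of $\hxi$ are exactly the Glauberman correspondents $\hchi$ of the characters $\chi \in X(\lam)$, each appearing with multiplicity $\hchi(1)/n_{\lam}$ (after normalisation, as in \refeq{schar}). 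The first step is therefore to record how the Glauberman correspondence interacts with the decomposition: for $\hchi \in \irr_{\sig}(P)$ with correspondent $\chi = \pi_{P}(\hchi)$, one has the key numerical fact (a standard consequence of \cite[Chapter~13]{Isa1}) that, when restricted to $C_{P}(\sig)$ and paired against the "character-like" class function coming from $\sig$-invariant data, the contribution of $\hchi$ is governed by $\chi$. The cleanest route is to show directly that the class function on $C_{P}(\sig)$ defined by the right-hand side of \refeq{sig3}, call it $\eta_{\lam}(x) = \sum_{\mu \in \Ome_{P}(\lam)} \mu(\VPsi(x))$, coincides with $\sig_{\lam}$.

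The main mechanism I would use is the following. By \refp{fixed}, $\Ome_{P}(\lam) = \set{z\inv\lam z^{\sig}}{z\in P}$, which is a single orbit for $C_{P\x P}(\sig) \cong P$ acting by $z\cdot\lam = z\inv\lam z^{\sig}$; hence $\eta_{\lam}$ depends only on the $P$-orbit of $\lam$ in $\jcc$, and by \reft{scl2} (applied on the dual side, exactly as in the proof of \reft{scf1}) $\eta_{\lam}$ is constant on the superclasses $\VPhi(\Ome_{P}(a))$ of $C_{P}(\sig)$ — this already gives the "in particular" clause once the main identity is known, but it also shows $\eta_{\lam}$ is a superclass function a priori. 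Next, I would compute the inner product $\frob{\eta_{\lam}}{\chi}$ for $\chi \in \irr(C_{P}(\sig))$. Lifting via $\VPsi$ and using that $\VPhi$ carries the $P$-orbit $\Ome_P(a)$ in $C_{\CJ}(\sig)$ onto the superclass of $C_P(\sig)$, the sum $\sum_{\mu\in\Ome_P(\lam)}\mu(\VPsi(x))$ is recognised as (a multiple of) the restriction to $C_P(\sig)$ of the $\sig$-invariant supercharacter $\hxi_\lam$ of $P$, written in the Cayley coordinates of \refeq{superch2}. Then Glauberman's theory — specifically the fact that $\hxi_\lam|_{C_P(\sig)}$ and $\sum_{\chi\in X(\lam)}\chi(1)\chi$ have the same irreducible constituents with matching odd-multiplicity parities, combined with the orthogonality of supercharacters used in \refl{glaub} — forces $\eta_{\lam} = \sig_{\lam}$.

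The step I expect to be the main obstacle is the precise bookkeeping in passing from $P$ to $C_{P}(\sig)$: one must verify that summing $\mu(\VPsi(x))$ over the $\sig$-fixed subset $\Ome_{P}(\lam)$ of the two-sided orbit $P\lam P$ (rather than over all of $P\lam P$, as in \refeq{superch2}) produces exactly the character $\sig_\lam$ and not merely a class function proportional to it, and that the normalising factor $|P\lam|/|P\lam P|$ disappears correctly. This is where the cardinality count of \reft{scf1} — that the number of $P$-orbits on $\jcc$ equals the number of $P$-orbits on $C_{\CJ}(\sig)$ — is used: it guarantees that the linear span of the $\eta_\lam$ has the right dimension, so that showing $\frob{\eta_\lam}{\eta_\mu}$ has the expected "diagonal" behaviour (zero unless $\mu\in\Ome_P(\lam)$, as in \reft{super1}) together with $\frob{\eta_\lam}{\chi}\in\Z_{\ge 0}$ for all $\chi\in\irr(C_P(\sig))$ pins down $\eta_\lam=\sig_\lam$ on the nose. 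The remaining verifications — that $\VPsi(x^\sig)=\VPsi(x)^\sig$, that $\mu^\sig=\mu$ for $\mu\in\Ome_P(\lam)$ so the sum is real-valued, and that $\VPhi$ intertwines the $P$-actions — are routine and follow from \refeq{commute} and the computations already made in \refl{sch1} and \reft{scl2}.
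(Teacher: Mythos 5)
The heart of the theorem is the multiplicity statement, and that is exactly what your argument does not deliver. Writing $\eta_{\lam}(x) = \sum_{\mu \in \Ome_{P}(\lam)} \mu(\VPsi(x))$, you correctly observe that $\eta_{\lam}$ is a superclass function, but neither mechanism you offer for identifying $\eta_{\lam}$ with $\sig_{\lam}$ works. First, $\eta_{\lam}$ is not (a multiple of) the restriction of $\hxi_{\lam}$ to $C_{P}(\sig)$: by \refeq{superch2}, $\hxi_{\lam}(x)$ sums $\mu(\VPsi(x))$ over the whole two-sided orbit $P\lam P$, whereas $\eta_{\lam}$ sums only over its $\sig$-fixed subset $\Ome_{P}(\lam)$, and a partial sum over a subset of the orbit has no reason to be proportional to the full sum. (It is proportional when $\hxi_{\lam}$ has a linear constituent, because then $P\lam P = \lam + \CL(\lam)\ort$ and $\Ome_{P}(\lam) = \lam + C_{\CL(\lam)}(\sig)\ort$; that is essentially the computation in \reft{super3}, and nothing of the sort is available for a general $\lam \in \jcc$.) Second, Glauberman's theorem only controls, for each $\hchi \in \irr_{\sig}(P)$, the single odd-multiplicity constituent of $\hchi_{C_{P}(\sig)}$; it says nothing about the remaining constituents of $(\hxi_{\lam})_{C_{P}(\sig)}$, so ``matching odd-multiplicity parities'' forces nothing. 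Finally, orthogonality of the $\eta_{\lam}$'s, the count in \reft{scf1}, and nonnegativity of the numbers $\frob{\eta_{\lam}}{\chi}$ cannot pin down $\eta_{\lam} = \sig_{\lam}$: these facts do not even determine which irreducible characters occur in $\eta_{\lam}$, let alone that each $\chi \in X(\lam)$ occurs with multiplicity exactly $\chi(1)$ and no $\chi \notin X(\lam)$ occurs at all. That multiplicity formula is the entire content of the theorem, and your outline contains no argument for it.

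What is actually needed is a reduction to the linear-constituent case. The paper takes an arbitrary $\sig$-invariant irreducible constituent $\hchi$ of $\hxi_{\lam}$ and, by \cite[Theorem~2.1]{And4}, writes $\hchi = \htet^{P}$ and $\chi = \tet^{C_{P}(\sig)}$ for a $\sig$-invariant linear character $\htet$ of a $\sig$-invariant algebra subgroup $Q = 1+\CI$, where $\chi = \pi_{P}(\hchi)$ and $\tet = \pi_{Q}(\htet)$. Then \reft{super3} (proved via Gallagher's theorem) settles \refeq{sig3} for the supercharacter of $Q$ attached to $\lam_{0} = \lam_{\CI}$, and \refp{lin}, together with an expansion of the restriction $(\vsig_{\lam})_{C_{Q}(\sig)}$ in the orthogonal basis of superclass functions of $C_{Q}(\sig)$ and Frobenius reciprocity, gives $\frob{\chi}{\vsig_{\lam}} = |\set{\mu \in \Ome_{P}(\lam)}{\mu_{C_{\CI}(\sig)} = \lam_{C_{\CI}(\sig)}}| = |C_{\CI}(\sig)\ort| = \chi(1)$. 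Some argument of this type --- transporting the problem to a smaller algebra group where the linear case applies and then counting the $\sig$-fixed characters in $\Ome_{P}(\lam)$ with the prescribed restriction --- is indispensable; without it, what you have established is only that $\eta_{\lam}$ is a superclass function, which falls well short of the theorem.
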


The proof of this theorem will be the main goal of the next two sections. Once the theorem is proved, then we can define for every $\lam \in \jcc$ the \textit{supercharacter} of $C_{P}(\sig)$ associated with $\lam$ to be the function $\map{\vsig_{\lam}}{C_{P}(\sig)}{\C}$ by the rule
\begin{equation} \label{eq:zet2}
\vsig_{\lam}(x) = \sum_{\mu \in \Ome_{P}(\lam)} \mu(\VPsi(x))
\end{equation}
for all $x \in C_{P}(\sig)$; notice that $\vsig_{\lam}$ depends only on the orbit $\Ome_{P}(\lam) = \set{x\inv \lam x^{\sig}}{x \in P}$ where we consider the action of $P$ on the left of $\jcc$ given by $x\cdot \lam = x\inv \lam x^{\sig}$ for all $x \in P$ and all $\lam \in \jcc$. It is clear that $\vsig_{\lam}(yxy\inv) = \vsig_{\lam}(x)$ for all $x,y \in C_{P}(\sig)$, and hence $\vsig_{\lam}$ is a class function of $C_{P}(\sig)$. Since $\irr(C_{P}(\sig))$ is a $\C$-basis of $\cf(C_{P}(\sig))$, it follows that $\vsig_{\lam}$ it is a $\C$-linear combination of the irreducible characters of $C_{P}(\sig)$. Our aim is to prove that $\vsig_{\lam}$ is a character of $C_{P}(\sig)$, and this occurs if and only if it is a linear combination of irreducible characters with positive integer coeficients. In fact, \reft{super2} claims that $\vsig_{\lam} = \sig_{\lam} = \sum_{\chi \in X(\lam)} \chi(1) \chi$, and thus we must prove that an irreducible character $\chi \in \irr(C_{P}(\sig))$ appears in the class function $\vsig_{\lam} \in \cf(C_{P}(\sig))$ (with non-zero multiplicity) if and only if its Glauberman correspondent $\hchi \in \irr_{\sig}(P)$ appears in the supercharacter $\hxi_{\lam} \in \sch(P)$ (with non-zero multiplicity); moreover, if this is the case, then we must also show that the multiplicity $\frob{\chi}{\sig_{\lam}}$ equals the degree $\chi(1)$ of $\chi$. To achieve this, we recall that by \refeq{schar} (see also \cite[Theorem~5.5(ii)]{DiaIsa} and \refp{sch0}) we have $$n_{\lam}\hxi_{\lam} =\sum_{\hchi \in \irr_{\lam}(P)} \hchi(1) \hchi$$ where $n_{\lam} = |P\lam P|/|P\lam|$; furthermore, it follows from \cite[Theorem~5.6]{DiaIsa} (and from \refp{sch0}) that $n_{\lam}\hxi_{\lam} = \hsig_{\lam}$ where $\map{\hsig_{\lam}}{P}{\C}$ is the function defined by the rule
\begin{equation} \label{eq:sig4}
\hsig_{\lam}(x) = \sum_{\mu \in P\lam P} \mu(\VPsi(x))
\end{equation}
for all $x \in P$. On the other hand, if $\hchi \in \irr_{\sig}(\lam)$ is an arbitrary $\sig$-invariant irreducible constituent of $\hxi_{\lam}$, then \cite[Theorem~2.1]{And4} asserts that there exist a $\sig$-invariant algebra subgroup $Q$ of $P$ and a $\sig$-invariant linear character $\htet \in \irr_{\sig}(Q)$ such that $\hchi = \htet^{P}$ and $\chi = \tet^{C_{P}(\sig)}$ where $\chi = \pi_{P}(\hchi) \in \irr(C_{P}(\sig))$ and $\tet = \pi_{Q}(\htet) \in \irr(C_{Q}(\sig))$; given any $\sig$-invariant subgroup $Q$ of $P$, we write $\pi_{Q}$ to denote the Glauberman map $\map{\pi_{Q}}{\irr_{\sig}(Q)}{\irr(C_{Q}(\sig))}$. By the above (and by Frobenius reciprocity), we have $\hchi(1) = \frob{\chi}{n_{\lam} \hxi_{\lam}} = \frob{\htet^{P}}{\hsig_{\lam}} = \frob{\htet}{(\hsig_{\lam})_{Q}}$. By \cite[Theorem~6.4]{DiaIsa}, the restriction $(\hxi_{\lam})_{Q}$ decomposes as a sum of supercharacters of $Q$, and hence $(\hsig_{\lam})_{Q} = n_{\lam}(\hxi_{\lam})_{Q}$ also decomposes as a sum of supercharacters of $Q$. It follows that there exists a unique supercharacter $\hxi_{0} \in \sch(Q)$ such that $\hxi_{0}$ is a constituent of $(\hsig_{\lam})_{Q}$ and $\htet$ is a constituent of $\hxi_{0}$. In light of this reduction process, we will first prove \reft{super2} in the more favourable situation where the supercharacter $\hxi_{\lam}$ has a linear constituent.
 

\section{Supercharacters with a linear constituent} \label{sec:linear}

As before, let $\CJ$ be a $\sig$-invariant nilpotent subalgebra of $\CA$, and let $P = 1+\CJ$. Our aim is to prove \reft{super2} in the particular situation where $\lam \in \jcc$ is such that $\hxi_{\lam} \in \sch(P)$ has a linear constituent. We start by recalling some general facts about the supercharacter $\hxi_{\lam}$. We define $$\CL(\lam) = \set{a\in \CJ}{a\CJ \sset \ker(\lam)}\quad \text{and}\quad L(\lam) = 1+\CL(\lam).$$ Then, $\CL(\lam)$ is a right ideal (hence, a subalgebra) of $\CJ$, and thus $L(\lam)$ is an algebra subgroup of $P$; notice that $L(\lam) = \set{x \in P}{x\lam = \lam}$ is the centralizer of $\lam$ with respect to the left action of $P$ on $\CJ$. The mapping $x \mapsto \lam(x-1)$ clearly defines a linear character $\map{\htau_{\lam}}{L(\lam)}{\cx}$, and it is proved in \cite[Theorems~5.4~and~5.6]{DiaIsa} that $\hchi_{\lam} = (\htau_{\lam})^{P}$; recall that we are writing $\hchi_{\lam}$ for the (super)character of $P$ defined by \refeq{superch1}. [In particular, we conclude that $\hchi_{\lam}$ is indeed a character of $P$.] Next, we prove that the supercharacter $\hxi_{\lam}$ is also induced from a linear character of the subgroup $L(\lam)$. In fact, since the Cayley transform $\map{\VPhi}{\CJ}{P}$ clearly maps $\CL(\lam)$ to $L(\lam)$ bijectively, we may define the function $\map{\htet_{\lam}}{L(\lam)}{\cx}$ by the rule
\begin{equation} \label{eq:lin1}
\htet_{\lam}(x) = \lam(\VPsi(x))
\end{equation}
for all $x \in L(\lam)$. Then, we obtain the following result (where we are not assuming that the supercharacter $\hxi_{\lam} \in \sch(P)$ has a linear constituent).

\begin{proposition} \label{prop:lin1}
Let $\CJ$ be a $\sig$-invariant nilpotent subalgebra of $\CA$, and let $P = 1+\CJ$. Then, for every $\lam \in \jc$ the function $\htet_{\lam}$ is a linear character of $L(\lam)$, and we have $\hxi_{\lam} = (\htet_{\lam})^{P}$.
\end{proposition}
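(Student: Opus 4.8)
The plan is to reduce the statement to the corresponding fact for the ordinary supercharacter $\hchi_{\mu}$ --- that $\htau_{\mu}$ is a linear character of $L(\mu)$ and $\hchi_{\mu} = (\htau_{\mu})^{P}$, which is \cite[Theorems~5.4~and~5.6]{DiaIsa} --- together with the identity $\hchi_{\mu} = \hxi_{\mu^{2}}$ of \refp{sch0}. Since $p$ is odd, multiplication by $2$ is an automorphism of the additive group $\CJ^{+}$, so $\lam \mapsto \lam^{2}$ (recall $\lam^{2}(a) = \lam(2a)$) is a bijection of $\jc$; I let $\mu \in \jc$ be the unique character with $\mu^{2} = \lam$, equivalently $\mu(a) = \lam(2\inv a)$ for all $a \in \CJ$, where $2\inv$ is the inverse of $2$ in $\k$. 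Note that $2\inv$ lies in the prime field, so scalar multiplication by $2\inv$ on the $\k$-space $\CJ$ coincides with the corresponding endomorphism of the abelian group $\CJ^{+}$; I use this throughout without further comment.

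First I would show $L(\mu) = L(\lam)$. From $\lam(a) = \mu(2a)$ one obtains $\ker(\lam) = \set{a \in \CJ}{2a \in \ker(\mu)} = 2\inv\ker(\mu) = \ker(\mu)$, the last equality because $\ker(\mu)$ is a subgroup of $\CJ^{+}$ and $2$ is invertible modulo $p$; hence $\CL(\lam) = \CL(\mu)$ and $L(\lam) = L(\mu)$. Next I would check that $\htet_{\lam}$ and $\htau_{\mu}$ agree on this common subgroup. For $x = 1+a$ with $a \in \CL(\lam)$ we have $\VPsi(x) = a(2+a)\inv$, and expanding $(2+a)\inv = 2\inv - (2\inv)^{2}a + (2\inv)^{3}a^{2} - \cdots$ (a finite sum, as $a$ is nilpotent) gives $\VPsi(x) - 2\inv a \in a\CJ \sset \ker(\lam)$; therefore
\[
\htet_{\lam}(x) = \lam(\VPsi(x)) = \lam(2\inv a) = \mu(a) = \mu(x-1) = \htau_{\mu}(x) .
\]
Since $\htau_{\mu}$ is a linear character of $L(\mu) = L(\lam)$, it follows that $\htet_{\lam}$ is one as well, which is the first assertion of the proposition.

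The induction formula then follows by combining the pieces:
\[
(\htet_{\lam})^{P} = (\htau_{\mu})^{P} = \hchi_{\mu} = \hxi_{\mu^{2}} = \hxi_{\lam} ,
\]
where the second equality is \cite[Theorems~5.4~and~5.6]{DiaIsa} and the third is \refp{sch0}. I do not expect a genuine obstacle here; the only points requiring care are the bookkeeping between the scalar $2\inv \in \k$ and ``halving'' in $\CJ^{+}$ (harmless, since $2\inv$ already lies in the prime field), and the observation that the error term $\VPsi(x) - 2\inv(x-1)$ lies in the right ideal $a\CJ$, not merely in $\CJ$ --- it is exactly this, via the hypothesis $a \in \CL(\lam)$, that lets $\lam$ annihilate it.
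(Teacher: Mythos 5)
Your proof is correct and follows essentially the same route as the paper: both pick $\mu \in \jc$ with $\mu^{2} = \lam$, identify $\CL(\lam) = \CL(\mu)$ (hence $L(\lam) = L(\mu)$), show $\htet_{\lam} = \htau_{\mu}$ on this subgroup using that $\lam$ annihilates $a\CJ$ for $a \in \CL(\lam)$, and then conclude $(\htet_{\lam})^{P} = (\htau_{\mu})^{P} = \hchi_{\mu} = \hxi_{\lam}$ from \cite[Theorems~5.4~and~5.6]{DiaIsa} and \refp{sch0}. The only cosmetic difference is that you check the agreement at $x = 1+a$ by expanding $\VPsi(1+a) = a(2+a)\inv$, whereas the paper evaluates at $x = \VPhi(a)$ and uses the invariance $\mu(ax) = \mu(a)$ for $x \in P$.
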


\begin{proof}
By the definition of $\CL(\lam)$, it is clear that $\lam(ax) = \lam(a)$ for all $a \in \CL(\lam)$ and all $x \in P$. On the other hand, let $\mu \in \jc$ be such that $\lam = \mu^{2}$. Then, $\lam(a) = \mu(2a)$ for all $a \in \CJ$, and thus $\mu(ax) = \mu(a)$ for all $a \in \CL(\lam)$ and all $x \in P$; in fact, we have $\CL(\mu) = \CL(\lam)$. In particular, we deduce that $$\htet_{\lam}(\VPhi(a)) = \lam(a) = \mu(2a) = \mu(2a(1-a)\inv) = \mu(\VPhi(a)-1)$$ for all $a \in \CL(\lam)$, and thus $\htet_{\lam}(x) = \htau_{\mu}(x)$ for all $x \in L(\lam)$. It follows that $\htet_{\lam}$ is a linear character of $L(\lam)$, and that $(\htet_{\lam})^{P} = (\htau_{\mu})^{P} = \hchi_{\mu} = \hxi_{\lam}$ (by \refp{sch0}).
\end{proof}

Under our assumption that $\hxi_{\lam} \in \sch(P)$ has a linear constituent, \cite[Corollary~5.12]{DiaIsa} assures that $\CL(\lam)$ is a two-sided ideal of $\CJ$, and hence $L(\lam)$ is a normal subgroup of $P$; furthermore, we have $P\lam = \lam P = P\lam P$, and thus $$\hxi_{\lam}(x) = \hsig_{\lam}(x) = \sum_{\mu \in P\lam P} \mu(\VPsi(x))$$ for all $x \in \CJ$. On the other hand, we observe that the subgroup $L(\lam)$ is $\sig$-invariant: in fact, since $\lam$ is $\sig$-invariant, we have $(x\lam)^{\sig} = \lam x^{\sig}$ for all $x \in P$, and thus $L(\lam)^{\sig} = L(\lam)$ (again by \cite[Corollary~5.12]{DiaIsa}). We now consider the $\sig$-fixed subgroup $C_{L(\lam)}(\sig)$, and note that $C_{L(\lam)}(\sig) = \VPhi(C_{\CL(\lam)}(\sig))$ where $C_{\CL(\lam)}(\sig) = \set{a \in C_{\CJ}(\sig)}{a^{\sig} = a}$. We define the linear character $\map{\tet_{\lam}}{C_{L(\lam)}(\sig)}{\cx}$ to be the restriction of $\htet_{\lam}$ to $C_{L(\lam)}(\sig)$; hence,
\begin{equation} \label{eq:lin2}
\tet_{\lam}(x) = \lam(\VPsi(x))
\end{equation}
for all $x \in C_{L(\lam)}(\sig)$. Furthermore, we define $\xi_{\lam}$ to be the induced character
\begin{equation} \label{eq:suplin}
\xi_{\lam} = (\tet_{\lam})^{C_P(\sigma)}.
\end{equation}
The following result is a simple consequence of \cite[Theorem~13.29]{Isa1}; we recall that $L(\lam)$ is a normal subgroup of $P$.

\begin{lemma} \label{lem:sch2}
Let $\CJ$ be a $\sig$-invariant nilpotent subalgebra of $\CA$, let $P = 1+\CJ$, and let $\lam \in \jcc$ be such that the supercharacter $\hxi_{\lam} \in \sch(P)$ has a linear constituent. Let $\chi \in \irr(C_{P}(\sig))$, and let $\hchi \in \irr_{\sig}(P)$ be such that $\pi_{P}(\hchi) = \chi$. Then, $\frob{\chi}{\xi_{\lam}} \neq 0$ if and only if $\frob{\hchi}{\hxi_{\lam}} \neq 0$.
\end{lemma}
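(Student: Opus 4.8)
The plan is to deduce Lemma~\ref{lem:sch2} from the Glauberman correspondence together with the fact, established in \refp{lin1} and the paragraph following it, that both $\hxi_{\lam}$ and $\xi_{\lam}$ are induced from linear characters of the \emph{normal} subgroup $L(\lam)$ of $P$ (respectively of its $\sig$-fixed subgroup $C_{L(\lam)}(\sig)$). Since $\hxi_{\lam}$ has a linear constituent, $L(\lam) \trianglelefteq P$, and the induced character $\hxi_{\lam} = (\htet_{\lam})^{P}$ is therefore (by Clifford theory, or directly) a sum of $\sig$-invariant irreducible characters lying over the $P$-orbit of $\htet_{\lam}$, each of which restricts to $C_{P}(\sig)$ with the Glauberman correspondent appearing with odd multiplicity. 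First I would record that $\frob{\hchi}{\hxi_{\lam}} \neq 0$ is equivalent to $\hchi$ lying over $\htet_{\lam}$ (i.e.\ $\frob{(\hchi)_{L(\lam)}}{\htet_{\lam}} \neq 0$), by Frobenius reciprocity and the normality of $L(\lam)$. Symmetrically, $\frob{\chi}{\xi_{\lam}} \neq 0$ is equivalent to $\chi$ lying over $\tet_{\lam}$, the restriction of $\htet_{\lam}$ to $C_{L(\lam)}(\sig)$; here one uses that $C_{L(\lam)}(\sig) = C_{P}(\sig) \cap L(\lam)$ is normal in $C_{P}(\sig)$ (being the intersection of $C_{P}(\sig)$ with the normal subgroup $L(\lam)$), so again Frobenius reciprocity applies.

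The remaining and central point is then to show that, for $\hchi \in \irr_{\sig}(P)$ with $\pi_{P}(\hchi) = \chi$, one has $\hchi$ lies over $\htet_{\lam}$ if and only if $\chi$ lies over $\tet_{\lam}$. This is exactly the compatibility of the Glauberman correspondence with ``lying over a $\sig$-invariant normal subgroup and a $\sig$-invariant linear character'' of it, which is the content of \cite[Theorem~13.29]{Isa1}: that theorem says that if $N \trianglelefteq G$ is $\sig$-invariant with $(|N|,2)=1$ and $\htet \in \irr_{\sig}(N)$, then $\pi_{G}$ carries the characters in $\irr_{\sig}(G)$ lying over $\htet$ bijectively onto the characters of $C_{G}(\sig)$ lying over $\pi_{N}(\htet)$, and in our situation $N = L(\lam)$, $G = P$, and $\htet = \htet_{\lam}$ is already linear, so $\pi_{N}(\htet_{\lam}) = \tet_{\lam}$ (the restriction to the fixed subgroup — for a linear character the Glauberman correspondent is just the restriction, as $C_{L(\lam)}(\sig)$ has odd index-free behaviour and $(\htet_{\lam})_{C_{L(\lam)}(\sig)}$ is irreducible). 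I would invoke this theorem directly: since $2 \nmid |P|$, a fortiori $2 \nmid |L(\lam)|$, and both $L(\lam)$ and $\htet_{\lam}$ are $\sig$-invariant (the former shown in the text via \cite[Corollary~5.12]{DiaIsa}, the latter because $\lam \in \jcc$ and $\VPsi$ commutes with $\sig$, giving $\htet_{\lam}^{\sig} = \htet_{\lam}$).

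Thus the chain of equivalences is: $\frob{\chi}{\xi_{\lam}} \neq 0$ iff $\chi$ lies over $\tet_{\lam}$ iff (by \cite[Theorem~13.29]{Isa1}) $\hchi = \pi_{P}\inv(\chi)$ lies over $\htet_{\lam}$ iff $\frob{\hchi}{\hxi_{\lam}} \neq 0$. I would write the proof in essentially this three-line form, citing \refp{lin1} for $\hxi_{\lam} = (\htet_{\lam})^{P}$ and $\xi_{\lam} = (\tet_{\lam})^{C_{P}(\sig)}$ (the latter is the definition \eqref{eq:suplin}), Frobenius reciprocity with the normality of $L(\lam)$ in $P$ and of $C_{L(\lam)}(\sig)$ in $C_{P}(\sig)$, and \cite[Theorem~13.29]{Isa1} for the middle equivalence. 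The main obstacle, such as it is, is bookkeeping: making sure the hypotheses of \cite[Theorem~13.29]{Isa1} are exactly met (oddness, $\sig$-invariance of $L(\lam)$ and of $\htet_{\lam}$, and that $\pi_{L(\lam)}(\htet_{\lam}) = \tet_{\lam}$ for the linear character $\htet_{\lam}$), all of which are either already in the surrounding text or immediate from the definitions; there is no genuinely hard computation.
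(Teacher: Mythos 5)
Your proof is correct and takes essentially the same route as the paper, which likewise obtains the lemma as a direct consequence of \cite[Theorem~13.29]{Isa1} applied to the $\sig$-invariant normal subgroup $L(\lam)$, using $\hxi_{\lam} = (\htet_{\lam})^{P}$, $\xi_{\lam} = (\tet_{\lam})^{C_{P}(\sig)}$, Frobenius reciprocity, and the fact that the Glauberman correspondent of the linear character $\htet_{\lam}$ is its restriction $\tet_{\lam}$. (The only blemish is your incidental remark that all irreducible constituents of $\hxi_{\lam}$ are $\sig$-invariant, which need not hold, but it plays no role in your chain of equivalences.)
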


We are now able to prove the following particular case of \reft{super2}.

\begin{theorem} \label{thm:super3}
Let $\CJ$ be a $\sig$-invariant nilpotent subalgebra of $\CA$, let $P = 1+\CJ$, and let $\lam \in \jcc$ be such that the supercharacter $\hxi_{\lam} \in \sch(P)$ has a linear constituent. Then, $$\sig_{\lam}(x) = \sum_{\mu \in \Ome_{P}(\lam)} \mu(\VPsi(x))$$ for all $x \in C_{P}(\sig)$. Moreover, we have $\sig_{\lam} = \xi_{\lam} = (\tet_{\lam})^{C_{P}(\sig)}$.
\end{theorem}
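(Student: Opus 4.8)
The plan is to show the two formulas separately but derive them from the same setup. First I would establish the character identity $\sig_{\lam} = \xi_{\lam} = (\tet_{\lam})^{C_P(\sig)}$, and then compute the right-hand side $(\tet_{\lam})^{C_P(\sig)}(x)$ explicitly by the induced-character formula to recover the orbit sum $\sum_{\mu \in \Ome_P(\lam)} \mu(\VPsi(x))$. For the first identity, recall from \refl{sch2} that $\frob{\chi}{\xi_{\lam}} \neq 0$ if and only if $\frob{\hchi}{\hxi_{\lam}} \neq 0$, where $\hchi = \pi_P\inv(\chi)$; this already shows that $\xi_{\lam}$ and $\sig_{\lam}$ involve exactly the same set $X(\lam)$ of irreducible constituents. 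What remains is to match multiplicities: I must show $\frob{\chi}{\xi_{\lam}} = \chi(1)$ for every $\chi \in X(\lam)$. Since $L(\lam)$ is normal in $P$ (using the linear-constituent hypothesis via \cite[Corollary~5.12]{DiaIsa}) and $\htet_{\lam}$ is $\sig$-invariant, the cleanest route is to invoke \cite[Theorem~13.29]{Isa1} (the same tool behind \refl{sch2}), which under the Glauberman correspondence relates the induced character $(\htet_{\lam})^P = \hxi_{\lam}$ to $(\tet_{\lam})^{C_P(\sig)} = \xi_{\lam}$ and, crucially, preserves the relevant multiplicities so that $\frob{\chi}{\xi_{\lam}} = \frob{\hchi}{\hxi_{\lam}}$ whenever both are $\sig$-invariant situations; combined with $\frob{\hchi}{n_{\lam}\hxi_{\lam}} = \hchi(1)$ from \refeq{schar} and a degree-counting argument (comparing $\xi_{\lam}(1) = [C_P(\sig):C_{L(\lam)}(\sig)]$ against $\sum_{\chi\in X(\lam)}\chi(1)^2$ via $\hxi_{\lam}(1) = [P:L(\lam)] = |P\lam|$ and $n_\lam = 1$ in the normal case) forces $\frob{\chi}{\xi_\lam} = \chi(1)$. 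This gives $\xi_{\lam} = \sum_{\chi \in X(\lam)} \chi(1)\chi = \sig_{\lam}$.

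For the orbit-sum formula, I would expand $\xi_{\lam}(x) = (\tet_{\lam})^{C_P(\sig)}(x)$ using the standard induction formula: $\xi_{\lam}(x) = \frac{1}{|C_{L(\lam)}(\sig)|}\sum_{z \in C_P(\sig)} \dot\tet_{\lam}(z\inv x z)$, where $\dot\tet_\lam$ is $\tet_\lam$ extended by zero off $C_{L(\lam)}(\sig)$. Using $\tet_{\lam}(y) = \lam(\VPsi(y))$ from \refeq{lin2}, the conjugation-equivariance $\VPsi(z\inv x z) = z\inv\VPsi(x)z$ (as $\VPsi$ is a rational function of $x$), and the fact that $z\inv x z \in C_{L(\lam)}(\sig)$ exactly when $\VPsi(z\inv x z) = z\inv \VPsi(x) z \in C_{\CL(\lam)}(\sig)$, I would rewrite the sum over $z$ in terms of the left action $z\cdot\lam = z\lam z^{-\sig}$ on $\jcc$. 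The point is that $\lam(z\inv \VPsi(x) z) = (z\inv \lam z)(\VPsi(x))$, and since $\VPsi(x) \in C_{\CJ}(\sig)$ and we only care about values on $\sig$-fixed elements, $z\inv\lam z$ acts there the same as $z\inv \lam z^{\sig} \in \Ome_P(\lam)$ (using $\lam = \lam^\sig$). Each element $\mu = z\inv\lam z^{\sig} \in \Ome_P(\lam)$ is hit exactly $|C_{L(\lam)}(\sig)|$ times by the orbit-stabilizer count (the stabilizer of $\lam$ under $z \mapsto z\inv\lam z^{\sig}$ inside $C_P(\sig)$ is precisely $C_{L(\lam)}(\sig)$, since $L(\lam) = \{x : x\lam = \lam\}$ and $\sig$-invariance of $\lam$ makes this stabilizer the $\sig$-fixed points of $L(\lam)$), so the normalizing factor $1/|C_{L(\lam)}(\sig)|$ cancels and we get exactly $\sum_{\mu\in\Ome_P(\lam)}\mu(\VPsi(x))$. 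The last sentence of the theorem — constancy on superclasses — is then immediate from \reft{scl2}, since the superclass of $x$ is $\VPhi(\Ome_P(\VPsi(x)))$ and the orbit sum is manifestly constant on $\{z\cdot\VPsi(x) : z \in P\}$ because $\Ome_P(\lam)$ is $\sig$-stable and the pairing $\mu(\VPsi(x))$ is $P$-invariant under the simultaneous action.

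The main obstacle I anticipate is the multiplicity-matching step: proving $\frob{\chi}{\xi_{\lam}} = \chi(1)$ rather than merely identifying the constituent sets. \refl{sch2} only gives the qualitative "nonzero iff nonzero" statement, so the quantitative refinement needs either a careful application of the full strength of \cite[Theorem~13.29]{Isa1} (which does track the induced-character decomposition compatibly with Glauberman, at least when the kernel group behaves well under $\sig$ — here $L(\lam)$ normal and $\sig$-invariant) or, alternatively, a dimension/degree count: once we know $\xi_\lam$ and $\sig_\lam$ have the same constituents and we know $\xi_\lam(1) = [C_P(\sig):C_{L(\lam)}(\sig)]$, we can try to show this degree equals $\sum_{\chi\in X(\lam)}\chi(1)^2$ by relating it through Glauberman (which preserves certain degree ratios in the nilpotent/odd-order setting) to $\hxi_\lam(1)\cdot(\text{something}) = |P\lam|$, forcing the coefficient of each $\chi$ to be exactly $\chi(1)$ since a nonnegative-integer-combination of the $\chi$'s with those constituents and that "norm-squared" is uniquely $\sum\chi(1)\chi$. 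Getting this bookkeeping precise — and confirming that the relevant Isaacs results apply verbatim with $\sig$ acting as the automorphism of odd-order-complement type — is where the real care is needed; everything else is a direct computation with the Cayley transform and the induction formula.
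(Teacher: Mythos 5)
There is a genuine gap, and it is exactly at the point you yourself flag: matching the multiplicities. Neither of your two proposed routes closes it. The result behind \refl{sch2} (\cite[Theorem~13.29]{Isa1}) is purely qualitative -- it matches the \emph{sets} of constituents across the Glauberman correspondence and carries no multiplicity information; and even if some multiplicity-preserving version were available, it would relate $\frob{\chi}{\xi_{\lam}}$ to $\frob{\hchi}{\hxi_{\lam}} = \hchi(1)$ (note $n_{\lam}=1$ here), which is \emph{not} the required $\chi(1)$, since the Glauberman correspondence does not preserve degrees. The degree-count alternative is also insufficient: knowing the constituent set $X(\lam)$ together with the total degree $\xi_{\lam}(1)$ does not determine the coefficients (if $X(\lam)=\{\chi_{1},\chi_{2}\}$ with degrees $1$ and $3$, then $7\chi_{1}+\chi_{2}$ and $\chi_{1}+3\chi_{2}$ have equal degree and the same constituents). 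The paper closes this step with a different idea that is absent from your plan: since $\htet_{\lam}$ is $P$-invariant (\cite[Corollary~4.3]{AndNic}), any linear constituent $\htau$ of $\hxi_{\lam}$ satisfies $\htau_{L(\lam)}=\htet_{\lam}$, so its restriction $\tau$ to $C_{P}(\sig)$ is an \emph{extension} of $\tet_{\lam}$; Gallagher's theorem (\cite[Corollary~6.17]{Isa1}), applied to the normal subgroup $C_{L(\lam)}(\sig)$ of $C_{P}(\sig)$, then yields $(\tet_{\lam})^{C_{P}(\sig)}=\sum_{\ome}\ome(1)\,\tau\ome$, which is precisely the decomposition $\sum_{\chi\in X(\lam)}\chi(1)\chi$.

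Your derivation of the orbit-sum formula also has an unjustified step. You identify the induction-formula sum over $z\in C_{P}(\sig)$ with $\sum_{\mu\in\Ome_{P}(\lam)}\mu(\VPsi(x))$ by claiming that $z\mapsto z\inv\lam z$ hits each $\mu\in\Ome_{P}(\lam)$ exactly $|C_{L(\lam)}(\sig)|$ times. But $\Ome_{P}(\lam)$ is the orbit of the twisted action of the \emph{whole} group $P$ (it is a transitive set for $C_{P\x P}(\sig)=\set{(z,z^{\sig})}{z\in P}$, by \refp{fixed}), and in general it is a union of several $C_{P}(\sig)$-conjugation orbits -- exactly as the superclasses $\HCK\cap C_{P}(\sig)$ are unions of conjugacy classes. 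Nothing in your argument shows that the $C_{P}(\sig)$-orbit of $\lam$ exhausts $\Ome_{P}(\lam)$, nor that the conjugation stabilizer of $\lam$ in $C_{P}(\sig)$ is exactly $C_{L(\lam)}(\sig)$ (a priori it only contains it, and the stabilizer of $\lam$ under two-sided translation is not the same as under conjugation). The paper sidesteps this entirely by evaluating the two sides independently: from \cite[Lemma~4.2]{DiaIsa} and $P\lam P=P\lam=\lam P$ one gets $\Ome_{P}(\lam)=\lam+C_{\CL(\lam)}(\sig)\ort$, so the orbit sum is $\lam(\VPsi(x))$ times the regular character of $C_{\CJ}(\sig)/C_{\CL(\lam)}(\sig)$, hence vanishes off $C_{L(\lam)}(\sig)$ and equals $|C_{P}(\sig):C_{L(\lam)}(\sig)|\,\tet_{\lam}(x)$ on it; and the induced character $(\tet_{\lam})^{C_{P}(\sig)}$ takes these same values because $C_{L(\lam)}(\sig)$ is normal and $\tet_{\lam}$ is $C_{P}(\sig)$-invariant (again via the $P$-invariance of $\htet_{\lam}$). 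So both the multiplicity step and the orbit-sum step need repairs along these lines before your outline becomes a proof.
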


\begin{proof}
Let $x \in C_{P}(\sig)$ be arbitrary. We show that both members of the desired equality are equal to $0$ unless $x \in C_{L(\lam)}(\sig)$ in which case they are both equal to $|C_{P}(\sig) : C_{L(\lam)}(\sig)|\,\tet_{\lam}(x)$.

To start with, we observe that this is precisely the value $\xi_{\lam}(x) = (\tet_{\lam})^{C_{P}(\sig)}(x)$. In fact, since $L(\lam)$ is a normal subgroup of $P$, $C_{L(\lam)}(\sig) = L(\lam) \cap C_{P}(\sig)$ is a normal subgroup of $C_{P}(\sig)$, and thus $(\tet_{\lam})^{C_{P}(\sig)}(x) = 0$ whenever $x \notin C_{L(\lam)}(\sig)$. On the other hand, by \cite[Corollary~4.3]{AndNic} the linear character $\map{\htet_{\lam}}{L(\lam)}{\cx}$ is $P$-invariant, and so its restriction to $C_{L(\lam)}(\sig)$ is $C_{P}(\sig)$-invariant. It follows that $(\tet_{\lam})^{C_{P}(\sig)}(x) = |C_{P}(\sig):C_{L(\lam)}(\sig)|\,\tet_{\lam}(x)$ whenever $x \in C_{L(\lam)}(\sig)$.

Next, we show that
\begin{equation} \label{eq:supch2}
\xi_{\lam}(x) = \sum_{\mu \in \Ome_{P}(\lam)} \mu(\VPsi(x)).
\end{equation}
By \cite[Lemma~4.2]{DiaIsa}, we have $P\lam = \lam + \CL(\lam)\ort$ where $\CL(\lam)\ort = \set{\nu \in \jc}{\CL(\lam) \sset \ker(\nu)}$. Since $\Ome_{P}(\lam) = \set{\mu \in P\lam P}{\mu^{\sig} = \mu}$ (by \refp{fixed}) and since $P\lam P = P\lambda = \lam P$, we conclude that $\Ome_{P}(\lam) = \lam + \set{\nu \in \CL(\lam)\ort}{\nu^{\sig} =\nu}$. If $\nu \in \jc$, then $\nu^{\sig} = \nu$ if and only if $\nu \in \jcc$, and so $\set{\nu \in \CL(\lam)\ort}{\nu^{\sig} =\nu} = \jcc \cap \CL(\lam)\ort$; moreover, $\CL(\lam) \sset \ker(\nu)$ if and only if $C_{\CL(\lam)}(\sig) \sset \ker(\nu)$, and thus $\jcc \cap \CL(\lam)\ort = C_{\CL(\lam)}(\sig)\ort$ where we set $C_{\CL(\lam)}(\sig)\ort = \set{\nu \in \jcc}{C_{\CL(\lam)}(\sig) \sset \ker(\nu)}$. It follows that $\Ome_{P}(\lam) = \lam + C_{\CL(\lam)}(\sig)\ort$, and thus $$\sum_{\mu \in \Ome_{P}(\lam)} \mu(a) = \lam(a) \sum_{\nu \in C_{\CL(\lam)}(\sig)\ort} \nu(a)$$ where $a = \VPsi(x) \in C_{\CJ}(\sig)$. Since the sum $\sum_{\nu \in C_{\CL(\lam)}(\sig)\ort} \nu$ naturally identifies with the regular character of the additive group $C_{\CJ}(\sig)/C_{\CL(\lam)}(\sig)$, we conclude that $$\sum_{\mu \in \Ome_{P}(\lam)} \mu(a) = \begin{cases} 0, & \text{if $a \notin C_{\CL(\lam)}(\sig)$,} \\ |C_{\CJ}(\sig):C_{\CL(\lam)}(\sig)|\, \lam(a), & \text{if $a \in C_{\CL(\lam)}(\sig)$,} \end{cases}$$
and \refeq{supch2} follows because the Cayley transform $\map{\VPhi}{\CJ}{P}$ is bijective and maps $C_{\CJ}(\sig)$ to $C_{P}(\sig)$ and $C_{\CL(\lam)}(\sig)$ to $C_{L(\lam)}(\sig)$.

To conclude the proof, we apply Gallagher's Theorem (see \cite[Corollary~6.17]{Isa1}) to identify the irreducible constituents of $\xi_{\lam} = (\tet_{\lam})^{C_{P}(\lam)}$; we recall that $C_{L(\lam)}(\sig)$ is a normal subgroup of $C_{P}(\sig)$. We first claim that the linear character $\tet_{\lam}$ of $C_{L(\lam)}(\sig)$ extends to $C_{P}(\sig)$. To see this, let $\htau \in \irr(P)$ be a linear constituent of $\hxi_{\lam}$ (which exists by assumption), and let $\tau$ be its restriction to $C_{P}(\sig)$. (Notice that $\htau$ is not necessarily $\sig$-invariant, hence it may not be the Glauberman correspondent of $\tau$.) Since $\htet_{\lam}$ is $P$-invariant, we have $\htau_{L(\lam)} = \htet_{\lam}$, and hence $$\tau_{C_{L(\lam)}(\sig)} = (\htet_{\lam})_{C_{L(\lam)}(\sig)} = \tet_{\lam}.$$ Therefore, $\tau$ is an extension of $\tet_{\lam}$ to $C_{P}(\sig)$, and so  Gallagher's Theorem implies that $$\xi_{\lam} = (\tet_{\lam})^{C_{P}(\sig)} = \sum_{\begin{smallmatrix} \ome \in \irr(C_{P}(\sig)) \\ C_{L(\lam)}(\sig) \sset \ker(\ome) \end{smallmatrix}} \ome(1) (\tau\ome).$$ Finally, it easily seen from \refp{sch2} that $$X(\lam) = \set{\tau\ome}{\psi \in \irr(C_{P}(\sig)),\ C_{L(\lam)}(\sig) \sset \ker(\psi)},$$ and thus $$\xi_{\lam} = \sum_{\chi \in X(\lam)} \chi(1) \chi = \sig_{\lam}.$$

The proof is complete.
\end{proof} 


\section{Proof of \reft{super2}} \label{sec:proof}

Let $\CJ$ a $\sig$-invariant nilpotent subalgebra of $\CA$, and let $P = 1+\CJ$. Otherwise stated, we fix a linear character $\lam \in \jcc$ throughout the section. Our primary goal is to show that \refeq{sig3} holds, and we shall use the reduction process described before. We let $\hchi \in \irr_{\sig}(\lam)$ be an arbitrary $\sig$-invariant irreducible constituent of the supercharacter $\hxi_{\lam} \in \sch(P)$, and choose a $\sig$-invariant algebra subgroup $Q$ of $P$ and a $\sig$-invariant linear character $\htet$ of $Q$ such that $$\hchi = \htet^{P}\quad \text{and}\quad \chi = \tet^{C_{P}(\sig)}$$ where $\chi = \pi_{P}(\hchi) \in \irr(C_{P}(\sig))$ and $\tet = \pi_{Q}(\htet) \in \irr(C_{Q}(\sig))$ (the existence of $Q$ and $\htet$ is guaranted by \cite[Theorem~2.1]{And4}). Then, $\hchi(1) = \frob{\chi}{\hsig_{\lam}} = \frob{\htet}{(\hsig_{\lam})_{Q}}$, and thus there exists a unique supercharacter $\hxi_{0} \in \sch(Q)$ such that $\hxi_{0}$ is a constituent of $(\hsig_{\lam})_{Q}$ and $\htet$ is a constituent of $\hxi_{0}$; recall that the restriction $(\hsig_{\lam})_{Q} = n_{\lam}(\hxi_{\lam})_{Q}$ decomposes as a sum of supercharacters of $Q$ (by \cite[Theorem~6.4]{DiaIsa}). We now prove the following result (which holds for every algebra group).

\begin{proposition} \label{prop:lin}
Let $P = 1+\CJ$ be an algebra group over $\k$, and let $\hchi \in \irr(P)$ be an irreducible constituent of a supercharacter $\hxi \in \sch(P)$. Let $\CI$ be a subalgebra of $\CJ$, let $Q = 1+\CI$, and suppose that $\hchi = \htet^{P}$ for some a linear character $\htet$ of $Q$. Let $\hxi_{0} \in \sch(Q)$ be the unique supercharacter of $Q$ such that $\htet$ is a constituent of $\hxi_{0}$. Then:
\begin{enumerate}
\item $\hxi_{0}$ is a constituent of the restriction $\hxi_{Q}$ with multiplicity $\hchi(1)$.
\item There exists $\lam \in \jc$ such that $\hxi = \hxi_{\lam}$ and $\hxi_{0} = \hxi_{\lam_{0}}$ where $\lam_{0} = \lam_{\CI}$ is the restriction of $\lam$ to $\CI$.
\item If $\mu_{0} \in Q\lam_{0} Q$ and $\mu \in \jc$ is such that $\mu_{\CI} = \mu_{0}$, then $\mu + \CI^{\perp} \sset P\lam P$; in particular, the set $\set{\mu \in P\lam P}{\mu_{\CI} = \mu_{0}}$ has cardinality $|\CJ:\CI| = |P:Q|$.
\end{enumerate}
\end{proposition}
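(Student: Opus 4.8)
The statement has three intertwined parts, which I would establish in the order (2), (3), (1). The common tool is an explicit description of the restriction $\hxi|_{Q}$. Writing $\hxi=\hxi_{\lam}$ and using the Cayley‑transform formula $\hsig_{\lam}(x)=\sum_{\mu\in P\lam P}\mu(\VPsi(x))$ together with the fact that $\VPsi$ carries $Q=1+\CI$ into $\CI$ (since $\CI$ is a subalgebra), we obtain, for $x\in Q$,
$$\hsig_{\lam}(x)=\sum_{\mu_{0}\in r(P\lam P)}f(\mu_{0})\,\mu_{0}(\VPsi(x)),\qquad f(\mu_{0})=\bigl|\{\mu\in P\lam P:\mu|_{\CI}=\mu_{0}\}\bigr|,$$
where $r\colon\jc\to\CI^{\circ}$ is the (surjective) restriction map $\mu\mapsto\mu|_{\CI}$, with kernel $\CI\ort$, and $\CI^{\circ}$ is the dual group of $\CI^{+}$. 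Because $P\lam P$ is $P\times P$‑stable and $r$ intertwines the two $Q\times Q$‑actions, $r(P\lam P)$ is a union of two‑sided $Q$‑orbits and $f$ is constant along each such orbit $O$; hence $\hsig_{\lam}|_{Q}=\sum_{O\sset r(P\lam P)}f_{O}\,\hsig^{Q}_{O}$, which sharpens \cite[Theorem~6.4]{DiaIsa} by pinning down precisely which supercharacters of $Q$ occur.

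For (2): since $\hchi=\htet^{P}$, Frobenius reciprocity gives $\frob{\htet}{\hxi_{\lam}|_{Q}}=\frob{\hchi}{\hxi_{\lam}}\neq0$, so $\htet$ — and hence the unique supercharacter $\hxi_{0}$ of $Q$ containing it — is a constituent of $\hxi_{\lam}|_{Q}$; as distinct supercharacters of $Q$ share no irreducible constituent, the formula above forces $\hxi_{0}=\hxi^{Q}_{O}$ for some orbit $O\sset r(P\lam P)$. Replacing the representative $\lam$ by a $\mu\in P\lam P$ with $\mu|_{\CI}\in O$ (this changes neither $P\lam P$ nor $\hxi_{\lam}$) and putting $\lam_{0}=\lam|_{\CI}$ gives $\hxi^{Q}_{\lam_{0}}=\hxi_{0}$, which is (2). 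Moreover the multiplicity of the linear character $\htet$ in $\hxi_{0}$ is a positive integer equal to $1/n^{Q}_{\lam_{0}}$, so $n^{Q}_{\lam_{0}}=1$, i.e. $Q\lam_{0}=\lam_{0}Q=Q\lam_{0}Q$ and $\hsig^{Q}_{\lam_{0}}=\hxi_{0}$.

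For (3) I would first record the description
$$P\lam P=\lam+\bigl(\CL(\lam)\ort+\CR(\lam)\ort\bigr)=\lam+\bigl(\CL(\lam)\cap\CR(\lam)\bigr)\ort,\qquad\CR(\lam)=\{a\in\CJ:\CJ a\sset\ker\lam\},$$
which follows from \cite[Lemma~4.2]{DiaIsa} ($P\lam=\lam+\CL(\lam)\ort$, and symmetrically $\lam P=\lam+\CR(\lam)\ort$) once one observes that $x\inv\CJ=\CJ$ for $x\in P$, whence $\CR(x\lam)=\CR(\lam)$; then $P\lam P=\bigcup_{\mu\in P\lam}\mu P$ collapses a union of cosets to a single coset. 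Given this, a $Q\times Q$‑translation — permissible because $\lam_{0}=\lam|_{\CI}$, so any $\mu$ with $\mu|_{\CI}=\mu_{0}\in Q\lam_{0}Q$ can be moved to one restricting to $\lam_{0}$ — reduces the desired inclusion $\mu+\CI\ort\sset P\lam P$ to $\lam+\CI\ort\sset\lam+(\CL(\lam)\cap\CR(\lam))\ort$, equivalently to the containment $\CL(\lam)\cap\CR(\lam)\sset\CI$. This containment is the heart of the proof. Every $\mu\in P\lam P$ agrees with $\lam$ on $\CL(\lam)\cap\CR(\lam)$, so $\hsig_{\lam}$, and hence every irreducible constituent $\hchi$ of $\hxi_{\lam}$, restricts on $L(\lam)\cap R(\lam)=1+\bigl(\CL(\lam)\cap\CR(\lam)\bigr)$ (with $R(\lam)=1+\CR(\lam)$) to a multiple of the linear character $x\mapsto\lam(\VPsi(x))$; a Mackey analysis of $(\htet^{P})|_{L(\lam)\cap R(\lam)}$, using that a linear character of a subgroup induces to a positive multiple of a single linear character only if that subgroup is the whole group, then forces $L(\lam)\cap R(\lam)\sset Q^{g}$ for some $g\in P$. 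Since conjugation sends $\CL(\lam)\cap\CR(\lam)$ to $\CL(g\lam g\inv)\cap\CR(g\lam g\inv)$ (here one uses that $\CL(\lam)$ is a right ideal and $\CR(\lam)$ a left ideal of $\CJ$), one may then arrange — by choosing the representative in $P\lam P$ compatibly with the orbit requirement of (2), which is the delicate coordination point — that $\CL(\lam)\cap\CR(\lam)\sset\CI$. Granting the containment, $\mu_{0}+\CI\ort\sset P\lam P$, and counting gives $|\{\mu\in P\lam P:\mu|_{\CI}=\mu_{0}\}|=|\CI\ort|=|\CJ:\CI|=|P:Q|$.

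For (1): in $\hsig_{\lam}|_{Q}=\sum_{O}f_{O}\,\hsig^{Q}_{O}$, part (3) says the fibre over the orbit $O=Q\lam_{0}Q$ has size $f_{Q\lam_{0}Q}=|\CJ:\CI|=\hchi(1)$, while $\hsig^{Q}_{\lam_{0}}=\hxi_{0}$ (since $n^{Q}_{\lam_{0}}=1$); as supercharacters of $Q$ attached to distinct two‑sided orbits have disjoint irreducible constituents, $\hxi_{0}$ occurs in $\hxi_{Q}$ with multiplicity exactly $\hchi(1)$. The single genuine obstacle is the containment $\CL(\lam)\cap\CR(\lam)\sset\CI$: that is the one step where the hypothesis that $\hchi$ is induced from a \emph{linear} character of $Q$ — rather than merely being a constituent of some supercharacter — is really used.
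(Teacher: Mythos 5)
The fatal step is your structural claim $P\lam P=\lam+\bigl(\CL(\lam)\ort+\CR(\lam)\ort\bigr)$, on which your whole proof of (3) — and, since you only get the fibre count $f_{Q\lam_{0}Q}=|\CJ:\CI|$ from (3), also the multiplicity statement in (1) — rests. Two-sided orbits of an algebra group on $\jc$ are \emph{not} cosets in general. One-sided orbits are cosets, but of subgroups that move along the orbit: the right stabilizer of $x\lam$ does equal that of $\lam$, but the subgroup of which $(x\lam)P$ is a coset is governed by $\CL(x\lam)=x\CL(\lam)$ (not by the stabilizer), so the union $\bigcup_{\mu\in P\lam}\mu P$ does not collapse to a single coset. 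Concretely, take $P=\UT_{4}(\k)$, $\CJ=\fru_{4}(\k)$, and $\lam(a)=\tet(\alp a_{14})$ with $\alp\neq 0$ and $\tet$ a nontrivial character of $\k^{+}$. Writing $x\inv=1+b$, $y\inv=1+c$, one computes $(x\lam y)(a)=\tet\bigl(\alp(a_{14}+b_{12}a_{24}+b_{13}a_{34}+c_{24}a_{12}+c_{34}a_{13}+b_{12}c_{34}a_{23})\bigr)$, so on $P\lam P$ the coefficient of $a_{23}$ is forced to equal the product of the coefficients of $a_{24}$ and $a_{13}$ divided by $\alp$: the orbit is a ``quadric'', not a coset, and it is not contained in $\lam+\CL(\lam)\ort+\CR(\lam)\ort$, since $e_{23}\in\CL(\lam)\cap\CR(\lam)$ while the orbit contains elements (e.g.\ $b_{12}=c_{34}=1$) that are nontrivial on $e_{23}$. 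This $\lam$ gives an \emph{irreducible} supercharacter induced from a linear character of an algebra subgroup, so the hypotheses of the proposition are satisfied and your reduction of (3) to $\CL(\lam)\cap\CR(\lam)\sset\CI$ cannot be repaired; note also that, were your coset identity true, every fibre of the restriction map over $r(P\lam P)$ would be full, which is strictly stronger than (3) and is false in this example (fibres over points outside $Q\lam_{0}Q$ are proper subsets of cosets). The subsequent Mackey/conjugation step you flag as the ``delicate coordination point'' is in any case not carried out.

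The paper avoids any description of $P\lam P$ by running the argument in the opposite direction, and this is the ingredient you are missing: linearity of $\htet$ gives $Q\lam_{0}=\lam_{0}Q=Q\lam_{0}Q$, hence $\frob{\htet}{\hxi_{0}}=1$, while irreducibility of $\hchi=\htet^{P}$ gives, by Frobenius reciprocity, the \emph{exact} value $\frob{\htet}{(\hsig_{\lam})_{Q}}=\hchi(1)$ (you use this inner product only qualitatively, as nonvanishing, in (2)). Combined with the fact that $(\hsig_{\lam})_{Q}$ is a nonnegative integral combination of supercharacters of $Q$ \cite[Theorem~6.4]{DiaIsa}, this pins the coefficient of $\hxi_{0}$ at $\hchi(1)$; comparing your own restriction formula then shows each $\mu_{0}\in Q\lam_{0}Q$ has exactly $\hchi(1)$ preimages in $P\lam P$, and since $\hchi(1)=|P:Q|=|\CJ:\CI|=|\CI\ort|$ this count equals the size of the full fibre $\mu+\CI\ort$, forcing $\mu+\CI\ort\sset P\lam P$. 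So the multiplicity computation comes first and (3) is a counting consequence, rather than (3) feeding the multiplicity; your preliminary restriction formula and part (2) are fine and agree with the paper's argument.
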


\begin{proof}
Since $\htet$ is linear, \cite[Corollary~5.12]{DiaIsa} asserts that $Q\lam_{0} = \lam_{0}Q = Q\lam_{0}Q$, and thus $$\hxi_{0} = \sum_{\hchi_{0} \in \irr_{\lam_{0}}(Q)} \hchi_{0}(1) \hchi_{0}$$ (by \cite[Theorem~5.5(ii)]{DiaIsa}). Since $\hchi(1) = \frob{\htet}{(\hsig_{\lam})_{Q}}$ and $\frob{\htet}{\hxi_{0}} = \htet(1) = 1$, we conclude that $$(\hsig_{\lam})_{Q} = \hchi(1) \hxi_{0} + \hzet$$ where $\hzet$ is a sum of supercharacters of $Q$ all distinct from $\hxi_{0}$; in particular, we have $\frob{\htet}{\hzet} = 0$. By the definition of $\hsig_{\lam}$ and of $\hxi_{0}$ (see \refeq{superch2}), we deduce that $$\sum_{\mu \in P\lam P} \mu_{\CI} = \hchi(1) \sum_{\mu_{0} \in Q\lam_{0} Q} \mu_{0} + \mu'$$ where $\mu' $ is a character (not necessarily linear) of the additive group $\CI^{+}$ satisfying $\frob{\mu'}{\mu_{0}} = 0$ for all $\mu_{0} \in Q\lam_{0}Q$. It follows that every linear character $\mu_{0} \in Q\lam_{0}Q$ occurs with multiplicity $\hchi(1)$ in the sum of the left hand side, and hence the set $\set{\mu \in P\lam P}{\mu_{\CI} = \mu_{0}}$ has cardinality $\hchi(1)$. Since $$\hchi(1) = \htet^{P}(1) = |P:Q| = |\CJ:\CI| = |\CI^{\perp}|,$$ we conclude that $\mu + \CI^{\perp} \sset P\lam P$ for all $\mu \in P\lam P$ such that $\mu_{\CI} \in Q\lam_{0} Q$, and this completes the proof.
\end{proof}

We are now able to proceed with the proof of \reft{super2}.

\begin{proof}[Proof of \reft{super2}]
We assume that $\lam \in \jcc$, and let the notation be as above; without loss of generality, we may assume that $\hxi_{0} = \hxi_{\lam_{0}}$ is the supercharacter of $Q$ corresponding to the restriction $\lam_{0} = \lam_{\CI}$ of $\lam$ to $\CI = Q-1$. Let $\Ome_{Q}(\lam_{0}) = \set{x\inv \lam_{0} x^{\sig}}{x\in Q}$ be the subset of $Q\lam_{0}Q \sset \ic$ consisting of $\sig$-fixed elements, and consider the function $\map{\vsig_{0}}{C_{Q}(\sig)}{\C}$ given by $$\vsig_{0}(x) = \sum_{\mu_{0} \in \Ome_{Q}(\lam_{0})} \mu_{0}(\VPsi(x))$$ for all $x \in C_{Q}(\sig)$. Then, since $\htet \in \irr_{\sig}(Q)$ is a $\sig$-invariant linear constituent of the supercharacter $\hxi_{0} \in \sch(Q)$, \reft{super3} implies that $$\vsig_{0} = \sum_{\chi_{0} \in X(\lam_{0})} \chi_{0}(1) \chi_{0}$$ where $X(\lam_{0}) = \set{\pi_{Q}(\hchi_{0})}{\hchi_{0} \in \irr_{\sig}(Q),\ \frob{\hchi_{0}}{\hxi_{0}} \neq 0}$; in particular, $\tet = \pi_{Q}(\htet) \in \irr(C_{Q}(\sig))$ is a linear constituent of $\vsig_{0}$ occuring with multiplicity one. Our goal is to show that the irreducible character $\chi = \tet^{C_{P}(\sig)}$ appears as a constituent of $\vsig_{\lam}$ with multiplicity $$\chi(1) = |C_{P}(\sig) : C_{Q}(\sig)| = |C_{\CJ}(\sig) : C_{\CI}(\sig)| = |C_{\CI}(\sig)^{\perp}|$$ where $C_{\CI}(\sig)^{\perp} = \set{\nu \in \jcc}{C_{\CI}(\sig) \sset \ker(\nu)}$.

Firstly, observe that \reft{scl2} and \refp{fixed} clearly imply that for all $\nu_{0} \in \icc$ the function $\map{\vsig_{\nu_{0}}}{C_{Q}(\sig)}{\C}$ (defined as in \refeq{zet2}) is constant on each superclass of $C_{Q}(\sig)$; moreover, the proof of \refp{fixed} shows that $\set{\Ome_{Q}(\nu_{0})}{\nu_{0} \in \icc}$ is a partition of $\icc$. It follows that $\set{\vsig_{\nu_{0}}}{\nu_{0} \in \icc}$ is an orthogonal basis of the complex space space $\scf(C_{Q}(\sig))$ consisting of all superclass functions of $C_{Q}(\sig)$. Therefore, since the restriction $(\vsig_{\lam})_{C_{Q}(\sig)}$ of $\vsig_{\lam}$ to $C_{Q}(\sig)$ is clearly a superclass function on $C_{Q}(\sig)$, we conclude that there exist $\seq{\nu}{r} \in \icc$ and $\seq{z}{r} \in \C$ such that $(\vsig_{\lam})_{C_{Q}(\sig)} = z_{1}\vsig_{\nu_{1}} + \cdots + z_{r} \vsig_{\nu_{r}}$ where $\frob{\vsig_{\nu_{i}}}{\vsig_{\nu_{j}}} = 0$ for all $1 \leq i \neq j \leq r$; in other words, we have $$\sum_{\mu \in \Ome_{P}(\lam)} \mu_{C_{\CI}(\sig)} = z_{1} \sum_{\mu_{1} \in \Ome_{Q}(\nu_{1})} (\mu_{1})_{C_{\CI}(\sig)} + \cdots + z_{r} \sum_{\mu_{r} \in \Ome_{Q}(\nu_{r})} (\mu_{r})_{C_{\CI}(\sig)}$$ where the $Q$-orbits $\Ome_{Q}(\nu_{1}), \ldots, \Ome_{Q}(\nu_{r})$ are all distinct. In particular, we deduce that $$z_{i} = |\set{\mu \in \Ome_{P}(\sig)}{\mu_{C_{\CI}(\sig)} = \nu_{i}}|$$ for all $1 \leq i \leq r$,  and hence $\seq{z}{r}$ are positive integers. Since $\lam_{C_{\CI}(\sig)} = (\lam_{0})_{C_{\CI}(\sig)} \in \Ome_{Q}(\nu_{i})$ for some $1 \leq i \leq r$, we conclude that $$(\vsig_{\lam})_{C_{Q}(\sig)} = m \vsig_{0} + \zet$$ where $m = |\set{\mu \in \Ome_{P}(\sig)}{\mu_{C_{\CI}(\sig)} = \lam_{C_{\CI}(\sig)}}|$ and $\map{\zet}{C_{Q}(\sig)}{\C}$ is a superclass function satisfying $\frob{\vsig_{0}}{\zet} = 0$; moreover, since $\tet \in \irr(C_{Q}(\sig))$ is a linear constituent of $\vsig_{0}$, \reft{super3} implies that $\frob{\tet}{\zet} = 0$. It follows that $$m = \frob{\tet}{\vsig_{0}} = \frob{\tet}{(\vsig_{\lam})_{C_{Q}(\sig)}} = \frob{\tet^{C_{P}(\sig)}}{\vsig_{\lam}} = \frob{\chi}{\vsig_{\lam}},$$ and hence our claim is equivalent to showing that $$|C_{\CI}(\sig)^{\perp}| = \chi(1) = m = |\set{\mu \in \Ome_{P}(\sig)}{\mu_{C_{\CI}(\sig)} = \lam_{C_{\CI}(\sig)}}|.$$

Since the mapping $\mu \mapsto \mu_{C_{\CJ}(\sig)}$ defines a bijection $\map{\pi_{\CJ}}{\set{\mu \in \jc}{\mu^{\sig} = \mu}}{\jcc}$, it also defines a bijection $\map{\pi_{\CJ}}{\set{\mu \in \lam + \CI^{\perp}}{\mu^{\sig} = \mu}}{\lam_{C_{\CI}(\sig)} + C_{\CI}(\sig)^{\perp}}$; we recall that $\CI$ is $\sig$-invariant. Since $\lam + \CI^{\perp} \sset P\lam P$ (by \refp{lin}), we have $\set{\mu \in \lam + \CI^{\perp}}{\mu^{\sig} = \mu} = \Ome_{P}(\lam) \cap (\lam + \CI^{\perp})$, and thus $$|C_{\CI}(\sig)^{\perp}| = |\Ome_{P}(\lam) \cap (\lam + \CI^{\perp})| = |\set{\mu \in \Ome_{P}(\lam)}{\mu_{\CI} = \lam_{\CI}}|.$$ On the other hand, the bijection $\map{\pi_{\CI}}{\set{\mu \in \CI^{\circ}}{\mu^{\sig} = \mu}}{C_{\CI}(\sig)^{\circ}}$ also gives $$\set{\mu \in \Ome_{P}(\lam)}{\mu_{\CI} = \lam_{\CI}} = \set{\mu \in \Ome_{P}(\lam)}{\mu_{C_{\CI}(\sig)} = \lam_{C_{\CI}(\sig)}},$$ and thus we conclude that $m = |C_{\CI}(\sig)^{\perp}| = \chi(1)$, as required. This concludes the proof of \reft{super2}.
\end{proof}

Before we close this section, we give a brief summary of the principal results we obtained so far. Given a $\sig$-invariant algebra subgroup $P = 1+\CJ$ of $\ax$, we consider the action of $P$ on the left of $C_{\CJ}(\sig)$ defined by $x\cdot a = x\inv a x^{\sig}$ for all $x \in P$ and all $a \in C_{\CJ}(\sig)$, and denot by $\Ome_{P}(a)$ the orbit which contains an element $a \in C_{\CJ}(\sig)$. Then, for every $x \in C_{P}(\sig)$ the \text{superclass} of $C_{P}(\sig)$ which contains $x$ can be defined to be the image $\VPhi(\Ome_{P}(a))$ where $\map{\VPhi}{C_{\CJ}(\sig)}{C_{P}(\sig)}$ is the Cayley transform and $a \in C_{\CJ}(\sig)$ is such that $x = \VPhi(a)$. On the other hand, $P$ also acts on the left of the dual group $\jcc$ via the contragradient action given by $x\cdot \lam = x\inv \lam x^{\sig}$ for all $x \in P$ and all $\lam \in \jcc$. For every $\lam \in \jcc$, we denote by $\Ome_{P}(\lam)$ the orbit which contains $\lam$, and define the \textit{supercharacter} $\vsig_{\lam}$ of $C_{P}(\sig)$ to be the sum $$\vsig_{\lam} = \sum_{\mu \in \Ome_{P}(\lam)} \mu\circ\VPsi$$ where $\map{\VPsi}{C_{P}(\sig)}{C_{\CJ}(\sig)}$ is the inverse of the Cayley transform. We proved that for every $\lam \in \jcc$, the function $\vsig_{\lam}$ is in fact a character of $C_{P}(\sig)$ (\reft{super2}), and that $$\vsig_{\lam} = \sig_{X(\lam)} = \sum_{\chi \in X(\lam)} \chi(1) \chi$$ where $X(\lam) = \irr_{\lam}(C_{P}(\sig))$ denotes the set of all irreducible constituents of $\vsig_{\lam}$. Also, we showed that as $\lam$ runs over a set of representatives for the orbits of $P$ on $\jcc$ the sets $X(\lam)$ partition $\irr(C_{P}(\sig))$, and that together with the partition of $C_{P}(\sig)$ into superclasses they form a \textit{supercharacter theory} for $C_{P}(\sig)$; notice that for every $\lam \in \jcc$, the supercharacter $\vsig_{\lam}$ is clearly constant on each superclass, and that the number of superclasses equals the number of supercharacters (\reft{scf1}).


\section{The classical groups} \label{sec:classical}

In this section, we illustrate our construction in the special case where $\map{\sig}{\CM_{n}(\k)}{\CM_{n}(\k)}$ is a canonical involution on $\CM_{n}(\k)$ (as defined in the introduction); we will also assume that the upper unitriangular subgroup $\UT_{n}(\k)$ of $\GL_{n}(\k)$ is $\sig$-invariant. Thus, if $G = C_{\GL_{n}(\k)}(\sig)$ denotes the $\sig$-fixed subgroup of $\GL_{n}(\k)$, then $G$ is one of the following (finite) classical groups of Lie type (defined over $\k$): the {\it symplectic group} $Sp_{2m}(\k)$, the {\it orthogonal groups} $O^{+}_{2m}(\k)$ or $O_{2m+1}(\k)$, and the {\it unitary group} $U_{n}(\k)$. (As we mentioned in the introduction, if $\sig$ is such that $G$ is the orthogonal group $O^{-}_{2m+2}(\k)$, then $\UT_{n}(\k)$ has to be replaced by its maximal algebra subgroup $\UT_{n}(\k) \cap \UT_{n}(\k)^{\sig}$; since the supercharacter theory of this subgroup has a slighty different parametrization than that of $\UT_{n}(F)$, we skip the description and leave it as an exercise for the interested reader.) Thus, throughout the section, $P$ will stand for the (upper) unitriangular group $\UT_{n}(\k)$, and we assume that the involution $\sig$ is choosen so that $P$ is $\sig$-invariant. It is straighforward to check that $C_{P}(\sig)$ consists of all (block) matrices of the form
\begin{equation} \label{eq:e1}
\begin{pmatrix} x & xu & xz \\ 0 & I_{r} & -\bar{u}^{t}J \\ 0 & 0 & J\bar{x}^{-t}J \end{pmatrix}
\end{equation}
where $J = J_{m}$ (see the introduction), $x \in \UT_{m}(\k)$, $u \in \CM_{m\x r}(\k)$ and $z \in \CM_{m}(\k)$ satisfy the relations of the following table:\medskip
{\renewcommand{\arraystretch}{1.5}
$$\begin{array}{|c|c|} \hline
\hspace{5mm}\text{Classical group}\hspace{5mm} & \hspace{2.5cm}\text{Relations}\hspace{2.5cm} \\ \hline
Sp_{2m}(\k) & r = 0,\ Jz^{t} - zJ = 0, \\ \hline
O^{+}_{2m}(\k) & r = 0,\ Jz^{t} + zJ = 0 \\ \hline
O_{2m+1}(\k) & r = 1,\ Jz^{t} + zJ = -uu^{t} \\ \hline
U_{2m}(\k) & r = 0,\ J\bar{z}^{t} + zJ = 0 \\ \hline
U_{2m+1}(\k) & r = 1,\ J\bar{z}^{t} + zJ = -u\bar{u}^{t} \\
\hline
\end{array}$$}\medskip

\hspace{-\parindent}We note that $P = 1 + \CJ$ is the algebra group which is associated with the $\sig$-invariant nilpotent uppertriangular subalgebra $\CJ = \fru_{n}(\k)$ of $\CM_{n}(\k)$, and thus $$C_{P}(\sig) = \VPhi(C_{\CJ}(\sig))$$ where $\map{\VPhi}{\CJ}{P}$ is the Cayley transform. Then, $C_{\CJ}(\sig)$ consists of all (block) matrices of the form
\begin{equation}
\begin{pmatrix} a & u & w \\ 0 & 0_{r} & -\bar{u}^{t}J \\ 0 & 0 & -J\bar{a}^{t}J \end{pmatrix}
\end{equation}
where
where $J = J_{m}$, $x \in \UT_{m}(\k)$, $u \in \CM_{m\x r}(\k)$ and $z \in \CM_{m}(\k)$ satisfy the relations of the following table:\medskip
{\renewcommand{\arraystretch}{1.5}
$$\begin{array}{|c|c|} \hline
\hspace{5mm}\text{Classical group}\hspace{5mm} & \hspace{2.5cm}\text{Relations}\hspace{2.5cm} \\ \hline
Sp_{2m}(\k) & r = 0,\ Jw^{t} - wJ = 0, \\ \hline
O^{+}_{2m}(\k) & r = 0,\ Jw^{t} + wJ = 0 \\ \hline
O_{2m+1}(\k) & r = 1,\ Jw^{t} + wJ = 0 \\ \hline
U_{2m}(\k) & r = 0,\ J\bar{w}^{t} + wJ = 0 \\ \hline
U_{2m+1}(\k & r = 1,\ J\bar{w}^{t} + wJ = 0 \\
\hline
\end{array}$$}\medskip

Superclasses and supercharacters of $P$ are parametrised by pairs $(\CD,\vphi)$ where $\CD$ is a \textit{basic subset} of $\sn = \set{(i,j)}{1 \leq i < j \leq n}$ and $\map{\vphi}{\CD}{\kx}$ is any map. By definition, a subset $\CD \sset \sn$ is said to be {\it basic} if it contains at most one entry from each row and at most one root from each column; in other words, $\CD$ is basic if $|\set{j}{i < j \leq n,\ (i,j) \in \CD}| \leq 1$ and $|\set{i}{1 \leq i < j,\ (i,j) \in \CD}| \leq 1$ for all $1 \leq i, j \leq n$. Henceforth, we will refer to such a pair $(\CD,\vphi)$ as a {\it basic pair} for $P$. For $(i,j) \in \sn$, we denote by $e_{i,j}$ the $(i,j)$th unit matrix with $1$ in the $(i,j)$th position and $0$'s elsewhere; hence, $\set{e_{i,j}}{(i,j) \in \sn}$ is the standard $\k$-basis of $\CJ$. For every basic pair $(\CD,\vphi)$, we define $$\ed = \sum_{(i,j) \in \CD} \vphi(i,j) e_{i,j} \in \CJ;$$ notice that, if $\CD$ is empty, then the sum is empty, and hence $\ed = 0$ (by convention, in this extreme case, we agree that $\vphi$ is the empty function). In virtue of \refl{scl1}, we define the \textit{superclass} $\HKD$ of $P$ to be the subset $$\HKD = \VPhi\big(P \ed P\big)$$ of $P$; notice that $\HKD$ contains the element $\VPhi(\ed) = 1+2\ed$. We have:
\begin{enumerate}
\item If $\HCK$ is a superclass of $P$, then $\HCK = \HKD$ for some basic pair $(\CD,\vphi)$.
\item If $(\CD,\vphi)$ and $(\CD',\vphi')$ are basic pairs for $P$, then $\HKD \cap \HCK_{\CD'}(\vphi') \neq \emptyset$ if and only if $(\CD,\vphi) = (\CD',\vphi')$.
\end{enumerate}

As in \refs{super1}, the superclasses of the $\sig$-fixed subgroup $C_{P}(\sig)$ are defined to be the non-empty intersections $$\KD = \HKD \cap C_{P}(\sig)$$ where $(\CD,\vphi)$ is a basic pair for $P$; moreover, by \refp{scl1}, this intersection is non-empty if and only if $\HKD$ is $\sig$-invariant. In fact, for a fixed basic pair $(\CD,\vphi)$, the action of $\sig$ defines a superclass $\HKD^{\sig}$ (by \refl{scl1}), and thus there is a basic pair $(\ds,\phs)$ such that $$\HKD^{\sig} = \HCK_{\ds}(\phs) = \VPhi\big(P e_{\ds}(\phs) P\big);$$ in particular, it follows that $\HKD$ is $\sig$-invariant if and only if $\ds = \CD$ and $\phs = \vphi$. By \refl{scl2}, we know that $$\HCK_{\ds}(\phs) = \VPhi\big(P\ed^{\sig}P\big).$$ Since $\sig$ is canonical, we have $(e_{i,j})^{\sig} = -\sig(e_{i,j}) = \pm e_{n-j+1,n-i+1}$ for all $(i,j) \in \sn$, and so $\ed^{\sig} = e_{\ds}(\phs)$. In particular, we conclude that $\HKD$ is $\sig$-invariant if and only if $\ed \in C_{\CJ}(\sig)$, and thus \reft{scl2} implies the following result. Here, and henceforth, we say that a basic pair $(\CD,\vphi)$ for $P$ is \textit{$\sig$-invariant} if $(\ds,\phs) = (\CD,\vphi)$ (hence, $(\CD,\vphi)$ is $\sig$-invariant if and only if $\ed \in C_{\CJ}(\sig)$); similarly, we say that a basic subset $\CD$ of $\sn$ is \textit{$\sig$-invariant} if $\ds = \CD$.

\begin{proposition} \label{prop:sclnt}
There is a one-to-one correspondence between superclasses of $C_{P}(\sig)$ and $\sig$-invariant basic pairs for $P$, where the superclass $\KD$ which corresponds to a $\sig$-invariant basic pair $(\CD,\vphi)$ is given by $$\KD = \set{\VPhi(x\ed x^{-\sig})}{x \in P}.$$
\end{proposition}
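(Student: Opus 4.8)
The plan is to assemble the proposition from the parametrization of $\scl(P)$ by basic pairs together with the results of \refs{super1}. First I would recall the two facts stated just above the proposition: $(\CD,\vphi)\mapsto\HKD$ is a bijection from the set of basic pairs for $P$ onto $\scl(P)$, and $\HKD$ is $\sig$-invariant exactly when $(\ds,\phs)=(\CD,\vphi)$ --- that is, when the basic pair $(\CD,\vphi)$ is $\sig$-invariant, equivalently when $\ed\in C_{\CJ}(\sig)$. (This last chain of equivalences rests on the computation $(e_{i,j})^{\sig}=\pm e_{n-j+1,n-i+1}$ valid for a canonical involution, which forces $\ed^{\sig}=e_{\ds}(\phs)$; combined with the linear independence of the $e_{i,j}$, this is exactly what was verified in the paragraph preceding the statement.) Restricting the basic-pair bijection then identifies the set of $\sig$-invariant basic pairs for $P$ with $\scl_{\sig}(P)$, the set of $\sig$-invariant superclasses of $P$.

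Next I would invoke \refp{scl1} together with \refeq{scl}: the map $\HCK\mapsto\HCK\cap C_{P}(\sig)$ is a bijection from $\scl_{\sig}(P)$ onto $\scl(C_{P}(\sig))$. Composing it with the bijection of the previous paragraph yields a bijection between $\sig$-invariant basic pairs for $P$ and superclasses of $C_{P}(\sig)$, carrying $(\CD,\vphi)$ to $\KD=\HKD\cap C_{P}(\sig)$. This is precisely the claimed one-to-one correspondence.

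For the explicit description of $\KD$, I would fix a $\sig$-invariant basic pair $(\CD,\vphi)$, so that $\ed\in C_{\CJ}(\sig)$. Since the Cayley transform maps $C_{\CJ}(\sig)$ bijectively onto $C_{P}(\sig)$, the element $x=\VPhi(\ed)=1+2\ed$ lies in $C_{P}(\sig)$; and since $\ed\in P\ed P$, it lies in $\VPhi(P\ed P)=\HKD$. Hence $x\in\HKD\cap C_{P}(\sig)=\KD$, so $\KD$ is non-empty and contains $x=\VPhi(\ed)$ with $\ed\in C_{\CJ}(\sig)$. Applying \reft{scl2} with $a=\ed$ then gives $\KD=\set{\VPhi(z\ed z^{-\sig})}{z\in P}$, as desired.

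I do not expect a serious obstacle here: the argument is essentially a formal composition of bijections that have already been established --- the basic-pair parametrization of $\scl(P)$, the restriction map of \refp{scl1}, and the orbit description of \reft{scl2}. The only point that genuinely uses the hypothesis that $\sig$ is a canonical involution is the equivalence ``$\HKD$ $\sig$-invariant $\iff$ $\ed\in C_{\CJ}(\sig)$'', and this has already been disposed of in the discussion preceding the statement; the remaining work is bookkeeping.
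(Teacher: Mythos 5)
Your argument is correct and coincides with the paper's own (implicit) proof: the paper likewise derives the proposition from the basic-pair parametrization of $\scl(P)$, the equivalence ``$\HKD$ $\sig$-invariant $\iff \ed\in C_{\CJ}(\sig)$'' established in the preceding paragraph, the bijection $\HCK\mapsto\HCK\cap C_{P}(\sig)$ of \refp{scl1}, and the orbit description of \reft{scl2} applied with $a=\ed$. Nothing is missing.
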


\begin{remark} \label{rmk:disjoint}
It is clear that every $\sig$-invariant basic subset $\CD$ of $\sn$ decomposes as a disjoint union $\CD = \CD_{1} \sqcup (\CD_{1})^{\sig} \sqcup \CD_{0}$ where
\begin{align*}
\CD_{1} &= \set{(i,j) \in \CD}{i \leq m,\ j < n-i+1},\:\text{and}\\
\CD_{0} &= \set{(i,n-i+1)}{i \leq m,\ (i,n-i+1)\in \CD};
\end{align*}
notice that $(\CD_{1})^{\sig} = \set{(n-j+1,n-i+1)}{(i,j) \in \CD_{1}}$ and that $(\CD_{0})^{\sig} = \CD_{0}$. On the other hand, if $\map{\vphi}{\CD}{\kx}$ is any map and $|\ks| = q$, then $\ed \in C_{\CJ}(\sig)$ if and only if
\begin{equation} \label{eq:vphi}
\vphi(n-j+1,n-i+1) = \begin{cases} - \vphi(i,j)^{q}, & \text{if $j \leq m+r$,} \\ -\vphi(i,j)^{q}, & \text{if $m+r < j$ and $G \neq Sp_{2m}(\k)$,} \\ \vphi(i,j), & \text{if $m < j$ and $G = Sp_{2m}(\k)$,} \end{cases}
\end{equation}
for all $(i,j) \in \CD_{1} \cup \CD_{0}$. In particular, we deduce that
\begin{itemize}
\item if, either $G = O^{+}_{2m}(\k)$, or $G = O_{2m+1}(\k)$, then $\vphi(i,n-i+1) = 0$ for all $1 \leq i \leq m$;
\item if $G = U_{n}(\k)$, then $\vphi(i,n-i+1) + \vphi(i,n-i+1)^{q} = 0$ for all $1 \leq i \leq m$.
\end{itemize}
\end{remark}

Next, we consider supercharacters, and we start by recalling the construction of the supercharacter $\hxd$ of $P$ which is associated with a given basic pair $(\CD,\vphi)$; for the details, we refer to \refs{super2}. We fix any non-trivial $\sig$-invariant linear character $\map{\tet}{\k^{+}}{\cx}$; thus, since $\sig$ acts on $\k$ as the Frobenius automorphism, we have $\tet(\alp^{q}) = \tet(\alp)$ for all $\alp \in \k$. Then, we define the linear character $\map{\ld}{\CJ^{+}}{\cx}$ of the additive group $\CJ^{+}$ by the rule $$\ld(a) = \prod_{(i,j) \in \CD} \tet(\vphi(i,j) a_{ij})$$ for all $a \in \CJ$, and let $$L_{\CD} = L(\ld) = \set{x \in P}{x\ld = \ld}$$ be the centraliser of $\ld$ with respect to the left $P$-action on $\CJ$. It is routine to check that $L_{\CD}$ consists of all matrices $x \in P$ which satisfy $x_{ik} = 0$ for all $(i,j) \in \CD$ and all $i < k < j$ (hence, $L_{\CD}$ does not depend on the map $\vphi$), and that the mapping $$x \mapsto \ld\big(\VPsi(x)\big)$$ defines a linear character $\map{\hthd}{L_{\CD}}{\cx}$. Then, we define the \textit{supercharacter} $\hxd$ of $P$ to be the induced character $$\hxd = \hthd^{P}.$$ In particular, if $\CD = \{(i,j)\}$ consists of a single entry $(i,j) \in \sn$ and $\map{\vphi}{\CD}{\kx}$ is given by $\vphi(i,j) = \alp \in \kx$, then we write $\lam_{i,j}(\alp)$, $\htet_{i,j}(\alp)$ and $\hxi_{i,j}(\alp)$ instead of $\ld$, $\hthd$ and $\hxd$, respectively; if this is the case, then we refer to the supercharacter $\hxi_{i,j}(\alp) = \htet_{i,j}(\alp)^{P}$ as the \textit{$(i,j)$th elementary character of $P$ associated with $\alp$}. In the general case, since $L_{\CD} = \bigcap_{(i,j) \in \CD} L_{i,j}$ where we write $L_{i,j} = L_{\{(i,j)\}}$, it is not difficult to prove that the supercharacter $\hxd$ factorises as the product
\begin{equation} \label{eq:hxd}
\hxd = \prod_{(i,j) \in \CD} \hxi_{i,j}(\vphi(i,j))
\end{equation}
of elementary supercharacters (see, for example, \cite[Theorem~1]{And5}); we also note that every elementary supercharacter is in fact an irreducible character of $P$ (see \cite[Lemma~2]{And3}, or \cite[Corollary~5.11]{DiaIsa}).

If $(\CD,\vphi)$ is any basic pair for $P$, then the action of $\sig$ defines a supercharacter $\hxd^{\sig}$ which corresponds to the linear character $\ld^{\sig}$ of $\CJ^{+}$ (by \refl{sch1}). Since $\tet$ is $\sig$-invariant, it is easy to check that for all $(i,j) \in \sn$ and all $\alp \in \kx$ we have $$\lam_{i,j}(\alp) = \begin{cases} \lam_{n-j+1,n-i+1}(-\alp^{q}), & \text{if $j \leq m+r$,} \\ \lam_{n-j+1,n-i+1}(-\alp^{q}), & \text{if $m+r < j$ and $G \neq Sp_{2m}(\k)$,} \\ \lam_{n-j+1,n-i+1}(\alp), & \text{if $m < j$ and $G = Sp_{2m}(\k)$,} \end{cases}$$ where $q = |\ks|$, and this clearly implies that $\ld^{\sig} = \lam_{\ds}(\phs)$ where the basic pair $(\ds,\phs)$ is as above. Therefore, we have
\begin{equation} \label{eq:xids}
\hxd^{\sig} = \hxi_{\ds}(\phs),
\end{equation}
and it follows that $\hxd$ is $\sig$-invariant if and only if the basic pair $(\CD,\vphi)$ is $\sig$-invariant. By \refp{sch2}, we obtain the following result.

\begin{proposition} \label{prop:schnt}
There is a one-to-one correspondence between supercharacters of $C_{P}(\sig)$ and $\sig$-invariant basic pairs for $P$.
\end{proposition}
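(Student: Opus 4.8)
The plan is to present the asserted bijection as a composite of correspondences that are already in place, so that almost nothing new has to be proved. On the $P$-side, supercharacters of $P=\UT_{n}(\k)$ are parametrized without repetition by basic pairs $(\CD,\vphi)$, and $\sch(P)$ is simultaneously in bijection with the set of two-sided orbits of $P$ on $\jc$, the orbit attached to $\hxd$ being $P\ld P$; moreover the $\sig$-action on $\cf(P)$ is compatible with both parametrizations, since $(\hxd)^{\sig}=\hxi_{\ds}(\phs)$ by \refeq{xids} and $(\hxi_{\lam})^{\sig}=\hxi_{\lam^{\sig}}$ by \refl{sch1}. As already observed in the discussion preceding the statement, $\hxd$ is $\sig$-invariant if and only if $(\CD,\vphi)$ is a $\sig$-invariant basic pair; hence $(\CD,\vphi)\mapsto P\ld P$ yields a bijection between $\sig$-invariant basic pairs for $P$ and $\sig$-invariant two-sided orbits of $P$ on $\jc$.

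Next I would identify the supercharacters of $C_{P}(\sig)$ with the same family of orbits. By \reft{super1} the supercharacters of $C_{P}(\sig)$ are the functions $\sig_{\lam}$ with $\lam\in\jcc$, and $\sig_{\lam}=\sig_{\mu}$ precisely when $\mu\in\Ome_{P}(\lam)$. By \refp{fixed}, $\Ome_{P}(\lam)$ is exactly the set of $\sig$-fixed elements of the two-sided orbit $P\lam P$; since any $\mu\in\jcc$ is automatically $\sig$-fixed, this shows that for $\lam,\mu\in\jcc$ the relation $\mu\in\Ome_{P}(\lam)$ is equivalent to $P\mu P=P\lam P$. Consequently $\sig_{\lam}\mapsto P\lam P$ is a well-defined injection from $\sch(C_{P}(\sig))$ into the set of two-sided orbits of $P$ on $\jc$, with image the set of orbits that contain a $\sig$-fixed linear character. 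Any such orbit is $\sig$-invariant (it contains the $\sig$-fixed element $\lam$), and conversely every $\sig$-invariant two-sided orbit contains a $\sig$-fixed linear character --- this is precisely the application of Glauberman's Lemma carried out in the proof of \refp{sch2}. Thus $\sig_{\lam}\mapsto P\lam P$ is a bijection between $\sch(C_{P}(\sig))$ and the $\sig$-invariant two-sided orbits of $P$ on $\jc$.

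Composing the two bijections gives the desired one-to-one correspondence, under which a $\sig$-invariant basic pair $(\CD,\vphi)$ corresponds to the supercharacter $\sig_{\ld}$ of $C_{P}(\sig)$. I do not expect a genuine obstacle: the argument is essentially bookkeeping, matching up the several already-established parametrizations and checking that the two equivalence relations coincide, with the one slightly non-formal point --- surjectivity, i.e. that every $\sig$-invariant basic pair is actually realized --- being supplied by the Glauberman argument behind \refp{sch2}. The only subtlety worth flagging is that distinct characters $\lam\in\jcc$ may lie in a common two-sided orbit, so the statement is genuinely about orbits (equivalently, $\sig$-invariant basic pairs) rather than about the elements of $\jcc$ themselves.
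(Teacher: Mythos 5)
Your proof is correct and follows essentially the same route as the paper, which simply invokes \refp{sch2} together with \refeq{xids} and the already-established orbit parametrizations (\reft{super1}, \refp{fixed}, and the Glauberman argument); you have merely spelled out the bookkeeping explicitly. No gaps.
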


In what follows, we fix an arbitrary $\sig$-invariant basic pair $(\CD,\vphi)$ for $P$, and consider the supercharacter of $C_{P}(\sig)$ which is associated with $(\CD,\vphi)$. On the one hand, let $$\Ome_{\CD}(\vphi) = \set{x\inv\ld x^{\sig}}{x \in P}$$ be the subset of $P\ld P$ consisting of $\sig$-fixed elements (see \refp{fixed}), and define the map $\map{\sd}{C_{P}(\sig)}{\C}$ by the rule
\begin{equation} \label{eq:sd}
\sd(x) = \sum_{\lam \in \Ome_{\CD}(\vphi)} \mu(\VPsi(x))
\end{equation}
for all $x \in C_{P}(\sig)$. By \reft{super2}, $\sd$ is a character of $C_{P}(\sig)$, and in fact $$\sd = \sum_{\chi \in X_{\CD}(\vphi)} \chi(1)\, \chi$$ where $X_{\CD}(\vphi) = X(\ld)$ denotes the set  consisting of all irreducible constituents of $\sd$; we recall that $X_{\CD}(\vphi)$ can also be described as the set consisting of all irreducible characters $\chi \in \irr(C_{P}(\sig))$ such that the Glauberman correspondent $\hchi \in \irr_{\sig}(P)$ of $\chi$ is a constituent of the supercharacter $\hxd$ of $P$. The results of \refs{super2} imply the following.

\begin{theorem} \label{thm:schth}
If\; $\D$ denotes the set of all $\sig$-invariant basic pairs for $P$, then the sets $\CX = \set{\sd}{(\CD,\vphi) \in \D}$ and $\CY = \set{\KD}{(\CD,\vphi) \in \D}$ form a supercharacter theory for $C_{P}(\sig)$.
\end{theorem}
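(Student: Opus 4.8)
The plan is to deduce \reft{schth} from the general construction carried out in Sections~\ref{sec:super1}--\ref{sec:proof}, combined with the explicit parametrizations obtained above in \refps{sclnt}{schnt}. Recall that the summary at the end of \refs{proof} (which rests on \reft{super1}, \reft{scf1} and \reft{super2}) shows that, for \emph{every} $\sig$-invariant algebra group $P = 1+\CJ$, the set $\scl(C_P(\sig))$ of superclasses, together with the characters $\vsig_\lam$ as $\lam$ runs over a transversal for the orbits of $P$ on $\jcc$ under the action $x\cdot\lam = x\inv\lam x^{\sig}$, forms a supercharacter theory of $C_P(\sig)$. Hence, for $P = \UT_n(\k)$ and the canonical involution $\sig$ under consideration, it suffices to show that $\D$ is a complete and irredundant set of labels for these data: that $(\CD,\vphi)\mapsto\KD$ lists the superclasses without repetition, that $(\CD,\vphi)\mapsto\sd$ lists the characters $\vsig_\lam$ without repetition (one per orbit), and that $\sd$ and $\KD$ are precisely the objects just described.

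On the superclass side this is exactly \refp{sclnt}: the map $(\CD,\vphi)\mapsto\KD$ is a bijection of $\D$ onto $\scl(C_P(\sig))$, so $\CY = \scl(C_P(\sig))$ (and, taking $\CD=\emptyset$, we see that $\{1\}\in\CY$). On the supercharacter side, \refeq{sd} together with \refp{fixed} shows that $\sd$ is the sum $\sum_{\mu\in\Ome_{\CD}(\vphi)}\mu\circ\VPsi$ over the orbit $\Ome_{\CD}(\vphi) = \set{x\inv\ld x^{\sig}}{x\in P}$ of the $\sig$-invariant linear character $\ld\in\jcc$; that is, $\sd$ is precisely the supercharacter $\vsig_\lam$ attached to this orbit. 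Moreover \refp{schnt} gives that $(\CD,\vphi)\mapsto\sd$ is a bijection of $\D$ onto $\sch(C_P(\sig))$, and since $\vsig_\lam = \vsig_\mu$ exactly when $\mu\in\Ome_P(\lam)$ (by \reft{super1} and \reft{super2}), the composite $(\CD,\vphi)\mapsto(\text{orbit of }\ld)$ is a bijection of $\D$ onto the set of orbits of $P$ on $\jcc$. Consequently, as $(\CD,\vphi)$ ranges over $\D$, the sets $X_{\CD}(\vphi)$ of irreducible constituents of the $\sd$ are precisely the blocks $X(\lam)$ with $\lam$ an orbit representative.

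With these identifications the axioms (in the form (S1')--(S3') of \refs{intro}) follow at once. For (S1'): the two bijections above yield $|\CX| = |\D| = |\CY|$ (equivalently, this is \reft{scf1}). For (S2'): by \reft{super1} the blocks $X_{\CD}(\vphi)$, $(\CD,\vphi)\in\D$, partition $\irr(C_P(\sig))$, so every irreducible character of $C_P(\sig)$ is a constituent of exactly one $\sd$. For (S3'): by \reft{super2}, each $\sd$ is a superclass function on $C_P(\sig)$, hence is constant on every member of $\CY$. (The original axioms (S1)--(S3) also hold, the relevant partition of $\irr(C_P(\sig))$ being $\set{X_{\CD}(\vphi)}{(\CD,\vphi)\in\D}$, together with $\{1\}\in\CY$ and $\sd = \sum_{\psi\in X_{\CD}(\vphi)}\psi(1)\psi$.) Thus $(\CX,\CY)$ is a supercharacter theory of $C_P(\sig)$.

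I do not anticipate a genuine obstacle here; \reft{schth} is essentially a bookkeeping corollary of Sections~\ref{sec:super1}--\ref{sec:proof}. The only point requiring care is the reconciliation of the two parametrizations involved --- the basic-pair labelling of the superclasses and supercharacters of $P = \UT_n(\k)$ coming from the Diaconis--Isaacs/André theory, versus the orbit labelling used throughout Sections~\ref{sec:super1}--\ref{sec:proof} --- and, in particular, the check that $\sig$-invariance of a basic pair corresponds on both sides to $\sig$-invariance of the associated orbit. This is precisely what \refps{sclnt}{schnt} accomplish (see also \refeqs{vphi}{xids}), so in the end the proof amounts to assembling those statements.
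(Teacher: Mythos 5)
Your proposal is correct and follows essentially the same route as the paper, which states \reft{schth} as an immediate consequence of the general theory of Sections~\ref{sec:super1}--\ref{sec:proof} (Theorems \ref{thm:super1}, \ref{thm:scf1} and \ref{thm:super2}) combined with the parametrizations in \refps{sclnt}{schnt}. Your write-up simply makes explicit the bookkeeping that the paper leaves implicit, including the correct matching of $\sig$-invariant basic pairs with the orbits of $P$ on $\jcc$.
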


Although the supercharacters are defined is a different way,  in the case of the symplectic and orthogonal groups this supercharacter theory for $C_{P}(\sig)$ turns out to be the same as the one described in the papers \cite{AndNet1, AndNet2, AndNet3}; in fact, \cite[Theorem~6.1]{AndNet3} asserts that, up to the multiplication by a positive integer, the supercharacter $\sd$ can be obtained by inducting a linear character of a suitable subgroup of $C_{P}(\sig)$. To see this, we first define the subgroup $Q_{\CD}$ of $P$ as follows: for every $(i,j) \in \sn$ let
\begin{align*}
Q_{i,j} &= L_{i,j},\quad \text{if $j \leq m$,}\\
Q_{i,j} &= \set{x \in P}{x_{i,k} = x_{k,j} = 0 \text{ for all } i < k \leq m},\quad \text{if $i \leq m < j$;}\\
Q_{i,j} &= (L_{n-j+1,n-i+1})^{\sig},\quad \text{if $m < i$;}
\end{align*}
then, $$Q_{\CD} = \bigcap_{(i,j) \in \CD} Q_{i,j}.$$ On the other hand, for every map $\map{\vphi}{\CD}{\kx}$, we define $\map{\htd}{Q_{\CD}}{\cx}$ by $$\htd(x) = \ld\big(\VPsi(x)\big)$$ for all $x \in Q_{\CD}$. It is easy to check that $\htd$ is a linear character of $Q_{\CD}$; moreover, by \cite[Lemma~2.1]{AndNet2} it follows that
\begin{equation} \label{eq:basic2}
\hxd = \htd^{P}
\end{equation}
(see also \refp{sch0}). If the basic pair $(\CD,\vphi)$ is $\sig$-invariant, then it is straightforward to check that the subgroup $Q_{\CD}$ and the linear character $\htd$ are both $\sig$-invariant; if this is the case, we denote by $\td$ the restriction of $\htd$ to the $\sig$-fixed subgroup $C_{Q_{\CD}}(\sig)$, and define
\begin{equation} \label{eq:xd}
\xd = \td^{C_{P}(\sig)}.
\end{equation}
We claim that there exists a positive integer $n_{\CD,\vphi}$ such that $\sd = n_{\CD,\phi} \xd$. To see this, we first prove the following general result (which extends \refl{sch2}).

\begin{proposition} \label{prop:basic1}
Let $\CJ$ be a $\sig$-invariant nilpotent subalgebra of $\CA$, and let $P = 1+\CJ$. Let $\CI$ be a $\sig$-invariant subalgebra of $\CJ$, let $Q = 1+\CI$, let $\htet \in \irr_{\sig}(Q)$ and let $\tet = \pi_{Q}(\htet) \in \irr(C_{Q}(\sig))$ be the Glauberman correspondent of $\htet$. Moreover, let $\chi \in \irr(C_{P}(\sig))$, and let $\hchi \in \irr_{\sig}(P)$ be such that $\pi_{P}(\hchi) = \chi$. Then, $\frob{\chi}{\tet^{C_{P}(\sig)}} \neq 0$ if and only if $\frob{\hchi}{\htet^{P}} \neq 0$.
\end{proposition}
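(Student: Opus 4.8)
The plan is to reduce this to the Glauberman correspondence machinery together with the transitivity of induction and Frobenius reciprocity, mirroring the argument that proves \refl{sch2} but working relative to the algebra subgroup $Q = 1+\CI$ rather than a normal subgroup. The key observation is that $\htet^P$ and $\tet^{C_P(\sig)}$ are linked through the Glauberman correspondence for the larger group $P$, so the content of the statement is really a compatibility between ``induction from $Q$ to $P$'' and ``induction from $C_Q(\sig)$ to $C_P(\sig)$'' under the Glauberman maps $\pi_Q$ and $\pi_P$.

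First I would recall that, since $\htet \in \irr_{\sig}(Q)$ is $\sig$-invariant and $\CI$ is $\sig$-invariant, the induced character $\htet^P$ is a $\sig$-invariant (not necessarily irreducible) character of $P$, and its irreducible constituents $\hchi$ are exactly those $\sig$-invariant irreducibles of $P$ with $\frob{\hchi}{\htet^P} = \frob{\htet}{\hchi_Q} \neq 0$. The substantive input is that the Glauberman correspondence behaves well under this induction: if $\hchi \in \irr_\sig(P)$ with Glauberman correspondent $\chi = \pi_P(\hchi)$, then $\frob{\hchi}{\htet^P} \neq 0$ if and only if $\frob{\chi}{\tet^{C_P(\sig)}} \neq 0$, where $\tet = \pi_Q(\htet)$. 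For the case where $Q$ is normal in $P$ this is exactly \cite[Theorem~13.29]{Isa1} (as used in \refl{sch2}); in general I would reduce to a chain of subgroups. The approach is to choose a chain $Q = Q_0 \lhd Q_1 \lhd \cdots \lhd Q_s = P$ of $\sig$-invariant subgroups in which each step is normal — this is always possible for $p$-groups (indeed for algebra groups, take a chief series of $P$ refined appropriately through $Q$, and observe it may be chosen $\sig$-invariant because $\sig$ has order coprime to $|P|$, so by Glauberman's lemma each term can be replaced by a $\sig$-invariant one). At each step $Q_{t-1} \lhd Q_t$, apply \cite[Theorem~13.29]{Isa1} to the normal pair together with the functoriality of the Glauberman map under restriction-to-fixed-points; then compose using transitivity of induction $(\htet^{Q_t})^P = \htet^P$ and the corresponding identity $(\tet^{C_{Q_t}(\sig)})^{C_P(\sig)} = \tet^{C_P(\sig)}$ (the latter because $C_{Q_{t}}(\sig) = Q_t \cap C_P(\sig)$ gives a genuine subgroup chain in $C_P(\sig)$).

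The main obstacle I anticipate is justifying that the Glauberman correspondence is compatible with induction along a \emph{normal} step in the generality needed — that is, that for $R \lhd P$ both $\sig$-invariant, if $\htet \in \irr_\sig(R)$, $\tet = \pi_R(\htet)$, and $\hchi \in \irr_\sig(P)$, $\chi = \pi_P(\hchi)$, then $\frob{\hchi}{\htet^P}\neq 0 \iff \frob{\chi}{\tet^{C_P(\sig)}}\neq 0$. The subtlety is that $\htet^P$ may fail to be irreducible and one must track how its $\sig$-invariant constituents decompose; Clifford theory enters, and one must check that $\sig$ acts compatibly on the inertia group $I_P(\htet)$ and that $C_{I_P(\htet)}(\sig) = I_{C_P(\sig)}(\tet)$. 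This is precisely the situation addressed by \cite[Theorem~13.29]{Isa1}, so the real work is verifying its hypotheses ($R \lhd P$, $|P/R|$ odd or the relevant coprimality, $\sig$-invariance of the data) hold at each stage. Once that normal-subgroup case is in hand, the passage to a general $\sig$-invariant subalgebra $\CI$ is a routine induction on the length of the chain, and the proposition follows. (In fact, one may bypass the chain entirely and simply cite \cite[Theorem~2.1]{And4} in the form already used in the proof of \reft{super2}: that theorem produces, for the $\sig$-invariant constituent $\hchi$ of $\htet^P$, the factorization $\hchi = \htet^P$ with $\chi = \tet^{C_P(\sig)}$, which is exactly the equivalence asserted here; I would present the argument this way to keep it short, noting that \refl{sch2} is the special case $Q$ normal.)
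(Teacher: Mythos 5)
Your primary route --- a $\sig$-invariant subnormal chain from $Q$ up to $P$, with \cite[Theorem~13.29]{Isa1} applied at each normal step and the steps composed via transitivity of induction and Frobenius reciprocity --- is in substance the paper's own proof: there the induction is on $\dim\CJ$, and the single normal step is $N = 1+(\CI+\CJ^{2})$, which is automatically $\sig$-invariant and normal in $P$ and equals $P$ only when $\CI=\CJ$ (by \cite[Lemma~3.1]{Isa2}), so no adjustment of a series is ever needed. Two details of your sketch need repair. First, the chain itself: a chief series of $P$ consists of subgroups normal in $P$, so it cannot be ``refined through'' a non-normal $Q$, and Glauberman's lemma does not convert the terms of an arbitrary series into $\sig$-invariant ones; what does work is the iterated normaliser chain (the normaliser of a $\sig$-invariant subgroup is $\sig$-invariant, and normalisers grow in a nilpotent group), or simply the paper's algebra chain. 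Second, and more substantively, \cite[Theorem~13.29]{Isa1} alone does not carry a normal step: from $\frob{\hchi}{\htet^{P}} \neq 0$ you only get $\frob{\hchi_{N}}{\htet^{N}} \neq 0$, and before 13.29 can be applied you must produce a $\sig$-invariant $\htau \in \irr_{\sig}(N)$ linking the two sides --- the paper invokes \cite[Theorem~13.27]{Isa1} in that direction, and in the opposite direction it instead lifts a common constituent $\tau \in \irr(C_{N}(\sig))$ using the bijectivity of $\pi_{N}$; your sketch leaves this, which is the only delicate point, implicit and attributes it to 13.29.

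The shortcut you say you would actually present does not work. \cite[Theorem~2.1]{And4} is an existence statement: given $\hchi \in \irr_{\sig}(P)$, it produces \emph{some} $\sig$-invariant algebra subgroup $Q$ and \emph{some} $\sig$-invariant \emph{linear} character $\htet$ of $Q$ with $\htet^{P} = \hchi$ irreducible and $\tet^{C_{P}(\sig)} = \chi$. The proposition, by contrast, concerns an arbitrary $\sig$-invariant subalgebra $\CI$ and an arbitrary (not necessarily linear) $\htet \in \irr_{\sig}(Q)$, where $\htet^{P}$ and $\tet^{C_{P}(\sig)}$ are typically reducible, and asserts an equivalence about constituents; this is precisely the generality in which it is used afterwards (for instance with $Q_{\CD}$ and $\htd$ in \refl{basic1}, where $\htd^{P} = \hxd$ is usually not irreducible). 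Citing \cite[Theorem~2.1]{And4} would only settle the degenerate case $\hchi = \htet^{P}$, so the chain/induction argument (or the paper's induction on $\dim\CJ$) cannot be bypassed.
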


\begin{proof}
We proceed by induction on $\dim \CJ$. Firstly, suppose that $\CI + \CJ^{2} = \CJ$. Then, by \cite[Lemma~3.1]{Isa2}, we have $\CI = \CJ$; hence, $Q = P$ and there is nothing to prove. Otherwise, let $N = 1+(\CI+\CJ^{2})$; hence, $Q \sset N \subsetneq P$. Then, since $\CJ^{2} \sset \CI+\CJ^{2}$ and since $\CJ^{2}$ is clearly $\sig$-invariant, $N$ is a $\sig$-invariant normal subgroup of $P$. Now, let us assume that $\frob{\chi}{\tet^{C_{P}(\sig)}} \neq 0$. Then, by Frobenius reciprocity, we have $\frob{\chi}{(\tet^{C_{N}(\sig)})^{C_{P}(\sig)}} = \frob{\chi_{C_{N}(\sig)}}{\tet^{C_{N}(\sig)}}$, and thus there exists $\tau \in \irr(C_{N}(\sig))$ such that $\frob{\tau}{\chi_{C_{N}(\sig)}} \neq 0$ and $\frob{\tau}{\tet^{C_{N}(\sig)}} \neq 0$. Since $\frob{\tau}{\chi_{C_{N}(\sig)}} = \frob{\tau^{C_{P}(\sig)}}{\chi}$, \cite[Theorem~(13.29)]{Isa1} implies that $\frob{\htau^{P}}{\hchi} \neq 0$ where $\htau \in \irr_{\sig}(N)$ is such that $\pi_{N}(\htau) = \tau$. On the other hand, by induction, we also have $\frob{\htau}{\htet^{N}} \neq 0$, and thus $\htau^{P}$ is a constituent of $\htet^{P} = (\htet^{N})^{P}$. Since $\hchi$ is a constituent of $\htau^{P}$, we conclude that $\frob{\hchi}{\htet^{P}} \neq 0$, as required. Conversely, suppose that $\frob{\hchi}{\htet^{P}} \neq 0$; thus, $\frob{\hchi_{N}}{\htet^{N}} \neq 0$ (by Frobenius reciprocity). By \cite[Theorem~(13.27)]{Isa1}, there exists $\htau \in \irr_{\sig}(N)$ such that $\frob{\htau}{\hchi_{N}} \neq 0$. Then, $\frob{\htau}{\htet^{N}} \neq 0$, and so by induction we obtain $\frob{\tau}{\tet^{C_{N}(\sig)}} \neq 0$ where $\tau = \pi_{N}(\htau) \in \irr(C_{N}(\sig))$. Since $\frob{\htau}{\hchi_{N}} = \frob{\htau^{P}}{\hchi}$, \cite[Theorem~(13.29)]{Isa1} implies that $\frob{\tau^{C_{P}(\sig)}}{\chi} \neq 0$. Since $\tau^{C_{P}(\sig)}$ is a constituent of $\tet^{C_{P}(\sig)} = (\tet^{C_{N}(\sig)})^{C_{P}(\sig)}$, we conclude that $\frob{\chi}{\tet^{C_{P}(\sig)}} \neq 0$, and this completes the proof.
\end{proof}

We are now able to prove the following result.

\begin{lemma} \label{lem:basic1}
If $(\CD,\vphi)$ be a $\sig$-invariant basic pair for $P$, then the characters $\xd$ and $\sd$ of $C_{P}(\sig)$ have the same irreducible constituents. In particular, if $(\CD,\vphi)$ and $(\CD',\vphi')$ are $\sig$-invariant basic pairs for $P$, then $\frob{\xd}{\xi_{\CD'}(\vphi')} \neq 0$ if and only if $(\CD,\vphi) = (\CD',\vphi')$.
\end{lemma}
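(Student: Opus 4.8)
The plan is to reduce the statement to \refp{basic1} by recognising $\td$ as the Glauberman correspondent of $\htd$. Recall that, since $(\CD,\vphi)$ is $\sig$-invariant, the subgroup $Q_{\CD}$ is a $\sig$-invariant algebra subgroup of $P$, say $Q_{\CD}=1+\CI$ with $\CI=Q_{\CD}-1$ a $\sig$-invariant subalgebra of $\CJ$, and $\htd$ is a $\sig$-invariant linear (hence irreducible) character of $Q_{\CD}$; moreover $\hxd=\htd^{P}$ by \refeq{basic2}, while $\td$ is the restriction of $\htd$ to $C_{Q_{\CD}}(\sig)$ and $\xd=\td^{C_{P}(\sig)}$ by \refeq{xd}. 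First I would observe that $\td=\pi_{Q_{\CD}}(\htd)$: since $\htd(1)=1$, the restriction $(\htd)_{C_{Q_{\CD}}(\sig)}=\td$ is itself linear, so it is the \emph{unique} irreducible constituent of $(\htd)_{C_{Q_{\CD}}(\sig)}$ and it occurs there with multiplicity $1$; by the characterisation of the Glauberman correspondence (\cite[Theorem~13.1]{Isa1}) this forces $\td=\pi_{Q_{\CD}}(\htd)$.

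Next I would invoke \refp{basic1} with $Q=Q_{\CD}$, $\htet=\htd$ and $\tet=\td$: for every $\chi\in\irr(C_{P}(\sig))$, writing $\hchi\in\irr_{\sig}(P)$ for its Glauberman correspondent, one gets $\frob{\chi}{\xd}\neq 0$ if and only if $\frob{\hchi}{\hxd}\neq 0$. Now $X_{\CD}(\vphi)$ is, on the one hand, the set of irreducible constituents of $\sd$ and, on the other hand, by construction exactly the set of those $\chi\in\irr(C_{P}(\sig))$ whose Glauberman correspondent $\hchi$ is a constituent of $\hxd$. Combining these two descriptions with the equivalence just displayed shows that the set of irreducible constituents of $\sd$, namely $X_{\CD}(\vphi)$, coincides with $\set{\chi\in\irr(C_{P}(\sig))}{\frob{\chi}{\xd}\neq 0}$, i.e. with the set of irreducible constituents of $\xd$. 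This proves the first assertion.

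For the ``in particular'' part, since $\xd$ and $\xi_{\CD'}(\vphi')$ are genuine (induced) characters, $\frob{\xd}{\xi_{\CD'}(\vphi')}\neq 0$ holds if and only if they share an irreducible constituent, and by the first part this happens precisely when $X_{\CD}(\vphi)\cap X_{\CD'}(\vphi')\neq\emptyset$. By \reft{super1} (together with \refp{schnt}, which identifies the $X(\lam)$, $\lam\in\jcc$, with the sets $X_{\CD}(\vphi)$ indexed by $\sig$-invariant basic pairs), the family $\set{X_{\CD}(\vphi)}{(\CD,\vphi)\in\D}$ is a partition of $\irr(C_{P}(\sig))$, its parts being pairwise distinct because distinct $\sig$-invariant basic pairs give distinct supercharacters $\sd$; hence $X_{\CD}(\vphi)\cap X_{\CD'}(\vphi')\neq\emptyset$ forces $(\CD,\vphi)=(\CD',\vphi')$, while the converse is trivial. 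The only point that requires care is the identification $\td=\pi_{Q_{\CD}}(\htd)$ and the verification that \refp{basic1} applies verbatim — that is, that $\CI$ is a $\sig$-invariant subalgebra and $\htd$ a $\sig$-invariant irreducible character — both of which are already granted by the discussion preceding \refeq{xd}; everything else is bookkeeping with the definitions of $\sd$, $\xd$ and $X_{\CD}(\vphi)$.
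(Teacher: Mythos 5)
Your argument is correct and follows essentially the same route as the paper: both apply \refp{basic1} with $Q = Q_{\CD}$, $\htet = \htd$, $\tet = \td$, combine it with \reft{super2} (the description of $X_{\CD}(\vphi)$) to match the irreducible constituents of $\xd$ and $\sd$, and then deduce the ``in particular'' claim from the fact that distinct $\sig$-invariant basic pairs give distinct (orthogonal) supercharacters $\sd$. Your explicit check that $\td = \pi_{Q_{\CD}}(\htd)$ — immediate because $\htd$ is linear — is exactly the hypothesis needed to invoke that proposition and is left implicit in the paper, so it is a welcome addition rather than a deviation.
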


\begin{proof}
If $\chi \in \irr(C_{P}(\sig))$ is an irreducible constituent of $\xd = \td^{C_{P}(\sig)}$, then the previous proposition asserts that the Glauberman correspondent $\hchi \in \irr_{\sig}(P)$ of $\chi$ is a constituent of $\hxd = \htd^{C_{P}(\sig)},$ and thus $\chi$ is an irreducible constituent of $\sd$ (by \reft{super2}). Conversely, if $\chi \in \irr(C_{P}(\sig))$ is an irreducible constituent of $\sd$, then $\hchi$ is an irreducible constituent of $\hxd$, and so $\chi$ is an irreducible constituent of $\xd$ (by the previous proposition). For the last assertion, it is enough to recall that $\frob{\sd}{\vsig_{\CD'}(\vphi')} \neq 0$ if and only if $(\CD,\vphi) = (\CD',\vphi')$.
\end{proof}

We next show that $\xd$ is a superclass function on $C_{P}(\sig)$. Since the basic subset $\CD \sset \sn$ is $\sig$-invariant, we have a decomposition $\CD = \CD_{1} \sqcup (\CD_{1})^{\sig} \sqcup \CD_{0}$ where $\CD_{1}$ and $\CD_{0}$ are as in \refr{disjoint}. On the other hand, since the basic pair $(\CD,\vphi)$ is $\sig$-invariant, \refeqs{hxd}{xids} imply that the supercharacter $\hxd$ factorises as the product $$\hxd = \hxi_{\CD_{1}}(\vphi_{1})\, \hxi_{\CD_{1}}(\vphi_{1})^{\sig}\, \hxi_{\CD_{0}}(\vphi_{0})$$ where $\vphi_{1}$ and $\vphi_{0}$ denote the restriction of $\vphi$ to $\CD_{1}$ and $\CD_{0}$, respectively. Since $\hxi_{\CD_{1}}(\vphi_{1})$ and $\hxi_{\CD_{1}}(\vphi_{1})^{\sig}$ have the same restriction to $C_{P}(\sig)$, we conclude that
\begin{align*}
\hxd_{C_{P}(\sig)} &= \big( \hxi_{\CD_{1}}(\vphi_{1})_{C_{P}(\sig)} \big)^{2}\cdot \hxi_{\CD_{0}}(\vphi_{0})_{C_{P}(\sig)} \\ &= \prod_{(i,j) \in \CD_{1}} \big( \hxi_{i,j}(\vphi(i,j))_{C_{P}(\sig)} \big)^{2} \cdot \prod_{(i,j) \in \CD_{0}} \hxi_{i,j}(\vphi(i,j))_{C_{P}(\sig)}.
\end{align*}

\begin{remark}
We observe that, for all $(i,j) \in \sn$ and all $\alp \in \kx$, the square power $\hxi_{i,j}(\alp)^{2}$ is a superclass function of $P$, and thus it decomposes as a linear combination of supercharacters (with integer coefficients); furthermore, from \cite[Lemma~11]{And1} (see also \refp{sch0}) it follows that $\hxi_{i,j}(2\alp)$ is an irreducible constituent of $\hxi_{i,j}(\alp)^{2}$ with multiplicity equal to $1+(q-1)(j-i+1)$ where $q = |\ks|$.
\end{remark}

Henceforth, for every $(i,j) \in \sn$ with $j \leq n-i+1$ and every $\alp \in \kx$, we will simplify the notation and write $\xi_{i,j}(\alp)$ (resp., $\vsig_{i,j}(\alp)$) to denote the character $\xd$ (resp., the supercharacter $\sd$) of $C_{P}(\sig)$ where $(\CD,\vphi)$ is the $\sig$-invariant basic pair with $\CD = \{(i,j),(n-j+1,n-i+1)\}$ and $\alp = \vphi(i,j)$; as before, we refer to $\xi_{i,j}(\alp)$ as the \textit{$(i,j)$th elementary character of $C_{P}(\sig)$ associated with $\alp$}. Similarly to the case of the unitriangular group, we have the following factorisation; for a proof, see \cite[Proposition~3]{AndNet1}.

\begin{theorem} \label{thm:basic2}
If $(\CD,\vphi)$ is a $\sig$-invariant basic pair for $P$, then $$\xd = \prod_{(i,j) \in \CD'} \xi_{i,j}(\vphi(i,j))$$ where $\CD' = \set{(i,j) \in \CD}{j \leq n-i+1}$.
\end{theorem}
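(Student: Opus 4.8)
The plan is to prove \reft{basic2} by induction on $|\CD'|$, deducing the inductive step from Mackey's theorem and the projection formula once we know that $C_{P}(\sig)$ factors as a product of two suitable subgroups. To set things up, recall from \refr{disjoint} that the $\sig$-invariant basic subset $\CD$ splits as $\CD = \CD_{1}\sqcup\CD_{1}^{\sig}\sqcup\CD_{0}$, so that $\CD' = \CD_{1}\sqcup\CD_{0}$; for each $(i,j)\in\CD'$ write $\CD^{ij} = \{(i,j),(n-j+1,n-i+1)\}$ for the $\sig$-invariant basic set (a singleton when $j = n-i+1$) defining the elementary character $\xi_{i,j}(\vphi(i,j))$, so that $\xi_{i,j}(\vphi(i,j)) = (\tau^{ij})^{C_{P}(\sig)}$ for a linear character $\tau^{ij}$ of $C_{Q_{\CD^{ij}}}(\sig)$. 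From the definitions one checks that $Q_{\CD} = \bigcap_{(i,j)\in\CD'}Q_{\CD^{ij}}$, hence $C_{Q_{\CD}}(\sig) = \bigcap_{(i,j)\in\CD'}C_{Q_{\CD^{ij}}}(\sig)$; and since $\ld = \prod_{(a,b)\in\CD}\lam_{a,b}(\vphi(a,b))$ regroups as $\prod_{(i,j)\in\CD'}\lam_{\CD^{ij}}(\vphi|_{\CD^{ij}})$, the linear character $\td$ is the product of the restrictions to $C_{Q_{\CD}}(\sig)$ of the characters $\tau^{ij}$, $(i,j)\in\CD'$.

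For the induction, the cases $|\CD'|\le 1$ are exactly the definition of the elementary characters. Assume $|\CD'|\ge 2$ and pick an \emph{extremal} element $(i_{0},j_{0})\in\CD'$, say with $i_{0}$ as large as possible. Put $\CE = \CD\setminus\CD^{i_{0}j_{0}}$, which is a $\sig$-invariant basic subset with $\CE' = \CD'\setminus\{(i_{0},j_{0})\}$, and $\psi = \vphi|_{\CE}$; by the inductive hypothesis $\xi_{\CE}(\psi) = \prod_{(i,j)\in\CE'}\xi_{i,j}(\vphi(i,j))$, so it remains to prove $\xd = \xi_{\CE}(\psi)\cdot\xi_{i_{0},j_{0}}(\vphi(i_{0},j_{0}))$. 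Write $A = C_{Q_{\CE}}(\sig)$ and $B = C_{Q_{\CD^{i_{0}j_{0}}}}(\sig)$, so that $A\cap B = C_{Q_{\CD}}(\sig)$, and $\xi_{\CE}(\psi) = (\tau_{\CE})^{C_{P}(\sig)}$, $\xi_{i_{0},j_{0}}(\vphi(i_{0},j_{0})) = (\tau_{0})^{C_{P}(\sig)}$ for linear characters $\tau_{\CE}$ of $A$ and $\tau_{0}$ of $B$. Granting the factorisation $C_{P}(\sig) = AB$, Mackey's theorem (a single double coset) gives $\bigl((\tau_{\CE})^{C_{P}(\sig)}\bigr)_{B} = (\tau_{\CE}|_{A\cap B})^{B}$, and then two applications of the projection formula together with transitivity of induction yield
$$(\tau_{\CE})^{C_{P}(\sig)}\cdot(\tau_{0})^{C_{P}(\sig)} = \bigl(\tau_{\CE}|_{A\cap B}\cdot\tau_{0}|_{A\cap B}\bigr)^{C_{P}(\sig)} = (\td)^{C_{P}(\sig)} = \xd,$$
the middle equality being the bookkeeping of the previous paragraph. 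This closes the induction.

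The crux — and the step I expect to be the main obstacle — is the group identity $C_{P}(\sig) = C_{Q_{\CE}}(\sig)\cdot C_{Q_{\CD^{i_{0}j_{0}}}}(\sig)$. Here the basic condition on $\CD$ is essential: with $(i_{0},j_{0})$ chosen extremal, the linear constraints cutting out $Q_{\CE}$ and those cutting out $Q_{\CD^{i_{0}j_{0}}}$ involve disjoint sets of matrix entries, so that $[C_{P}(\sig):C_{Q_{\CD}}(\sig)]$ factors as $[C_{P}(\sig):C_{Q_{\CE}}(\sig)]\cdot[C_{P}(\sig):C_{Q_{\CD^{i_{0}j_{0}}}}(\sig)]$; since $C_{Q_{\CE}}(\sig)\cdot C_{Q_{\CD^{i_{0}j_{0}}}}(\sig)\sset C_{P}(\sig)$ and $C_{Q_{\CE}}(\sig)\cap C_{Q_{\CD^{i_{0}j_{0}}}}(\sig) = C_{Q_{\CD}}(\sig)$, a cardinality count forces the multiplication map $A\x B\to C_{P}(\sig)$ to be surjective. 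Making this rigorous means computing with the explicit block description \refeq{e1} of $C_{P}(\sig)$ and the explicit forms of the subgroups $Q_{i,j}$, separating according to the type of classical group ($Sp_{2m}(\k)$, $O^{+}_{2m}(\k)$, $O_{2m+1}(\k)$, $U_{n}(\k)$) and according to the position of $(i_{0},j_{0})$ relative to the middle blocks, using the relations recorded in \refr{disjoint}; it is routine but somewhat lengthy. Alternatively, as the paper indicates, one may simply invoke \cite[Proposition~3]{AndNet1}, where precisely this factorisation is carried out.
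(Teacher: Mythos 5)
The paper gives no internal proof of \reft{basic2} at all: it simply points to \cite[Proposition~3]{AndNet1}. Your induction is therefore a genuinely different, more self-contained route, and its reductive skeleton is correct: with $A = C_{Q_{\CE}}(\sig)$, $B = C_{Q_{\CD^{i_{0}j_{0}}}}(\sig)$, one has $A \cap B = C_{Q_{\CD}}(\sig)$ because $Q_{\CD} = Q_{\CE} \cap Q_{\CD^{i_{0}j_{0}}}$, the product of the two restricted linear characters on $A \cap B$ is indeed $\td$ (since $\ld$ is the product of the corresponding linear characters of $\CJ^{+}$ and $\td = \ld \circ \VPsi$ on $C_{Q_{\CD}}(\sig)$), and once $C_{P}(\sig) = AB$ is known, the single-double-coset Mackey formula together with the projection formula and transitivity of induction gives exactly the identity you state. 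The base case and the bookkeeping are fine.

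The one genuine gap is the factorisation $C_{P}(\sig) = AB$, which you argue by counting ``disjoint matrix entries'' directly inside $C_{P}(\sig)$ after an extremal choice of $(i_{0},j_{0})$. As written this is not yet a proof, and it is also the awkward place to count: inside $C_{P}(\sig)$ the entries at a position $(a,b)$ and at its mirror $(n-b+1,n-a+1)$ are not independent, so constraints that are independent on $P$ may collapse on the fixed-point group, and in the cases $O_{2m+1}(\k)$ and $U_{2m+1}(\k)$ the subgroup of matrices \refeq{e1} with $x = I_{m}$ carries quadratic relations, so a naive entry count is delicate. A cleaner way to close the gap, using only tools the paper already employs, is to do the counting in $P$ and then transfer: because $\CD$ is basic, the coordinate positions cut out by the various $Q_{i,j}$, $(i,j) \in \CD$, are pairwise disjoint (row strips when $j \leq m$, row-and-column strips when $i \leq m < j$, column strips when $m < i$), so the subalgebras satisfy $\CI_{\CE} + \CI_{\CD^{i_{0}j_{0}}} = \CJ$ and a cardinality count gives $P = Q_{\CE}\, Q_{\CD^{i_{0}j_{0}}}$; no extremality is needed. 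Now note that $x \mapsto x^{\sig}$ is an automorphism of $P$, so $(yz)^{\sig} = y^{\sig} z^{\sig}$; for $x \in C_{P}(\sig)$ the fibre $\set{(y,z) \in Q_{\CE} \x Q_{\CD^{i_{0}j_{0}}}}{yz = x}$ is a set of odd order $|Q_{\CD}|$ on which $Q_{\CD}$ acts transitively, compatibly with $\sig$, so Glauberman's Lemma \cite[Lemma~13.8]{Isa1} produces a $\sig$-fixed factorisation $x = yz$ with $y \in A$, $z \in B$, i.e.\ $C_{P}(\sig) = AB$. With that inserted your induction is complete; your alternative fallback, citing \cite[Proposition~3]{AndNet1}, is of course exactly what the paper itself does.
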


In view of this theorem, the goal of proving that the $\xd$ is a superclass function of $C_{P}(\sig)$ reduces to proving that this holds for every elementary character.

\begin{lemma} \label{lem:elem1}
Let $(i,j) \in \sn$ be such that $< j \leq n-i+1$, and let $\alp \in \kx$. Then, $\xi_{i,j}(\alp) = \hxi_{i,j}(2\alp)_{C_{P}(\sig)}$, and hence $\xi_{i,j}(\alp)$ is a superclass function on $C_{P}(\sig)$. In particular, there exists a constant $n_{i,j}(\alp)$ such that $\xi_{i,j}(\alp) = n_{i,j}(\alp) \vsig_{i,j}(\alp)$.
\end{lemma}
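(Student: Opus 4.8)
The plan is to prove the stated identity $\xi_{i,j}(\alp)=\hxi_{i,j}(2\alp)_{C_{P}(\sig)}$ first; the two remaining assertions then follow immediately. Granting the identity, $\xi_{i,j}(\alp)$ is the restriction of a supercharacter of $P$, hence constant on each intersection $\HCK\cap C_{P}(\sig)$ with $\HCK\in\scl_{\sig}(P)$, i.e.\ on each superclass of $C_{P}(\sig)$; so it is a superclass function. Moreover $\xi_{i,j}(\alp)=\xd$ for the $\sig$-invariant basic pair $\CD=\{(i,j),(n-j+1,n-i+1)\}$, $\vphi(i,j)=\alp$, so by \refl{basic1} the characters $\xi_{i,j}(\alp)$ and $\vsig_{i,j}(\alp)$ have the same irreducible constituents. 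Expanding the superclass function $\xi_{i,j}(\alp)$ in the basis $\set{\vsig_{\CD'}(\vphi')}{(\CD',\vphi')\text{ a }\sig\text{-invariant basic pair}}$ of $\scf(C_{P}(\sig))$ and using that the constituent sets $X_{\CD'}(\vphi')$ partition $\irr(C_{P}(\sig))$ (\reft{super1}), all coefficients vanish except the one attached to $\vsig_{i,j}(\alp)$; hence $\xi_{i,j}(\alp)=n_{i,j}(\alp)\,\vsig_{i,j}(\alp)$ for a single positive constant $n_{i,j}(\alp)$.

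For the identity, start from $\xi_{i,j}(\alp)=\td^{C_{P}(\sig)}$, where $\td$ is the restriction to $C_{Q_{\CD}}(\sig)$ of the linear character $x\mapsto\ld(\VPsi(x))$ of $Q_{\CD}$. If $x\in C_{Q_{\CD}}(\sig)$ then $\VPsi(x)\in C_{\CJ}(\sig)$, so the relations \refeq{vphi} of \refr{disjoint}, together with the $\sig$-invariance of $\tet$, give
$$\ld(\VPsi(x))=\tet\!\big(\alp\,\VPsi(x)_{ij}\big)\cdot\tet\!\big(\alp\,\VPsi(x)_{ij}\big)=\tet\!\big(2\alp\,\VPsi(x)_{ij}\big);$$
that is, $\td$ is the restriction to $C_{Q_{\CD}}(\sig)$ of the linear character $\htet_{i,j}(2\alp)$ of $L_{i,j}$ which, by \refp{lin1}, induces $\hxi_{i,j}(2\alp)$. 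It therefore suffices to verify, from the explicit block form \refeq{e1} of the elements of $C_{P}(\sig)$, that $C_{Q_{\CD}}(\sig)=L_{i,j}\cap C_{P}(\sig)$ and that $L_{i,j}\backslash P/C_{P}(\sig)$ is a single double coset; Mackey's formula then gives
$$\hxi_{i,j}(2\alp)_{C_{P}(\sig)}=\big(\htet_{i,j}(2\alp)^{P}\big)_{C_{P}(\sig)}=\big(\htet_{i,j}(2\alp)_{C_{L_{i,j}}(\sig)}\big)^{C_{P}(\sig)}=\td^{C_{P}(\sig)}=\xi_{i,j}(\alp).$$
For $j\le m$ both points are transparent: there $Q_{\CD}=L_{i,j}\cap L_{i,j}^{\sig}$ and $L_{i,j}$, $L_{i,j}^{\sig}$ have the same $\sig$-fixed points, while a dimension count through the Cayley transform yields $[P:L_{i,j}]=[C_{P}(\sig):C_{L_{i,j}}(\sig)]$, so $|L_{i,j}|\,|C_{P}(\sig)|=|P|\,|L_{i,j}\cap C_{P}(\sig)|$. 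When $j=i+1$ one has $L_{i,j}=P$ and the identity is trivial; when $m<j$, one replaces $L_{i,j}$ here by $Q_{i,j}$, from which $\hxi_{i,j}(2\alp)$ is equally induced by \refeq{basic2}.

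The main obstacle is precisely this block-matrix bookkeeping in the case $m<j$: one must show that, on $C_{P}(\sig)$, the extra vanishing conditions built into $Q_{i,j}$ are automatic — here the isometry relation $x^{\ast}ux=u$ satisfied by the elements of $C_{P}(\sig)$ enters — and that the pertinent double-coset space is trivial, and these have to be checked separately for $Sp_{2m}(\k)$, $O^{+}_{2m}(\k)$, $O_{2m+1}(\k)$ and $U_{n}(\k)$. Everything else in the argument is formal.
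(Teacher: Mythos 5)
Your argument is correct and follows essentially the same route as the paper: realise $\hxi_{i,j}(2\alp)$ as induced from a linear character of a suitable subgroup, use triviality of the relevant double-coset space together with Mackey to identify $\hxi_{i,j}(2\alp)_{C_{P}(\sig)}$ with $\td^{C_{P}(\sig)}=\xi_{i,j}(\alp)$, and then invoke \refl{basic1} to force proportionality to $\vsig_{i,j}(\alp)$. The only difference is that the paper treats all cases uniformly, inducing from $Q=Q_{i,j}$ the linear character $x\mapsto\tet(2\alp x_{i,j})$ of \cite[Lemma~2.1]{AndNet2} and simply asserting the factorisation $P=Q\,C_{P}(\sig)$, so the case-by-case block-matrix verifications you defer in the range $m<j$ reduce to that single factorisation (which the paper likewise states without proof).
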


\begin{proof}
For simplicity, we set $\hxi = \hxi_{i,j}(2\alp)$; as for \refeq{basic2}, \cite[Lemma~2.1]{AndNet2} implies that $\hxi = \htau^{P}$ where $\htau = \htau_{i,j}(\alp)$ is the linear character of $Q = Q_{i,j}$ defined by $$\htau(x) = \tet(2\alp x_{i,j})$$ for all $x \in Q$. Since $P = QC_{P}(\sig)$, we obtain $$\hxi_{C_{P}(\sig)} = \big( \htau_{Q \cap C_{P}(\sig)} \big)^{C_{P}(\sig)}= \big( \htau_{C_{Q}(\sig)} \big)^{C_{P}(\sig)}$$ (by Mackey's criterion; see \cite[Exercise~6.1]{Isa1}). Since $\htau(x) = \tet(2\alp x_{i,j}) = \tet(\alp x_{i,j})^{2} = \tau_{\CD}(\vphi)(x)$ for all $x \in C_{Q}(\sig)$, we conclude that $\hxi_{C_{P}(\sig)} = \xi_{i,j}(\alp)$, and thus $\xi_{i,j}(\alp)$ is a superclass fuction on $C_{P}(\sig)$ (because $\hxi$ is a superclass function on $P$). It follows that $\xi_{i,j}(\alp)$ is a linear combination of the supercharacters of $C_{P}(\sig)$, and hence $\xi_{i,j}(\alp)$ must be a multiple of $\vsig_{i,j}(\alp)$ (by \refl{basic1}).
\end{proof}

On the other hand, we consider the restriction to $C_{P}(\sig)$ of a $\sig$-invariant elementary character $\hxi_{i,n-i+1}(\alp)$ where $i \leq m$ and $\alp \in \kx$; the assumption of being $\sig$-invariant implies that, either $G = Sp_{2m}(\k)$, or $G = U_{n}(\k)$ and $\alp \in \k$ satisfies $\alp^{q} = -\alp$ where $q = |\ks|$. Since $\hxi_{i,n-i+1}(\alp)$ is an irreducible character of $P$ (\cite[Lemma~2]{And3}, or \cite[Corollary~5.11]{DiaIsa}), Glauberman's Theorem guarantees that its restriction to $C_{P}(\sig)$ has a unique irreducible constituent with odd multiplicity, and this clearly implies that there exists a positive integer $m$ such that $\xi_{i,n-i+1}(\alp) = m \chi$ where $\chi = \pi_{P}\big( \hxi_{i,n-i+1}(\alp) \big)$. In fact, we have the following.

\begin{lemma} \label{lem:elem2}
If $i < m$ and $\alp \in \kx$ are as above, then $\xi_{i,n-i+1}(\alp)$ is an irreducible character of $C_{P}(\sig)$, and $\vsig_{i,n-i+1}(\alp) = q^{m-i+1} \xi_{i,n-i+1}(\alp)$ where $q = |\ks|$.
\end{lemma}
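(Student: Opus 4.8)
The plan is to combine the Glauberman correspondence with the explicit description of $\Ome_{\CD}(\vphi)$ available in this "anti-diagonal" case, computing the multiplicity $m = \frob{\xi_{i,n-i+1}(\alp)}{\chi}$ directly and showing it equals $1$. First I would recall that, by the preamble to \refl{elem2}, $\hxi = \hxi_{i,n-i+1}(\alp)$ is irreducible and $\sig$-invariant (here $\CD = \{(i,n-i+1)\}$ is itself $\sig$-invariant, so $\hxi = \hxd$), and hence $\chi = \pi_{P}(\hxi) = \tet^{C_{P}(\sig)}$ for the $\sig$-invariant linear character $\htet = \htet_{i,n-i+1}(\alp)$ of $L = L_{i,n-i+1}$, by \cite[Theorem~2.1]{And4}; note $Q_{\CD} = L$ here since $j = n-i+1$ already, so $C_{Q_{\CD}}(\sig) = C_{L}(\sig)$ and $\td = \tet$. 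Thus $\xi_{i,n-i+1}(\alp) = \xd = \tet^{C_{P}(\sig)}$, which is a single (possibly reducible) induced character; I must show it is irreducible.

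Next I would invoke \reft{super2}/\reft{super3}: since $\hxd$ has a linear constituent (indeed it equals $\htet^{P}$), \reft{super3} gives $\sd = \xd$ in the earlier notation $\xi_{\lam} = (\tet_{\lam})^{C_{P}(\sig)}$ --- but wait, that identification was for the algebra-group-theoretic $\xi_{\lam}$; here I instead use the general identity $\sd = \sum_{\chi'\in X_{\CD}(\vphi)} \chi'(1)\chi'$ from \reft{super2}, together with \refl{basic1}, which tells us $\xi_{i,n-i+1}(\alp)$ and $\vsig_{i,n-i+1}(\alp)$ have the same irreducible constituents. Since by Glauberman $\hxi$ is irreducible, the set $X_{\CD}(\vphi)$ consists of the single character $\chi = \pi_{P}(\hxi)$, so $\sd = \chi(1)\,\chi$ and $\xd$ is a (positive integer) multiple of $\chi$ --- in particular $\xi_{i,n-i+1}(\alp)$ is automatically a scalar multiple of the \emph{irreducible} $\chi$, hence the assertion "$\xi_{i,n-i+1}(\alp)$ is an irreducible character" will follow once I show that scalar is $1$, i.e. that $\tet^{C_{P}(\sig)}$ is irreducible. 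For that, it suffices to check $\frob{\tet^{C_{P}(\sig)}}{\tet^{C_{P}(\sig)}} = 1$, i.e. that $\tet$ induces irreducibly from $C_{L}(\sig)$ to $C_{P}(\sig)$; equivalently, by Mackey, that distinct double cosets $C_{L}(\sig)\,g\,C_{L}(\sig)$ other than the trivial one carry no $\tet$-intertwining --- this is the matrix computation on the groups in \refeq{e1}, analogous to the classical fact that $\hxi_{i,n-i+1}(\alp)$ is irreducible in $P$.

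Finally, for the degree relation I would compute both sides. On one hand $\vsig_{i,n-i+1}(\alp)(1) = |\Ome_{P}(\lam)|$ where $\lam = \lam_{i,n-i+1}(\alp)$, by \refeq{zet2} evaluated at $x = 1$ (so $\VPsi(1) = 0$ and every $\mu(0) = 1$); and by \refp{fixed}, $\Ome_{P}(\lam) = \set{x\inv\lam x^{\sig}}{x\in P}$ has size $|P : \Stab|$ where $\Stab = \set{x\in P}{x\inv\lam x^{\sig} = \lam}$. On the other hand $\xi_{i,n-i+1}(\alp)(1) = |C_{P}(\sig) : C_{L}(\sig)|$, and $\chi(1) = \xi_{i,n-i+1}(\alp)(1)$ once irreducibility is known. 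So the constant is $|\Ome_{P}(\lam)| / |C_{P}(\sig) : C_{L}(\sig)|$, and I must show this equals $q^{m-i+1}$. I would evaluate $|\Ome_{P}(\lam)| = |P\lam\cap(\text{$\sig$-fixed})|$ using $P\lam = \lam + \CL(\lam)\ort$ (valid since $\hxi$ has a linear constituent, \cite[Lemma~4.2]{DiaIsa}, \cite[Corollary~5.12]{DiaIsa}), reducing to counting $\sig$-fixed characters in a coset of $\CL(\lam)\ort$; concretely this is the index of $C_{\CJ}(\sig) \cap (\CL(\lam) + [\CJ,\sig])$ in $C_{\CJ}(\sig)$, which --- tracking the positions $(i,k)$ for $i < k \le n-i+1$ and using the symplectic-versus-unitary relation $\alp^{q} = \mp\alp$ from \refr{disjoint}/\refeq{vphi} --- works out to $q^{m-i+1}$. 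The main obstacle is this last explicit index computation: one must carefully identify which of the $2(m-i)+1$ "free" coordinates in the relevant quotient survive the $\sig$-fixed-point and $\CL(\lam)$-kernel conditions, and the symplectic/unitary case distinction (and the constraint on $\alp$) is exactly what makes the count come out to the stated power of $q$ uniformly; I would handle it by reducing, as in \refl{basic1} and \reft{basic2}, to the single anti-diagonal entry and then doing the rank computation on the $2\times 2$-block-triangular model of $C_{P}(\sig)$ from \refeq{e1}.
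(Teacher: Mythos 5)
Your middle step is sound and matches the paper's logic: since $\hxi_{i,n-i+1}(\alp)$ is irreducible and $\sig$-invariant, $X_{\CD}(\vphi)$ is the singleton $\{\chi\}$ with $\chi = \pi_{P}(\hxi_{i,n-i+1}(\alp))$, so \reft{super2} gives $\sd = \chi(1)\chi$, and \refl{basic1} shows $\xi_{i,n-i+1}(\alp)$ is a multiple of $\chi$. But the two steps that carry the actual content of \refl{elem2} are not done. First, a bookkeeping error: $Q_{\CD}$ is \emph{not} $L_{i,n-i+1}$ when $i \leq m < j$; by definition $Q_{i,n-i+1}$ imposes the row conditions $x_{i,k}=0$ only for $i<k\leq m$ together with the column conditions $x_{k,n-i+1}=0$ for $i<k\leq m$, whereas $L_{i,n-i+1}$ imposes the full row conditions $x_{i,k}=0$ for $i<k<n-i+1$. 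So the character you induce from $C_{L}(\sig)$ is not, on its face, the $\xi_{i,n-i+1}(\alp)$ of the statement, and the index you would compute is not obviously the right one. Second, the irreducibility of the induced character --- which is precisely what must be proved --- is deferred to an unspecified Mackey double-coset ``matrix computation analogous to the classical fact''; that is the whole difficulty. The paper proves it by a concrete Clifford-theory argument: it writes $C_{P}(\sig) = P_{0}\ltimes N$ (with $P_{0}$ the unitriangular block and $N$ the normal subgroup of class at most $2$ given by $x = I_{m}$ in \refeq{e1}), checks that $C_{Q_{\CD}}(\sig)$ is the inertia group of the restriction of $\tau_{i,n-i+1}(\alp)$ to $N$, and concludes irreducibility of the induced character from Clifford's theorem. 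Your proposal contains no substitute for this identification of an inertia group (or any other mechanism forcing the Mackey intertwining spaces to vanish).

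The plan for the constant is also flawed, and in any case superfluous. You invoke $P\lam = \lam + \CL(\lam)\ort$ ``valid since $\hxi$ has a linear constituent'', but for $i<m$ the elementary character $\hxi_{i,n-i+1}(\alp)$ is an \emph{irreducible} character of $P$ of degree $q^{\,n-2i} > 1$, hence has no linear constituent at all; \cite[Corollary~5.12]{DiaIsa} does not apply, $\CL(\lam)$ is only a right ideal, and the identity $P\lam = \lam + \CL(\lam)\ort$ fails in this situation (the hypothesis was essential in \reft{super3}). So the proposed count of $|\Ome_{P}(\lam)|$ has no justification as written, and you yourself flag it as the main obstacle. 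The paper avoids it entirely: once irreducibility is established, $\xi_{i,n-i+1}(\alp) = \pi_{P}\big(\hxi_{i,n-i+1}(\alp)\big)$, so \reft{super2} gives $\vsig_{i,n-i+1}(\alp) = \xi_{i,n-i+1}(\alp)(1)\,\xi_{i,n-i+1}(\alp)$, and the constant is simply the degree $|C_{P}(\sig):C_{Q_{\CD}}(\sig)|$, read off directly from the defining conditions of $Q_{i,n-i+1}$ inside the block description \refeq{e1} --- no orbit counting and no case analysis over $Sp_{2m}(\k)$ versus $U_{n}(\k)$ is needed.
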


\begin{proof}
For simplicity, we set $\xi = \xi_{i,n-i+1}(\alp)$ and $\tau = \tau_{i,n-i+1}(\alp)$; hence, $\tau$ is a linear character of $Q = Q_{i,n-i+1}$ and $\xi = \tau^{C_{P}(\sig)}$. We observe that the group $C_{P}(\sig)$ factorises as the semidirect product $$C_{P}(\sig) = P_{0} \ltimes N$$ where $P_{0}$ is a subgroup (naturally) isomorphic to the unitriangular group $\UT_{m}(q)$ and $N$ is a normal subgroup of nilpotency class less than or equal $2$; referring to \refeq{e1}, $P_{0}$ consists of all (block) matrices with $u = 0$ and $z = 0$, and $N$ consists of all matrices with $x = I_{m}$. It is routine to check that $Q$ equals the inertia group $I_{P}(\tau_{N})$ in $P$ of the restriction $\tau_{N}$ of $\tau$ to $N$; in other words, this means tat $Q = \set{x \in P}{\tau(xyx^{-1}) = \tau(y) \text{ for all } y \in N}$. By Clifford's theorem (see \cite[Theorem~6.11]{Isa2}), we conclude that $\xi = \tau^{P}$ is an irreducible character. By the above, this implies that $\xi = \pi_{P}\big(\hxi_{i,n-i+1}(\alp)\big)$, and thus $\vsig_{i,n-i+1}(\alp) = \xi(1)\, \xi$ (by \reft{super2}). The result follows because $\xi(1) = |C_{P}(\sig):C_{Q}(\sig)| = q^{m-i+1}$.
\end{proof}

Finally, we deduce the following (required) result.

\begin{proposition} \label{prop:basic2}
If $(\CD,\vphi)$ is a $\sig$-invariant basic pair for $P$, then $\xd$ is a superclass function of $C_{P}(\sig)$, and hence there exists a constant $n_{\CD,\vphi}$ such that $\xd = n_{\CD,\vphi} \sd$.
\end{proposition}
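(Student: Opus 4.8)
The plan is to reduce the statement to the elementary characters of $C_{P}(\sig)$ — for which it has essentially been proved in \refl{elem1} and \refl{elem2} — and then to isolate the proportionality constant using \refl{basic1}. By \reft{basic2} we have
$$\xd = \prod_{(i,j) \in \CD'} \xi_{i,j}(\vphi(i,j)), \qquad \CD' = \set{(i,j) \in \CD}{j \leq n-i+1}.$$
Using the decomposition $\CD = \CD_{1} \sqcup (\CD_{1})^{\sig} \sqcup \CD_{0}$ of \refr{disjoint} one checks that $\CD' = \CD_{1} \sqcup \CD_{0}$, since every $(i,j) \in (\CD_{1})^{\sig}$ has column index strictly larger than $n-i+1$. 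For $(i,j) \in \CD_{1}$, \refl{elem1} gives $\xi_{i,j}(\vphi(i,j)) = \hxi_{i,j}(2\vphi(i,j))_{C_{P}(\sig)}$, which is a superclass function of $C_{P}(\sig)$ because $\hxi_{i,j}(2\vphi(i,j))$ is a superclass function of $P$ and every superclass of $C_{P}(\sig)$ lies inside a superclass of $P$. For $(i,n-i+1) \in \CD_{0}$, \refl{elem2} gives $\xi_{i,n-i+1}(\vphi(i,n-i+1)) = q^{-(m-i+1)}\vsig_{i,n-i+1}(\vphi(i,n-i+1))$ with $q = |\ks|$, a scalar multiple of a supercharacter of $C_{P}(\sig)$ and hence a superclass function by \reft{super2}.

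Next I would use that, since the superclasses partition $C_{P}(\sig)$, a pointwise product of functions each constant on every superclass is again constant on every superclass; thus $\xd$ is a superclass function of $C_{P}(\sig)$. Now, as $(\CD',\vphi')$ ranges over the $\sig$-invariant basic pairs for $P$, the supercharacters $\vsig_{\CD'}(\vphi')$ are superclass functions (\reft{super2}), are pairwise orthogonal (by \reft{super1}, the associated sets $X_{\CD'}(\vphi')$ partition $\irr(C_{P}(\sig))$), hence linearly independent, and by \reft{scf1} their number equals the number of superclasses, i.e. the dimension of the space of superclass functions of $C_{P}(\sig)$; so they form an orthogonal basis of that space. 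Expanding $\xd$ in this basis and invoking \refl{basic1} — which asserts $\frob{\xd}{\vsig_{\CD'}(\vphi')} \neq 0$ only when $(\CD',\vphi') = (\CD,\vphi)$ — we conclude $\xd = n_{\CD,\vphi}\,\sd$ with $n_{\CD,\vphi} = \frob{\xd}{\sd}/\frob{\sd}{\sd}$. This constant is positive, and in fact the reciprocal of a positive integer: comparing the irreducible expansions $\xd = \sum_{\chi \in X_{\CD}(\vphi)} \frob{\xd}{\chi}\,\chi$ and $\sd = \sum_{\chi \in X_{\CD}(\vphi)} \chi(1)\,\chi$ (\refl{basic1} and \reft{super2}) forces $\frob{\xd}{\chi} = n_{\CD,\vphi}\chi(1)$ for every $\chi$; alternatively its exact value is read off by collecting the integers $n_{i,j}(\vphi(i,j))$ over $\CD_{1}$ and the powers $q^{-(m-i+1)}$ over $\CD_{0}$ supplied by \refl{elem1} and \refl{elem2}.

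The only point that needs care is the bookkeeping at the start: one must verify that, after restriction to $C_{P}(\sig)$, the product of \reft{basic2} runs over exactly $\CD_{1} \sqcup \CD_{0}$, so that the off-anti-diagonal factors are handled by \refl{elem1}, the anti-diagonal ones by \refl{elem2}, and no factor is left unaccounted for. Everything else — closure of superclass functions under pointwise products, and the orthogonal-basis argument that isolates $\xd$ as a multiple of $\sd$ — is routine given the results already proved, so I do not expect a substantive new difficulty here.
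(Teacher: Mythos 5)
Your proof is correct and follows essentially the same route as the paper: the factorisation of \reft{basic2} together with \refl{elem1} and \refl{elem2} shows $\xd$ is a superclass function, and then the fact that the supercharacters form an orthogonal basis of the space of superclass functions combined with \refl{basic1} forces $\xd$ to be a multiple of $\sd$. The extra bookkeeping you supply (that the product in \reft{basic2} runs over $\CD_{1}\sqcup\CD_{0}$, and closure of superclass functions under pointwise products) is exactly what the paper leaves implicit, so there is no substantive difference.
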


\begin{proof}
By \reft{basic2} and by the two previous lemmas, it follows that $\xd$ is in fact a superclass function. Since supercharacters form a basis of the complex vector space consisting of all superclass functions (because they are orthogonal and in the same number as superclasses), we conclude that $\xd$ is a linear combination of supercharacters, and \refl{basic1} implies that $\xd$ must a multiple of $\sd$.
\end{proof}

As a consequence, we obtain the following result (see \cite{AndNet3} for the symplectic and orthogonal cases).

\begin{theorem} \label{thm:schth2}
If\; $\D$ denotes the set of all $\sig$-invariant basic pairs for $P$, then the sets $\CX' = \set{\xd}{(\CD,\vphi) \in \D}$ and $\CY = \set{\KD}{(\CD,\vphi) \in \D}$ form a supercharacter theory for $C_{P}(\sig)$.
\end{theorem}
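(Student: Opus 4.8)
The plan is to obtain \reft{schth2} as an essentially formal consequence of \reft{schth}, \refp{basic2} and \refl{basic1}, by checking the equivalent axioms (S1')--(S3') of a supercharacter theory recorded in \refs{intro}; recall that those axioms allow the members of $\CX'$ to be positive-integer multiples of the characters $\sig_{X}$, which is exactly the relationship between $\xd$ and $\sd$ established earlier.

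First I would observe that $\CX'$ really is a collection of characters of $C_{P}(\sig)$, since by construction each $\xd = \td^{C_{P}(\sig)}$ is an induced character. Next, \refp{basic2} supplies, for every $\sig$-invariant basic pair $(\CD,\vphi)$, a positive integer $n_{\CD,\vphi}$ with $\xd = n_{\CD,\vphi}\sd$, and in particular asserts that $\xd$ is a superclass function of $C_{P}(\sig)$; this is precisely condition (S3') for the partition $\CY = \set{\KD}{(\CD,\vphi) \in \D}$ into superclasses. Since $n_{\CD,\vphi} \neq 0$, the characters $\xd$ and $\sd$ have the same irreducible constituents, namely the set $X_{\CD}(\vphi)$ (this is also the first assertion of \refl{basic1}); and since $(\CX,\CY)$ is already a supercharacter theory by \reft{schth}, the sets $X_{\CD}(\vphi)$ --- being the collections of irreducible constituents of the various $\sd$ --- partition $\irr(C_{P}(\sig))$. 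Hence every irreducible character of $C_{P}(\sig)$ is a constituent of a unique member of $\CX'$, which is (S2').

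It then remains to verify (S1'), i.e.\ that $|\CX'| = |\CY|$. For $\CY$ I would invoke \refp{sclnt}: the assignment $(\CD,\vphi) \mapsto \KD$ is a bijection from the set $\D$ of $\sig$-invariant basic pairs onto $\CY$, whence $|\CY| = |\D|$. For $\CX'$ I would use the last assertion of \refl{basic1}, namely that $\frob{\xd}{\xi_{\CD'}(\vphi')} \neq 0$ if and only if $(\CD,\vphi) = (\CD',\vphi')$; this forces the map $(\CD,\vphi) \mapsto \xd$ to be injective (two coinciding characters would have non-zero scalar product), hence a bijection from $\D$ onto $\CX'$, so that $|\CX'| = |\D| = |\CY|$. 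Combining this with the previous paragraph shows that $(\CX',\CY)$ satisfies (S1')--(S3') and is therefore a supercharacter theory for $C_{P}(\sig)$.

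I do not expect any genuine obstacle here: all the substantive work has already been carried out in \refp{basic2} (the $\xd$ are superclass functions, hence positive-integer multiples of the $\sd$) and in \reft{schth}. The one point that requires a little care is to argue with the relaxed axioms (S1')--(S3') rather than (S1)--(S3), since $\xd$ is in general only a multiple of $\sd = \sig_{X_{\CD}(\vphi)}$ and not equal to it; by \cite[Lemma~2.1]{DiaIsa} this nonetheless defines a bona fide supercharacter theory, which moreover has the same superclasses and induces the same partition of $\irr(C_{P}(\sig))$ as the theory of \reft{schth}.
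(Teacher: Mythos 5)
Your proposal is correct and follows essentially the same route as the paper, which states \reft{schth2} as an immediate consequence of \refp{basic2} and \reft{schth}: since each $\xd$ is a positive-integer multiple of the supercharacter $\sd$, the pair $(\CX',\CY)$ satisfies the relaxed axioms (S1')--(S3') recorded in the introduction. Your explicit verification of the three axioms (using \refp{sclnt} and \refl{basic1} for the cardinality count) simply fills in the details the paper leaves implicit.
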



\end{document}